\documentclass[12 pt]{amsart}
\usepackage{amssymb, amsmath, amsfonts, amsthm, graphics}
\usepackage[hmargin=1 in, vmargin = 1 in]{geometry}
\usepackage{tikz-cd}
\usepackage{hyperref}
\usepackage[all]{xy}
\usepackage{enumitem}
\usepackage{youngtab} % This is the package I use to draw Young Tableaux. Check out http://www.ctex.org/documents/packages/math/youngtab.pdf for documentation.
\usepackage{mathtools}
\usepackage[utf8]{inputenc} % allow utf-8 input
\usepackage[T1]{fontenc}    % use 8-bit T1 fonts
\usepackage{hyperref}
\usepackage[capitalise]{cleveref}% hyperlinks
\usepackage{url}            % simple URL typesetting
\usepackage{booktabs}       % professional-quality tables
\usepackage{amsfonts}       % blackboard math symbols
\usepackage{nicefrac}       % compact symbols for 1/2, etc.
\usepackage{microtype}      % microtypography
\usepackage{lipsum}		% Can be removed after putting your text content
\usepackage{graphicx}
\usepackage{natbib}
\usepackage{doi}
\usepackage{comment}

% When you first define a new word, use this macro to make it stand out
% EG We say that an abelian group $I$ is \newword{injective} if, for any
% injection $G \to H$, and any map $G \to I$, there is a map $H \to I$ making the
% obvious diagram commute.

%Arrows

%Makes small matrices, suitable for using in running text. Usually not a good idea to use in matrices with more than three rows.

% This makes quotients G\X typeset with spacing similar to X/G, which neither \backslash nor \setminus will do. Thanks to Philippe Goulet at https://tex.stackexchange.com/a/7110/5034 for this solution.

 %The isomorphism symbol. Somehow, I can never remember the LaTeX name for this.

%Theorem enviroments. Feel free to add more.
\theoremstyle{definition}

\newtheorem*{example}{Example}

\newtheorem{thm}{Theorem}[section]
%\numberwithin{thm}{subsection}
\newtheorem{cor}{Corollary}[thm]
\newtheorem{lem}{Lemma}[section]
\newtheorem{prop}{Proposition}[section]
\newtheorem{defn}{Definition}[section]

\newcommand{\Hom}{\text{Hom}}

%This makes alternating tensors look right in displayed equations

\newcommand{\mycomment}[1]{}

%Blackboard bold letters.

\newcommand{\CC}{\mathbb{C}}

\newcommand{\RR}{\mathbb{R}}

\newcommand{\ZZ}{\mathbb{Z}}

%Calligraphic letters

\title{Residue formulae for the  trace on affine Hecke algebras.}

\author{Paul Mammen}
\address{Department of Mathematics, University of Michigan, 530 Church Street, Ann Arbor, MI
48109-1043, USA}
\email{\href{mailto:pmammen@umich.edu}{pmammen@umich.edu}}
% Uncomment to remove the date
%\date{}

% Uncomment to override  the `A preprint' in the header
%\renewcommand{\headeright}{Technical Report}
%\renewcommand{\undertitle}{Technical Report}

\begin{document}

%%% Add PDF metadata to help others organize their library
%%% Once the PDF is generated, you can check the metadata with
%%% $ pdfinfo template.pdf

\maketitle
\begin{abstract}
	Motivated by recent advances in Catalan combinatorics, we study special values of the standard trace on affine Hecke algebras. Starting from a generating function for this trace calculated by Opdam, we use the theory of Szenes and Vergne to obtain residue formulae for the trace. This allows us to derive a product formula for the trace  of translation elements corresponding to weights in  certain ``Big Chambers'' of the positive root cone.
\end{abstract}
\setcounter{tocdepth}{1}
\tableofcontents

\section{Introduction}
\label{section1}
Given a Coxeter system $(W,S)$, one can associate to it a Hecke Algebra, $H_q(W,S)$ over $\ZZ[q,q^{-1}]$, where $q$ is an indeterminate. It has a natural basis $\{T_w\}_{w \in W}$ and we  can define the standard trace $$\tau: H_{q}(W,S) \to \ZZ[q,q^{-1}],\text{ } \tau(\sum_{w \in W}a_{w}T_{w}) = a_{e}.$$ Recent work (\cite{GLMW}, \cite{GL}) crucially uses the trace of Coxeter elements  in the Hecke algebra associated to in a \textit{finite} Coxeter group to derive new combinatorial formulae. In analogy with the finite case, Williams(\cite{wil}) proposed a framework in the case of the \textit{affine} Coxeter group, based on the trace of the associated (extended) affine Hecke algebra.

 In this paper, we build on the approach outlined in \cite{wil} and consider the trace of certain elements  in the affine Hecke algebra, associated to translation elements in the corresponding affine Weyl group. The trace on affine Hecke algebras have been intensively studied in relation to the representation theory of p-adic groups, starting with the work of Macdonald(\cite{Mac}), Opdam-Heckman(\cite{HO}) and in full generality by  Opdam(\cite{op1,op2}).  In \cite{op1}, Opdam found an interesting generating function for the trace of translation elements,  which is the starting point for the framework in \cite{wil}. Translation elements correspond to weights in the associated \textit{weight lattice}, and the generating function expresses the trace as a weighted sum over \textit{Kostant partitions}.

In the affine type $A$ case, Galashin, Lam and Williams (\cite{wil}) observed that the weighted sum from Opdam's formula matches a specialization of an entirely different weighted sum, first considered by Haglund (\cite{Ha}, \cite{AGH+}), over \textit{Tesler Matrices} with fixed hook sum $(b_1,\ldots,b_{n-1})$. The weighted sum of Tesler matrices with hook sum $(1,m,\ldots,m)$ can be interpreted as the Hilbert series of the $S_{n-1}$ module $DH_{n-1}^{m}$, which is a generalization of the ring of diagonal coinvariants. Haglund (\cite{Ha}) also showed that the specialization of the weighted sum for $(1,m,\ldots,m)$ had a particularly nice product formula. Building on the framework in \cite{wil},  Williams et. al. (\cite{EHKM+}) used the product formula to provide a new proof of Cayley's theorem on the number of trees in the complete graph. In \cite{AGH+}, a generalization of the product formula was conjectured for a weighted sum over Tesler matrices with positive hook sums $(b_1,\ldots,b_{n-1})$.

The identification between Kostant partitions and Tesler matrices follows from the reinterpretation of Tesler matrices as integer flows on the complete graph $K_{n}$ (\cite{MMR}). Building on this interpretation, Liu, Mészáros and  Morales (\cite{LMM}) proved  \cite[Conjecture 7.2]{AGH+}, in the context of $q$-Erhart sums for threshold graphs.

In this paper, we prove the following theorem, which is a generalization of \cite[ Conjecture 5.10]{wil} (The conjecture in \cite{wil} is a restatement of  \cite[Conjecture 7.2]{AGH+} in terms of the standard trace).
\begin{thm}
\label{thm1.1}

 Let $ \tilde{H} = H(\tilde{A}_{n-1})$ be the extended affine Hecke algebra.  Let  $\lambda = \sum_{i=1}^{n-1} a_i \alpha_{i}$, where 
 $\alpha_{i} = e_{i} - e_{i+1} $ denote the simple roots. Suppose 
         $$\label{eq1.1}a_1 > a_2 > \cdots > a_{n-m} < a_{n-m+1} \cdots < a_{n-1}.$$
    Then \[\tau(\theta_{-\lambda}) = q^{-\sum_{i=1}^{n-1}a_{i}}(q-1)^{2(n-1)}[a_{n-m}]_{q^n}\prod_{i=1}^{n-m-1}[(i+1)a_{i} - ia_{i+1}]_q\prod_{j=1}^{m-1}[(j+1)a_{n-j}-ja_{n-j-1}]_q,\]
    where $\theta_{-\lambda}$ is the basis element corresponding to the translation $t_\lambda \in \tilde{A}_{n-1}$ (\cref{def2.2}) and $[k]_{q} := \frac{q^{k}-1}{q-1}$ is the $k^{\text{th}}$ $q$-integer.
    
\end{thm}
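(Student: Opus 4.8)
The plan is to follow the route advertised in the abstract: begin with Opdam's generating function for $\tau(\theta_{-\lambda})$, convert it into an iterated residue via the theory of Szenes and Vergne, and then evaluate that residue in the ``Big Chamber'' cut out by the valley condition. Concretely, I would first recast the formula of \cite{op1} (in the form used in \cite{wil}) as a weighted vector partition function: writing $\lambda = \sum_{i<j} c_{ij}(e_i - e_j)$ over the positive roots of $A_{n-1}$, each Kostant partition $(c_{ij})$ carries a monomial weight in $q$, so that $\tau(\theta_{-\lambda})$ becomes the coefficient of $z^{-\lambda}$ in a rational function of the shape $\prod_{1 \le i < j \le n}\bigl(1 - w_{ij}\, z^{e_i - e_j}\bigr)^{-1}$, where $w_{ij}$ records the per-root $q$-weight and $z = (z_1,\dots,z_n)$ are coordinates subject to $z_1 \cdots z_n = 1$.

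Next I would apply the Szenes--Vergne machinery to express this coefficient as an iterated residue
\[
\tau(\theta_{-\lambda}) \;=\; \operatorname{Res}\; \frac{z^{-\lambda}}{\prod_{i<j}\bigl(1 - w_{ij}\,z^{e_i - e_j}\bigr)}\,\frac{dz_1}{z_1}\cdots \frac{dz_{n-1}}{z_{n-1}},
\]
where the crucial input is the choice of contours and ordering dictated by the chamber containing $\lambda$. The valley condition $a_1 > \cdots > a_{n-m} < \cdots < a_{n-1}$ is precisely the statement that $\lambda$ lies in a specific top-dimensional chamber of the root arrangement, and it is this combinatorial data that determines which poles of the integrand contribute. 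I expect that in this chamber only a very restricted family of nested sets (``flags'' of roots) survives, and that these flags split into a ``left'' part supported on the strictly decreasing indices $1,\dots,n-m$ and a ``right'' part supported on the strictly increasing indices $n-m,\dots,n-1$.

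With the contributing terms identified, the evaluation should factorize. Taking the residues associated to the decreasing chain one index at a time, I anticipate a telescoping of geometric-series contributions that produces the factor $[(i+1)a_i - i a_{i+1}]_q$ at step $i$; symmetrically, the increasing chain should produce $[(j+1)a_{n-j} - j a_{n-j-1}]_q$. The two chains meet at the global minimum $a_{n-m}$, where the highest (affine) root direction closes up the cycle and contributes the distinguished factor $[a_{n-m}]_{q^n}$, the base $q^n$ reflecting the extended and cyclic structure of $\tilde H$; the overall normalization $q^{-\sum_i a_i}(q-1)^{2(n-1)}$ then collects the per-root weights in Opdam's formula together with the residue normalizations.

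The hard part, I expect, will be the middle step: proving that in the Big Chamber the Szenes--Vergne residue collapses to the single product above rather than to a large alternating sum of competing flags, and pinning down exactly the linear forms $(i+1)a_i - i a_{i+1}$ that appear. This requires a careful analysis of which simplicial cones (bases of roots) are compatible with the chamber, together with a telescoping/cancellation argument showing that the surviving contributions assemble cleanly into a product. The positivity of all the linear forms $(i+1)a_i - i a_{i+1}$ and $(j+1)a_{n-j} - j a_{n-j-1}$, which follows directly from the valley inequalities, is a useful consistency check that the arguments of the $q$-integers are correct and that no poles have been misassigned.
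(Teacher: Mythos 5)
Your overall strategy is the one the paper follows --- Opdam's generating function, Szenes--Vergne residues, and a big-chamber analysis --- but the two steps you yourself flag as ``the hard part'' are precisely where your anticipated mechanism diverges from what actually happens, and as written the proposal cannot be completed. First, you never identify which poles contribute. In the paper these are not nested families of flags evaluated at the origin: they are Opdam's \emph{residual points} (\cref{thm2.3}), shifted points $p_w$ in the complexified torus indexed by permutations $w \in S_n$ (together with central twists), characterized by $e^{\alpha_{w(i)w(i+1)}(p_w)} = q$ for all $i$. Relatedly, the generating function is not of the shape $\prod_{i<j}(1-w_{ij}z^{e_i-e_j})^{-1}$ with monomial per-root weights: each positive root contributes $(1-z^{\alpha})^2\big/\big((1-qz^{\alpha})(1-q^{-1}z^{\alpha})\big)$, and the interplay between the numerator zeros and the doubled denominator is exactly what forces the order-of-vanishing condition ($i_p - z_p \geq n$ in the proof of \cref{thm3.1}) that singles out residual points; dropping the numerator loses the mechanism that tells you where to take residues at all.

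Second, your guiding expectation --- that in the big chamber the residue ``collapses to the single product above rather than to a large alternating sum,'' with each step of an iterated residue emitting one $q$-integer factor --- is the opposite of what occurs. Every residual point $p_w$ with $w \in W(\mathfrak{c},A_{n-1})$ contributes, and each contribution is a single signed monomial $(-1)^{\epsilon_n(w)}e^{-\lambda(p_w)}$ times one universal constant $\Omega(A_{n-1}) = (q-1)^n/(q^n-1)$ (\cref{thm4.2}); there are exponentially many such terms (the contributing set doubles with each rank). The product formula emerges only because the index set $W(\mathfrak{c},A_{n-1})$ itself has a product structure: it is described inductively by multiplication by longest elements, $W^1_{n-1} = W^1_{n-2} \cup W^1_{n-2}w_{n-1}$ (\cref{lem3.2}), and in general $W^m_{n-1} = (W^1_{n-m-1}\times W^{m-1}_{m-1}) \cup (W^1_{n-m-1}\times W^{m-1}_{m-1})w_{n-1}$, so that the alternating sum of monomials factors, pair by pair, into binomials such as $q^{-a_{n-m}}(q^{na_{n-m}}-1)$ --- this pairing $w \leftrightarrow ww_{n-1}$, not an affine-root heuristic, is where the base $q^n$ in $[a_{n-m}]_{q^n}$ comes from. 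These combinatorial lemmas, together with the separate proposition that the valley inequalities actually cut out a big chamber (which you assert but do not prove), constitute the real content of the paper's proof and are entirely missing from the proposal.
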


Our approach to proving \cref{thm1.1} is to analyze the trace generating function using the theory of Euler-MacLaurin sums, due to Szenes and Vergne (\cite{sv}).  The approach in \cite{sv} uses a multidimensional generalization of residues, the Jeffery-Kirwan residue, and allows us to write a  formula for coefficients that belong to a \textit{Big Chamber} of the cone generated by positive roots. Given a big chamber, we can write the coefficients as a sum over certain 
``residual points''.

The paper is structured as follows. We begin by summarizing the necessary definitions and results on the trace of the affine Hecke algebra from \cite{op1} and \cite{op2} in \cref{section2}.
In \cref{section3}, we will explain the approach outlined in \cite{sv}  with examples. The main theorem in \cite{sv}, expresses the coefficients of a generating function as sums of residues at certain poles.  In \cref{section4}, we put it all together and analyze Opdam's generating function. In type A, We show that the inequalities in \cref{thm1.1} define a big chamber. The poles that contribute to the sum of the residues are indexed by permutations, and we determine the permutations that contribute to the residue of the elements in  \cref{thm1.1}. We describe the permutations inductively and show that they correspond to products of the longest elements in the Weyl group.

\subsection*{Acknowledgements}
I thank my advisor, Thomas Lam, for his encouragement and comments. I thank Dawei Shen, Mia Smith, Amanda Schwartz and Calvin Yost-Wolff for useful discussions.

\section{Preliminaries on Affine Hecke Algebras}
\label{section2}
We will introduce the extended affine Hecke algebra, following \cite{op1}.
Let $\Phi \subset V^*$ be an irreducible root system of a Weyl group $W(\Phi) = W$, where we assume that $\Phi$ spans $V^*$. We have the associated set of co-roots $\Phi^{\vee} \subset V$, with a  bijection $\alpha \to \alpha^{\vee}$, where $\langle \alpha, \alpha^{\vee}\rangle = 2$, and $\langle \cdot, \cdot \rangle$ denotes the natural pairing between $V^*$ and $V$.

We will fix a simple system $\Delta \subset \Phi$, and define a partial order on $V^*$, where $\lambda > \mu \iff \lambda - \mu \in \bigoplus_{\alpha \in \Delta} \RR^+\alpha $. With respect to this partial order, we have a decomposition into $\Phi = \Phi^+ \cup \Phi^-$, where $\Phi^{\pm} := \{ \alpha \in \Phi | \pm \alpha > 0\} $.  With respect to this  partial order, we also have the longest root $\alpha_{0} \in \Phi^+$. The Weyl group $W$ is generated by the collection $S$ of simple reflections $s_\alpha$, associated to each $\alpha \in \Delta$, which  act on $V^*$ by \[s_{\alpha}(x) = x -\langle x,\alpha^{\vee}\rangle \alpha.\]
The rank of $\Phi$ is defined to be the cardinality of $\Delta$. It is also equal to the dimension of the vector space spanned by $\Phi$,  which in this case is the dimension of  $V^*$.

We have the coroot lattice $Q^{\vee} := \ZZ \Phi^{\vee} \subset V$ and the associated weight lattice \[P := \{\lambda \in V^*| \langle \lambda,\alpha^\vee \rangle \in \ZZ \text{ for all } \alpha^\vee \in \Phi^{\vee} \},\] and a perfect pairing between the two inherited from the pairing between $V$ and $V^{*}$. Inside $P$, we also have the root lattice $Q := \ZZ \Phi$. Both $P$ and $Q$ are preserved by $W$.
The weight lattice $P$ acts on $V^*$ by translation and for an element $\lambda \in P$, we will denote the associated translation by $t_{\lambda}$.  We define the extended affine Weyl group $\tilde{W} :=W \ltimes
P$ inside the group of affine linear transformations on $V^*$. We also define the affine Weyl group $W_{\textit{aff}} := W \ltimes Q$. The group $W_{\textit{aff}}$ is an affine Coxeter group and is generated by the generators $s_\alpha \in S$ of the finite Weyl group $W$, along with $s_0 = s_{\alpha_0}t_{-\alpha_0}$. We will call this set of generators $S_{0}$.  

\begin{defn}
 For an element $x = wt_{\lambda} \in W \ltimes P $ we define its length \[\ell(wt_{\lambda}) := \sum_{\alpha \in \Phi^+} |\langle \lambda, \alpha^{\vee} \rangle + \chi_{>0}(w(\alpha))|, \]  where, $\chi_{>0}(v) = 1$ if $v > 0 $ in the partial order on $V^*$ and $0$ otherwise.   
\end{defn}
 
\begin{defn}
\label{def2.2}
    The extended affine Hecke Algebra $H(\tilde{W})$ is the algebra over $\ZZ[q,q^{
    -1}]$ generated by elements $T_x$, $x \in \tilde{W}$, subject to the relations 
    \begin{enumerate}
        \item $T_{x}T_{x'} = T_{xx'} \text{ if } \ell(xx') = \ell(x) + \ell(x')$, and
        \item $T_{s}^2  = (q-1)T_s + 1  \text{ for } s \in S_0 $.
    \end{enumerate}
\end{defn}

\begin{defn}
    The trace form $\tau: H \to \ZZ[q,q^{-1}]$ is the $\ZZ[q,q^{-1}]$-linear map,  where $\tau(T_w) = 1$, if $w $ is the identity
and $0$ otherwise.
\end{defn}

We have another presentation of the extended affine Hecke algebra $H$ with generators  $T_s$,  $s \in S$  and $\theta_ {\lambda}$ for $\lambda \in P$. This presentation is also popularly called the Bernstein presentation.
Before we state this description, we define the set of dominant weights
\[P^+ := \{ \lambda \in P | \langle\lambda, \alpha^{\vee} \rangle > 0, \text{ for all } \alpha \in \Delta\} \subset P. \]
We define the positive root cone $Q^+= \{v |  v = \sum_{\alpha \in \Delta} m_{\alpha}\alpha, \text{} m_{\alpha} \in \ZZ_{\geq 0}, \textrm{ for all } \alpha \in \Delta\}\subset Q$.

\begin{defn}\label{def2.2}
    For $\lambda \in P$, if $\lambda = \lambda_+ - \lambda_{-}$, where $\lambda_{\pm} \in P^+$,  we  define \[\theta_{\lambda} :=  q^{\ell(t_{\lambda_{-}})-\ell(t_{\lambda_{+}})/2}T_{\lambda_+}T_{\lambda_-}^{-1}.\]
 $\theta_\lambda$ does not depend on the decomposition of $\lambda$ as a difference of dominant weights.    
\end{defn}

\begin{thm}
       The algebra $H$ is generated by elements  $T_s$, $s \in S$  and $\theta_ {\lambda}$, $\lambda \in P$ subject to the relations
       \begin{enumerate}
           \item $T_{s}^2  = (q-1)T_s + 1  \text{ for } s \in S $,
           \item $\theta_{\lambda}\theta_{\mu} = \theta_{\lambda+\mu}$ for $\lambda,\mu \in P$,
           \item $T_s\theta_{\lambda}- \theta_{\lambda}T_s = (q-1)\frac{\theta_{\lambda}-\theta_{s(\lambda)}}{1-\theta_{-\alpha}}$.
       \end{enumerate}
    Thus, $A := \ZZ[q,q^{-1}](\theta_{\lambda})_{\lambda \in P}$ is  a commutative subalgebra of $H$ isomorphic to the group algebra of $P$.   
\end{thm}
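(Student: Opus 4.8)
The plan is to recognize this as the \emph{Bernstein presentation} of $H$ and to proceed in two stages: first verify that relations (1)--(3) hold among the images of $T_s$ and $\theta_\lambda$ in $H$, and then prove that these relations are \emph{complete}, i.e.\ that the abstract algebra $\widetilde H$ presented by $T_s\ (s\in S)$ and $\theta_\lambda\ (\lambda\in P)$ modulo (1)--(3) is isomorphic to $H$. The foundation is the additivity of length for dominant translations: if $\lambda,\mu\in P^+$ then $\ell(t_{\lambda+\mu})=\ell(t_\lambda)+\ell(t_\mu)$, which one reads off the length function $\ell$. By the defining relation $T_xT_{x'}=T_{xx'}$ (valid when $\ell(xx')=\ell(x)+\ell(x')$) this gives $T_{t_\lambda}T_{t_\mu}=T_{t_{\lambda+\mu}}=T_{t_\mu}T_{t_\lambda}$, so the dominant translation operators commute and multiply additively. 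First I would use this to check that $\theta_\lambda$ is independent of the decomposition $\lambda=\lambda_+-\lambda_-$: two such decompositions satisfy $\lambda_++\mu_-=\mu_++\lambda_-$ in $P^+$, and additivity together with the normalizing powers of $q$ forces the two candidate expressions to agree. Relation (2) and the commutativity of $A$ then fall out of the same additivity, by commuting the dominant factors past one another in the product $\theta_\lambda\theta_\mu$.

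The substance is relation (3). Rewriting it as $T_s\theta_\lambda=\theta_{s\lambda}T_s+(q-1)D_\alpha(\theta_\lambda)$ with $D_\alpha(f):=(f-s_\alpha f)/(1-\theta_{-\alpha})$, one first checks (using $s\lambda=\lambda-\langle\lambda,\alpha^\vee\rangle\alpha$ and a telescoping sum) that $D_\alpha(\theta_\lambda)$ is an honest element of $A$, with no surviving denominator. The key structural observation is that $D_\alpha$ obeys the twisted Leibniz rule $D_\alpha(fg)=D_\alpha(f)\,g+(s_\alpha f)\,D_\alpha(g)$; consequently the set $\cS=\{f\in A: T_sf=s_\alpha(f)\,T_s+(q-1)D_\alpha(f)\}$ is a subalgebra of $A$, and a short computation shows it is stable under inverting its invertible elements. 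Since every weight is a difference of two dominant weights, it therefore suffices to verify relation (3) for $\lambda\in P^+$. I would settle this base case by a direct computation in $H$, splitting on $\langle\lambda,\alpha^\vee\rangle$: when it is $0$ the right-hand side vanishes and one shows $T_s$ commutes with $T_{t_\lambda}$ using $st_\lambda=t_\lambda s$ and the length comparison $\ell(st_\lambda)=\ell(t_\lambda)+1=\ell(t_\lambda s)$, while for $\langle\lambda,\alpha^\vee\rangle>0$ one extracts the relation from the quadratic relation $T_s^2=(q-1)T_s+1$ after writing a reduced expression for $t_\lambda$ that begins with $s$.

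Finally comes completeness. The images of $T_s$ and $\theta_\lambda$ satisfy (1)--(3) and generate $H$, giving a surjection $\widetilde H\onto H$. Using (1)--(3) as rewriting rules -- relation (3) is precisely what lets one move every $T_s$ to the right past the $\theta$'s -- one shows that the monomials $\{\theta_\lambda T_w:\lambda\in P,\ w\in W\}$ span $\widetilde H$ over $\ZZ[q,q^{-1}]$. On the $H$ side these same monomials are known to form a basis (the Bernstein--Lusztig basis, obtained by triangularity against $\{T_x\}_{x\in\tilde W}$), so the surjection is an isomorphism and, in particular, the $\theta_\lambda$ are linearly independent. Linear independence of $\{\theta_\lambda\}_{\lambda\in P}$ together with relation (2) is exactly the assertion that $A\iso\ZZ[q,q^{-1}][P]$.

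I expect the genuine obstacle to be this last step: certifying that there are \emph{no} further relations, rather than merely producing a spanning set. The cleanest route is to exhibit a faithful module -- the polynomial representation of $H$ on $\ZZ[q,q^{-1}][P]$ in which $\theta_\lambda$ acts by multiplication and each $T_s$ acts by the corresponding Demazure--Lusztig operator -- since a faithful action on a free module of the expected rank automatically forces the spanning monomials to be linearly independent, thereby upgrading the surjection $\widetilde H\onto H$ to an isomorphism and completing the proof.
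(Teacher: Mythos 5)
The first thing you should know is that the paper contains no proof of this statement: it is the classical Bernstein--Lusztig presentation, quoted as background (following Opdam \cite{op1}), so there is no internal argument to compare yours against; your proposal has to stand on its own. On its own merits, your outline is the standard proof and its skeleton is sound: additivity of $\ell$ on dominant translations gives well-definedness of $\theta_\lambda$, relation (2), and commutativity of $A$; and the twisted Leibniz rule $D_\alpha(fg)=D_\alpha(f)\,g+(s_\alpha f)\,D_\alpha(g)$ makes $\cS=\{f\in A:\ T_sf=s_\alpha(f)\,T_s+(q-1)D_\alpha(f)\}$ a subalgebra stable under inverting invertible elements, so (3) need only be checked on dominant weights. (You have also, silently and correctly, repaired the relation as printed in the paper: the left-hand side must be $T_s\theta_\lambda-\theta_{s(\lambda)}T_s$, not the commutator $T_s\theta_\lambda-\theta_\lambda T_s$; in rank one with $\lambda$ the fundamental weight the two versions are incompatible with the conclusion that $A$ is the group algebra of $P$.) One step you should make explicit: for dominant $\lambda$ with $\langle\lambda,\alpha^\vee\rangle\ge 2$, ``extracting the relation from the quadratic relation'' is not a routine computation; but your own machinery makes it unnecessary, since $P$ is generated by the fundamental weights $\omega_i$, which are dominant and satisfy $\langle\omega_i,\alpha_j^\vee\rangle=\delta_{ij}\in\{0,1\}$, so putting the $\theta_{\omega_i}$ in $\cS$ already forces $\cS=A$. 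Only the two cases you can genuinely do by hand ever need doing.

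The one genuine gap is in your closing paragraph. Linear independence of the Demazure--Lusztig operators shows that the monomials $\theta_\lambda T_w$ are linearly independent in the abstract algebra $\widetilde H$, hence form a basis there; but this does not ``automatically'' upgrade the surjection $\widetilde H\onto H$ to an isomorphism. Over any nonzero ring a surjection between free modules of the same infinite rank can have nonzero kernel (shift a countable basis), so matching ranks proves nothing; and the polynomial representation cannot be invoked as a faithful representation of $H$ itself without circularity, because defining an action of $H$ (presented by the $T_x$) through those formulas, or proving such an action faithful, already presupposes the Bernstein basis of $H$. What actually closes the argument is exactly the statement you relegate to ``known'': that the images $\theta_\lambda T_w$ are linearly independent \emph{inside} $H$, proved by triangularity against the Iwahori--Matsumoto basis $\{T_x\}_{x\in\tilde W}$ (the leading term of $\theta_\lambda T_w$ is a unit times $T_{t_\lambda w}$, with lower-order corrections). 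That triangularity is therefore not an optional alternative but the indispensable final step; with it in place your argument is complete, and the faithfulness of the polynomial representation becomes a corollary of the presentation rather than an input to it.
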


\subsection{Trace formula and residual points}
We will assume $q > 1$ is a real number and set $k := \log(q)$.

\begin{defn}
     Let $T := \Hom(P, \CC^{*})$.  Define the rational function $\eta(t)$ on $T$ by  \[ \eta(t) := 
     %\prod_{\alpha \in \Phi^{+}} \frac{(1-t(\alpha))(1-t(-\alpha))}{q(1-q^{-1}t(\alpha))(1-q^{-1}t(-\alpha))} = %
     \prod_{\alpha \in \Phi^{+}} \frac{(1-t(\alpha))^2}{(1-q^{-1}t(\alpha))(1-qt(\alpha))},\]
     where $t \in T$.
\end{defn}

 We will be interested in a power series expansion of $\eta(t)$, which we will denote by $r^+(\eta(t))$. If we define $r^+(\frac{1}{1-x}) := \sum_{i=0}^\infty x^i$ and extend multiplicatively,  we can expand $\eta(t)$  as a power series in the $t_i = t(\alpha_i)$, where $\alpha_{1},\cdots,\alpha_{n}$ are the simple roots in $\Delta$,   
 \begin{equation*}
     \begin{split}
         r^+(\eta(t)) &= \prod_{\alpha \in \Phi^+}[(1-t(\alpha))^2(\sum_{i=0}^{\infty} q^it(\alpha)^{i})(\sum_{j=0}^{\infty}q^{-j}t(\alpha)^j)].
     \end{split}
 \end{equation*}

\begin{thm}[{\cite[Theorem 1.14]{op1}}]
\label{thm2.2}\

    We have the following equality of formal Laurent series \[\sum_{\lambda \in P} \tau(\theta_{-\lambda})t(\lambda) = r^+(\eta(t)).\]

In particular, if $\lambda$ is not in $Q^+$, then $\tau(\theta_{\lambda})$ vanishes.
\end{thm}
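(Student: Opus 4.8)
The plan is to reduce the identity to an understanding of the restriction $\tau|_A$ of the trace to the commutative subalgebra $A\cong\ZZ[q,q^{-1}][P]$, and then to identify the resulting generating function with the $c$-function product $\eta$. Two elementary inputs come first. Since $\tau$ is a symmetric trace, $\tau(hh')=\tau(h'h)$, and since $\theta_{-\lambda}$ equals, up to an explicit power of $q$, the single basis element $T_{t_\lambda}^{-1}$ when $\lambda$ is dominant, the number $\tau(\theta_{-\lambda})$ is, for such $\lambda$, exactly the coefficient of $T_e$ in $T_{t_\lambda}^{-1}$. This coefficient is governed by the inverse $R$-polynomial combinatorics of $\tilde W$; a short check (e.g. the length-$0$ part $\Omega=P/Q$ forces the $T_e$-coefficient to vanish whenever $\lambda\notin Q$) already shows $\tau(\theta_{-\lambda})$ is supported on $\lambda\in Q^+$. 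The real work is to produce a closed form valid for all $\lambda\in Q^+$, not merely the dominant ones, and to reassemble the generating series.

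For the closed form I would pass to the spectral side. For $t\in T$ let $\pi_t=\operatorname{Ind}_A^H(\CC_t)$; as a vector space it is $\CC[W]$, the subalgebra $A$ acts with eigenvalues $(wt)(\mu)$, and hence $\operatorname{tr}\pi_t(\theta_\mu)=\sum_{w\in W}(wt)(\mu)$ is a Weyl-orbit sum. The principal-series contribution to $\tau$ is integration of $\operatorname{tr}\pi_t(a)$ over the compact torus $T_u$ against the $W$-invariant spherical Plancherel density $d\mu_{\mathrm{Pl}}\propto (c(t)c(t^{-1}))^{-1}\,dt$, with $c$ the Gindikin--Karpelevich $c$-function; the remaining tempered representations enter only as residues collected when the contour is deformed, which is precisely the residue phenomenon exploited later in the paper. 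Using $W$-invariance of the density to collapse the orbit sum gives $\tau(\theta_{-\lambda})=\int_{T_u}t(-\lambda)\,d\tilde\mu(t)$ with $d\tilde\mu\propto (c(t)c(t^{-1}))^{-1}dt$, and rewriting each factor $1-q^{-1}t(\alpha)^{\pm1}$ in terms of $1-t(\alpha)$, $1-qt(\alpha)$, $1-q^{-1}t(\alpha)$ yields the key identity $(c(t)c(t^{-1}))^{-1}=q^{\lvert\Phi^+\rvert}\eta(t)$ in a suitable normalization of $c$. Thus the trace density is exactly $\eta$ up to the overall constant $q^{\lvert\Phi^+\rvert}$.

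Finally I would assemble the series: interchanging the lattice sum and the integral, $\sum_{\lambda\in P}\tau(\theta_{-\lambda})t_0(\lambda)=\int_{T_u}\bigl(\sum_{\lambda\in P}(t_0t^{-1})(\lambda)\bigr)\eta(t)\,d't$, and the inner sum is a Dirac comb on $T$ that localizes the integral at $t=t_0$, producing $\eta(t_0)$ up to normalization, which the term $\tau(\theta_0)=1$ fixes. The one genuinely delicate point, which I expect to be the main obstacle, is to recover not merely the rational function $\eta$ but precisely the expansion $r^+(\eta)$: the symmetric function $\eta$ admits several Laurent expansions, and one must verify that the one singled out by the trace is the geometric expansion in nonnegative powers of the $t(\alpha_i)$. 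This is pinned down by the convergence region of the integral (equivalently the choice of contour, and which residues are or are not crossed), matched against the boundary data from the dominant-weight computation of the first paragraph. Once it is established, the support statement is immediate, since $r^+(\eta)$ involves only nonnegative powers of each $t(\alpha_i)$, so $\tau(\theta_{-\lambda})$ vanishes for $\lambda\notin Q^+$. A fully self-contained variant avoids the Plancherel theorem altogether: compute $\tau(T_{t_\lambda}^{-1})$ for dominant $\lambda$ by the $R$-polynomial expansion, reassemble the product via a Macdonald constant-term identity, and use the Bernstein relations to pass from dominant $\lambda$ to all of $Q^+$; there the difficulty migrates into the Weyl-symmetrization bookkeeping.
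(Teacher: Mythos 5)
The paper does not prove this statement at all: it is imported verbatim from Opdam (\cite[Theorem 1.14]{op1}) and used as a black box, so there is no in-paper argument to compare yours against; I can only evaluate your sketch on its own terms. Several of your ingredients are sound. For dominant $\lambda$ one indeed has $\theta_{-\lambda}=q^{\ell(t_\lambda)/2}T_{t_\lambda}^{-1}$, your argument via the length-zero subgroup $\Omega\cong P/Q$ correctly restricts the support of $\tau(\theta_{-\lambda})$ to $\lambda\in Q$, and the identity $(c(t)c(t^{-1}))^{-1}=q^{|\Phi^+|}\eta(t)$ is a correct computation (each pair of factors for $\pm\alpha$ contributes one factor of $q$).

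However, the central step of your argument is broken, not merely incomplete. The formula $\tau(\theta_{-\lambda})=\int_{T_u}t(-\lambda)\,d\tilde\mu(t)$ with $d\tilde\mu\propto(c(t)c(t^{-1}))^{-1}dt$ is false as written: the spectral resolution of $\tau$ for an affine Hecke algebra contains, besides the principal-series integral over the compact torus, discrete contributions from residual points and cosets --- establishing and controlling exactly those terms is the content of \cite{op2}, and they are what the present paper computes with; they cannot be dropped. Concretely, for $q>1$ the function $\eta$ is smooth on $T_u$, so $\int_{T_u}t(-\lambda)\eta(t)\,dt$ computes the coefficients of the Laurent expansion of $\eta$ valid in the annulus containing $T_u$, in which each factor $\bigl(1-qt(\alpha)\bigr)^{-1}$ must be expanded in \emph{negative} powers of $t(\alpha)$. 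That expansion is not $r^+(\eta)$: it has nonzero coefficients at $\lambda\notin Q^+$, so your formula would contradict the very support statement you are trying to prove. The discrepancy between the $T_u$-expansion and $r^+(\eta)$ is precisely the sum of residues crossed when the contour is moved from $T_u$ to $cT_u$ with $c$ deep in the chamber where $r^+$ converges --- i.e., precisely the terms you wave away with ``enter only as residues.'' (Your Dirac-comb interchange has the same defect: the series $\sum_\lambda\tau(\theta_{-\lambda})t_0(\lambda)$ diverges for $t_0\in T_u$, since the coefficients grow exponentially, so the localization is purely formal.) You do flag the expansion-identification issue as ``the main obstacle,'' but it is not a finishing detail to be pinned down afterwards: resolving it \emph{is} the theorem, and everything else in your sketch is routine by comparison. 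If you want a self-contained proof, the more promising route is the algebraic one you mention only in passing: since $\tau$ is a trace, relation (3) of the Bernstein presentation forces $\tau\bigl(\tfrac{\theta_\lambda-\theta_{s(\lambda)}}{1-\theta_{-\alpha}}\bigr)=0$ for every $\lambda\in P$ and every simple reflection $s$; these difference equations, together with the normalization $\tau(\theta_0)=1$, support in $Q$, and your dominant-weight computation, are the kind of data that can characterize the generating series, after which one verifies that $r^+(\eta)$ satisfies the same equations. That argument would need to be carried out in full, not gestured at.
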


\begin{example}
    We can use  \cref{thm2.2} to  verify \cref{thm1.1} for $n = 2$.  In this case, the root system is $\Phi = A_1$.  We have a single positive root $\Phi^+ = \{\alpha_1\} = \Delta$.  We compute
          \begin{equation*}
         \begin{split}
         r^+(\eta(t)) &= \frac{(1-t(\alpha_1))^2}{(1-q^{-1}t(\alpha_1))(1-qt(\alpha_1))} \\
         &= (1 - 2t_1 + t_1^2)(\sum_{i=0} q^{i}t_1^i)(\sum_{j=0} q^{-j}t_1^j)\\
         &= (1 - 2t_1 + t_1^2)(\sum_{r=0}(\sum_{i=0}^{r}q^{2i-r})t_1^r)\\
         &= (1 - 2t_1 + t_1^2)(\sum_{r=0}\frac{q^{r+2}-q^{-r}}{(q^2-1)}t_1^r)\\
         &= \sum_{r=0}\frac{q^{r+2}-q^{-r} - 2(q^{r+1}-q^{-r+1})+ q^{r}-q^{-r+2}}{q^2-1}t_1^r\\
         &= \sum_{r=0}q^{-r}(q-1)^2\frac{q^{2r}-1}{q^2-1}t_1^r.
         \end{split}
     \end{equation*}  
Therefore, we have,    
      \[ \tau(\theta_{-r\alpha}) = q^{-r}(q-1)^2[r]_{q^2}. \]
      We define $\textrm{wt}(r) :=  q^{-r}(q-1)^2[r]_{q^2}$.
\end{example}

\begin{defn}
\label{def2.6}
For $\lambda \in Q^{+}$, a \textit{Kostant partition} is a tuple of non-negative integers $(r_{\alpha})_{\alpha \in \Phi^{+}}$ such that $\sum_{\alpha \in \Phi^{+}} r_{\alpha}\alpha = \lambda$.  The set of all such Kostant partitions will be denoted by $K(\lambda)$.
    
\end{defn}

We have the following Corollary of  \cref{thm2.2}.
\begin{cor}
\label{cor2.1.1}
    For $\lambda  \in P$, $\tau(\theta_{-\lambda}) = 0$, unless $ \lambda \in Q^{+}$, in which case \[
    \tau(\theta_{-\lambda}) = \sum_{(r_{\alpha}) \in K(\lambda)} \prod_{\alpha \in \Phi^{+}}\textrm{wt}(r_{\alpha}).
    \]
\end{cor}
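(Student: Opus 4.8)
The plan is to read the corollary off directly from \cref{thm2.2} by factoring the generating function $r^+(\eta(t))$ over the positive roots and matching the coefficient of $t(\lambda)$ against the definition of a Kostant partition. The key realization is that the computation performed in the $A_1$ example is entirely local to a single root and did not use that the root was simple.

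First I would record the single-factor expansion. Expanding one factor
\[
\frac{(1-t(\alpha))^2}{(1-q^{-1}t(\alpha))(1-qt(\alpha))}
\]
as a power series in the single quantity $t(\alpha)$ produces exactly $\sum_{r=0}^{\infty}\textrm{wt}(r)\,t(\alpha)^{r}$, where $\textrm{wt}(r) = q^{-r}(q-1)^2[r]_{q^2}$; this is precisely the calculation carried out in the example. Since that calculation only involved the one variable $t(\alpha)$, the same expansion is valid for every $\alpha \in \Phi^+$.

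Next I would multiply these expansions together. Because $r^+$ is defined by expanding each factor $1/(1-x)$ and extending multiplicatively, we obtain
\[
r^+(\eta(t)) = \prod_{\alpha \in \Phi^+}\left(\sum_{r_\alpha=0}^{\infty}\textrm{wt}(r_\alpha)\,t(\alpha)^{r_\alpha}\right) = \sum_{(r_\alpha)}\left(\prod_{\alpha \in \Phi^+}\textrm{wt}(r_\alpha)\right) t\!\left(\sum_{\alpha \in \Phi^+} r_\alpha\alpha\right),
\]
where the outer sum ranges over all tuples $(r_\alpha)_{\alpha \in \Phi^+}$ of nonnegative integers, and I have used that $t \in T = \Hom(P,\CC^*)$ is a homomorphism, so that $\prod_{\alpha} t(\alpha)^{r_\alpha} = t\big(\sum_\alpha r_\alpha\alpha\big)$.

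Finally I would extract the coefficient of $t(\lambda)$. By \cref{thm2.2} this coefficient equals $\tau(\theta_{-\lambda})$. On the right-hand side, a tuple $(r_\alpha)$ contributes to the coefficient of $t(\lambda)$ exactly when $\sum_{\alpha \in \Phi^+} r_\alpha\alpha = \lambda$, which by \cref{def2.6} is precisely the condition $(r_\alpha) \in K(\lambda)$; this yields the stated product formula. When $\lambda \notin Q^+$, every monomial $t(\sum_\alpha r_\alpha\alpha)$ lies in $Q^+$, so no term can contribute and $\tau(\theta_{-\lambda}) = 0$ (equivalently, $K(\lambda)$ is empty and the sum is vacuous). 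Since all of the analytic content is already contained in the example, I expect no genuine obstacle; the only point deserving a word of care is the interchange of the multiplicative expansion with the extraction of coefficients, which is legitimate because $r^+(\eta(t))$ is a well-defined formal power series in the $t_i = t(\alpha_i)$.
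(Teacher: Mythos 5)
Your proof is correct and coincides with the paper's own (implicit) argument: the paper states this corollary without proof, as an immediate consequence of \cref{thm2.2}, obtained precisely by the factor-by-factor expansion over $\Phi^+$ and coefficient matching that you carry out. The only point to make explicit is the convention $\textrm{wt}(0)=1$ (which the paper adopts for $\textrm{wt}_{q,t}$ in its Tesler-matrix discussion), since the closed formula $q^{-r}(q-1)^2[r]_{q^2}$ evaluates to $0$ at $r=0$ while the constant term of each factor's expansion is $1$; with that convention, your argument is exactly the intended one.
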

Following \cite{op2}, we define the concept of a residual point. Recall that we have fixed $q>1$. 
 \begin{defn}
     $t \in T$ is a \emph{residual point} if $ \# \{ \alpha \in \Phi| t(\alpha) = 1\} -  \# \{\alpha \in \Phi | t(\alpha) = q \} \geq \text{rank}(\Phi)$ 
\end{defn}

We can write $\CC^* = \RR^+S^1$, where $\RR^+$ is  the multiplicative group of positive real numbers and $S_1$ is the unit circle,  via the polar decomposition $\gamma = re^{i\theta}$. This allows us to decompose $T$ into a product of groups.
$T_u = \Hom(P, S^1)$ and $T_{rs} = \Hom(P,\RR^+)$.  Any element $t \in T$ can be written as $e^{m + in}$ where $m,n \in  \RR \otimes Q^{\vee}$. Given $t \in T$, any $p \in \CC \otimes Q^{\vee}$ such that $t_p = e^{p} = t$, is in a unique class $[p]$ modulo $2\pi i Q^{\vee}$.

Note that since $\eta$  is $W$ invariant, all the points in the $W$-orbit of a residual point will also be a residual point. 

 Let $G$ be the semisimple group over the complex numbers associated to the root datum $(P,Q^{\vee},\langle \cdot , \cdot \rangle, \Phi,\Phi^\vee)$. It is the simply connected complex Lie group of the same Dynkin type as $\Phi$, and $T$ can be identified with the maximal torus of this Lie group.  We have the associated Lie algebra $\mathfrak{g} = {\rm Lie}(G)$.

We have the following theorem.
\begin{thm}\label{thm2.3}\cite[Proposition B.2]{op2}
    Let $T = T_{rs}T_u$. Then each $W$-orbit of residual points contains a point $t = cs$, with $s \in T_u$ and $c \in T_{rs}$, such that
    \begin{enumerate}
        \item $\Phi_{s} = \{\alpha| s(\alpha) = 1\}$ is a root subsystem of the same rank as $\Phi$. For such points $s \in T \subset G$, the
centralizer $C_{\frak{g}}(s)$ of $s$ in $\mathfrak{g}$ is a semi-simple Lie algebra and $\Phi_s$ is the root system of $C_{\frak{g}}(s)$.  The points $s$ are in $1-1$ correspondence with vertices of the associated extended Dynkin diagram and $\Phi_s$ is the root system of the diagram obtained by deleting the corresponding vertex from the extended Dynkin diagram.
        \item $c \in T_{rs} = \Hom(P,\RR^+)$ is of the form $c = \exp(\frac{kh}{2})$,  where $h \in Q^\vee$ is an element such that the Dynkin diagram of $\Phi_s$ labeled by $\{\alpha(h)\}_{\alpha \in \Delta}$ is a weighted Dynkin diagram (Cf. \cite{car})  of $\Phi_{s}$ corresponding to a distinguished nilpotent orbit of $C_{\frak{g}}(s)$. 

Additionally, $t = cs$ , where $s$ and $c$ satisfy the above properties, is a residual point.
    \end{enumerate}

\subsection{Link with Tesler Matrices}.
We briefly discuss the link with Tesler Matrices. Let us specialize to the case of the root system $\Phi_{n} = A_{n-1} \subset V^*$, where $V^* \subset \CC^n$ is the subspace of all vectors whose coordinates add up to zero. We choose $\Phi^{+}_n = \{\alpha_{i,j} = e_j - e_i | 1 \leq j <i \leq n\}$. The simple roots are $\alpha_{i} = \alpha_{i,i+1}$ for $1 \leq i <n$. The root lattice $Q(A_{n-1})$ is the set of all vectors with integer entries adding up to zero.  
\begin{defn}
     Given a matrix $M = [m_{ij}]$, the $i
^{th}$ hook sum $b_i$ is $$b_i := \sum_{j\geq i} m_{ij} - \sum_{j < i}m_{ji}.$$ 
To an  $r\times r$ matrix, we can associate a hook sum vector $(b_1,\ldots,b_{r})$.
 \end{defn} Let $U_{r}$ denote the set of $r\times r$ upper triangular matrices with non-negative integer entries.

\begin{defn}
    Fix a vector $(b_{1},\ldots,b_{n-1}) \in \ZZ^{n-1}$.
       \[\mathcal{T}(b_{1},\cdots,b_{n-1}) := \{M | M \in U_{n-1}\text{, with hook sum vector } (b_{1},\ldots,b_{n-1})\}.\]
       An element of $\mathcal{T}(b_{1},\cdots,b_{n-1})$ is called a Tesler Matrix with hook sum $(b_{1},\cdots,b_{n-1})$.
\end{defn}

\begin{defn}
    For variables $q,t$ we define the weight function $\textrm{wt}_{q,t}$ as follows.

    For $m \in \ZZ_{> 0}, \textrm{wt}_{q,t}(m) = -(1-q)(1-t)\frac{q^m-t^m}{q-t}$ and $\textrm{wt}_{q,t}(0) = 1$.

    For a $r \times r$  matrix $M = [m_{ij}]$, $\textrm{wt}_{q,t}(M) =  \prod_{1 \leq i,j \leq r} \textrm{wt}_{q,t}(m_{ij})$.

\end{defn}

In \cite{Ha} and \cite{AGH+}, of special interest is the sum\[
P_{(b_1,\ldots,b_{n-1})}(q,t) = \frac{\sum_{M \in T(b_{1}\ldots,b_{n-1})} \textrm{wt}_{q,t}(M)}{(-(1-q)(1-t))^{n-1}}.
\]

Given  $(b_{1},\ldots,b_{n-1}) \in \ZZ^{n-1}$, we can associate to it  
$$ \lambda(b_{1},\cdots,b_{n-1}) = \sum_{i =1}^{n-1} b_{i} \alpha_{i,n} \in Q(A_{n-1}). $$
Consequently, a Tesler matrix $M \in T(b_{1},\ldots,b_{n-1})$ can be associated with the Kostant partition $(r_{\alpha_{i,j}}) \in K(\lambda(b_{1},\cdots,b_{n-1}))$, where $r_{\alpha_{i,j}} = m_{ij}$, if $i \neq j$ and $r_{\alpha_{i,n}} = m_{i,i}$. In fact, the map is a bijection between the two sets giving a natural translation between the two.  

In  \cite{MMR} and \cite{LMM}, the objects of interest are flows on the complete graph $K_n$ with edge set $E(K_n) = \{(i,j)| 1 \leq i,j \leq n\}$. A flow with net  flow vector $(b_{1},\cdots,b_{n})$ is a function $F:E(K_n) \to \RR^+$, such that 
$\sum_{j >i}F((i,j)) - \sum_{j < i}F((i,j)) = b_{i}$ for $1 \leq i \leq n$.
The  number $b_i$ is called the net flow at vertex $i$. Note that $b_n$ must be $-b_{1}-b_{2},\cdots-b_{n-1}$ for a flow to exist and thus the netflow vector $\sum_{i=1}^n b_{i}e_{i} = \lambda(b_{1},\ldots,b_{n-1}).$

A Kostant partition $(r_{\alpha_{i,j}}) \in K(\lambda(b_1,\ldots,b_{n-1})) $ directly corresponds to a flow with integer values and netflow $\lambda(b_{1},\ldots,b_{n-1})$, where $r_{\alpha_{i,j}}$ is taken to be the flow on the edge join vertices $i$ and $j$. The weighted sum of flows  in \cite{LMM}, called the weighted Erhart sum,  is identical to the one for Tesler Matrices.

On putting $t = q^{-1}$ in $\textrm{wt}_{q,t}$, we end up with the function $\textrm{wt}$ that gives the weights for the Kostant partition in the trace formula from  \cref{cor2.1.1}.

Thus, \[P_{(b_1,\cdots,b_{n-1})}(q,q^{-1}) = \frac{\tau(-\lambda(b_{1},\cdots,b_{n-1}))}{((q-1)(q^{-1}-1))^{n-1}}.\]

For $\lambda(b_{1},\cdots,b_{n-1})$, when each Kostant partition has at least $n-1$ contributing positive roots, $P_{(b_1,\cdots,b_n)}$ will be a polynomial in $q,q^{-1}$. In \cite{AGH+}, they observe that if $b_i > 0$ for all $i$, $\lambda(b_{1},\cdots,b_{n-1})$ naturally satisfies this condition and these are the vectors for which formula for $P_{(b_1,\cdots,b_{n-1})}(q,q^{-1})$ is conjectured. This is also the setting in which the results of \cite{LMM} are stated.  Note that \[\lambda(b_{1},\cdots,b_{n-1}) = \sum_{i = 1} ^{n-1} b_{i}\alpha_{i,n} = \sum_{j = 1}^{n-1} (\sum_{i = 1}^{j} b_j) \alpha_{j}.\]

If $b_i > 0$ for all $i$, $\lambda(b_{1},\cdots,b_{n-1}) = \sum_{i =1}  ^{n-1} a_{i} \alpha_i$ satisfies $a_1 < a_2 \cdots < a_{n-1}$ and one can check that
 \cref{thm1.1}, exactly recovers \cite[ Conjecture 5.10]{wil}. The set  $\{\alpha_{i,n}\}_{i=1}^n$ is a basis of $Q(A_{n-1})$, but in general  $\lambda \in Q^{+}(A_{n-1})$ will not have positive coefficients when expanded in this basis. 
\end{thm}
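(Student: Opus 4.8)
The plan is to study a residual point $t$ via the polar decomposition $t = cs$ with $c \in T_{rs}$ and $s \in T_u$, separating the ``compact'' datum $s$ from the ``real'' datum $c$. The first point is that the residual condition only involves the roots $\alpha$ with $t(\alpha) \in \{1,q\}$, and since $c(\alpha) \in \RR^+$ while $s(\alpha) \in S^1$, the equality $t(\alpha) = c(\alpha)s(\alpha) \in \{1,q\} \subset \RR^+$ forces $s(\alpha) = 1$, i.e. $\alpha \in \Phi_s$. Writing $c = \exp(\tfrac{k}{2}\xi)$ with $\xi \in \RR \otimes Q^\vee$ and $k = \log q$, and letting $\mathfrak{l} := C_{\mathfrak{g}}(s)$ be the reductive subalgebra with root system $\Phi_s$, one gets $t(\alpha) = 1 \iff \alpha(\xi) = 0$ and $t(\alpha) = q \iff \alpha(\xi) = 2$ for $\alpha \in \Phi_s$. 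Under the $\mathrm{ad}(\xi)$-grading $\mathfrak{l} = \bigoplus_j \mathfrak{l}_j$ this reads $N_1 := \#\{t(\alpha)=1\} = \dim\mathfrak{l}_0 - \mathrm{rank}(\Phi)$ and $N_q := \#\{t(\alpha)=q\} = \dim\mathfrak{l}_2$, so the whole residual inequality becomes a comparison of $\dim\mathfrak{l}_0$ with $\dim\mathfrak{l}_2$.

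I would then use the a priori inequality $\dim\mathfrak{l}_2 \le \dim\mathfrak{l}_0$, valid for any real semisimple $\xi$ (applied to the semisimple part of $\mathfrak{l}$ it gives $N_q - N_1 \le \mathrm{rank}(\Phi_s)$). Since a residual point requires $N_q - N_1 \ge \mathrm{rank}(\Phi)$, this forces $\mathrm{rank}(\Phi_s) = \mathrm{rank}(\Phi)$, i.e. $\Phi_s$ is a maximal-rank subsystem, and moreover forces equality $N_q - N_1 = \mathrm{rank}(\Phi)$, equivalently $\dim\mathfrak{l}_0 = \dim\mathfrak{l}_2$. The maximal-rank subsystems of $\Phi$ are classified up to $W$ by the Borel--de Siebenthal theorem: they are spanned by proper subsets of the nodes of the extended Dynkin diagram, and the corresponding finite-order elements $s \in T_u$ (read off from Kac coordinates) are in bijection with the deleted vertices, with $\Phi_s$ the subsystem of the diagram obtained by deletion. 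This is statement (1).

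For fixed $s$, the equality $\dim\mathfrak{l}_0 = \dim\mathfrak{l}_2$ is, by Dynkin's theorem, exactly the characterization of the neutral element $h$ of an $\mathfrak{sl}_2$-triple $(e,h,f)$ whose nilpositive part $e$ is a \emph{distinguished} nilpotent element of the semisimple algebra $\mathfrak{l}$: the $\mathrm{ad}(h)$-grading is even, the labels $\{\alpha(h)\}$ form a weighted Dynkin diagram of $\Phi_s$, and $\xi = \tfrac12 h$, so that $c = \exp(\tfrac{k}{2}h)$. This is statement (2). The final clause, that any $t = cs$ built from such a pair $(s,h)$ is a residual point, follows by reading the chain of equivalences backwards: the distinguished condition gives $\dim\mathfrak{l}_0 = \dim\mathfrak{l}_2$, hence $N_q - N_1 = \mathrm{rank}(\Phi)$.

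The main obstacle is the sharp comparison underlying both reductions, namely the inequality $\dim\mathfrak{l}_2 \le \dim\mathfrak{l}_0$ together with the identification of its equality cases with the Bala--Carter--Dynkin list of distinguished orbits. The delicate point is that $\xi$ is a priori only constrained on the eigenvalues $0$ and $2$ of $\mathrm{ad}(\xi)$, so one must show that an extremal $\xi$ is forced to have all eigenvalues even integers assembling into a distinguished weighted Dynkin diagram, which is precisely where the structure theory of nilpotent orbits enters.
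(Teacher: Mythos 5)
This theorem is not proved in the paper at all: it is imported verbatim from Opdam \cite[Proposition B.2]{op2}, so there is no internal proof to compare against, and the right benchmark is Opdam's argument. Your reduction is the standard one and matches its overall shape: polar decomposition $t=cs$ forces $s(\alpha)=1$ on every root with $t(\alpha)\in\{1,q\}$ (since $c(\alpha)\in\RR^+$, $s(\alpha)\in S^1$); writing $c=\exp(\tfrac{k}{2}\xi)$ turns the counts into $N_1=\dim\mathfrak{l}_0-\mathrm{rank}(\Phi)$ and $N_q=\dim\mathfrak{l}_2$ for $\mathfrak{l}=C_{\mathfrak{g}}(s)$ graded by $\mathrm{ad}(\xi)$; and once $\Phi_s$ is forced to have full rank, conjugating $s$ into the closure of the fundamental alcove places it at a vertex, which is exactly the deleted-node description of the extended Dynkin diagram (Borel--de Siebenthal/Kac coordinates). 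One point in your favor worth recording: you work with the condition $N_q-N_1\geq\mathrm{rank}(\Phi)$, which is the transpose of the definition as printed in this paper but is the correct convention --- it is the one consistent with Opdam, with the $A_1$ example ($t(\alpha_1)=q$ gives $N_q-N_1=1$), and with the paper's own use of the residual condition in the proof of \cref{thm3.1}; the printed definition evidently has the two sets swapped.

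The genuine gap is at the pivot of your argument: the inequality $\dim\mathfrak{l}_2\leq\dim\mathfrak{l}_0$, asserted as ``valid for any real semisimple $\xi$,'' together with the claim that equality holds exactly when $\xi=\tfrac12 h$ for $h$ a distinguished characteristic, attributed to Dynkin. Dynkin's theorem characterizes distinguished nilpotents among \emph{characteristics}, i.e.\ neutral elements $h$ of $\mathfrak{sl}_2$-triples, where the surjectivity $[\mathfrak{g}_0,e]=\mathfrak{g}_2$ yields $\dim\mathfrak{g}_0\geq\dim\mathfrak{g}_2$ and equality is equivalent to $e$ being distinguished. It says nothing about an arbitrary real semisimple $\xi$ constrained only at the levels $0$ and $2$: such a $\xi$ need not have integer (let alone even) eigenvalues, $\mathfrak{l}_2$ need not contain an element $e$ with $[\mathfrak{l}_0,e]=\mathfrak{l}_2$, and $\xi$ is not a priori a characteristic, so neither the inequality nor the equality analysis follows from the quoted theorem. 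You flag this yourself as ``the main obstacle,'' but that concession means the proposal assumes exactly what must be proved --- that an extremal $\xi$ is forced into the Bala--Carter--Dynkin list --- which is where Opdam and Heckman--Opdam invest essentially all the work (and where the published arguments partly resort to case analysis over the root-system classification). Everything surrounding that step in your sketch is correct bookkeeping, including the easy converse direction (a distinguished pair $(s,h)$ gives $N_q-N_1=\mathrm{rank}(\Phi)$, using that roots outside $\Phi_s$ satisfy $t(\alpha)\notin\RR^+$), but as it stands the proof of the forward direction is incomplete.
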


\section{Preliminaries on Total Residues and Coefficient formulas}
\label{section3}

%We will assume that our space has a fixed volume form. We fix the standard topology on $V$ and $V^{*}$.For a set $S \subset V \text{ or } V^{*}$,  we will use $S^{\circ}$ to denote its interior. 

%We will also assume that $\Phi^+ \subset \Gamma^*$, where $\Gamma^*$ is the $\ZZ$ - dual lattice of a $\ZZ$ lattice $\Gamma$ in $V$. 
\subsection{Total Residue}
In this section, we follow \cite{sv}. Let $E$ be a complex vector space of dimension $n$. Let $\Phi^+ \subset E^*$ be a finite collection of linear functionals that span $E^*$. For each $\alpha \in \Phi^{+}$, we can define a hyperplane $H_{\alpha} = \{z : \alpha(z) = 0 \}$. Let $H$ be the collection of the defined hyperplanes. 
\begin{defn}
    \[ R_{H} = \{ f \text{ | } f \text{ is a rational function on $V$ with poles along } \cup_{\alpha \in \Phi^+} H_{\alpha}\}. \]
\end{defn}

A general element $f$ of  $R_{H}$ looks like $\frac{p(z)}{\Pi_{i=1}^r \alpha_i(z)} $, where $p(z)$ is a polynomial and $[\alpha_{1},\ldots,\alpha_{r}] $ is a sequence of elements in $\Phi^+$.

Let $B(\Phi^+)$ be the collection of subsets of $\Phi^+$ that form a basis of $E^{*}$. Functions of the form $f_{\sigma} = \frac{1}{\Pi_{\alpha \in \sigma} \alpha(z)}$, where $ \sigma  \in B(\Phi^+)$ will be called simple elements. Together, they span a subspace $S_H$ of $R_H$. 

For any $v$ in $E$, we have the directional derivative $\partial_v$ in the direction of $v$. The following decomposition holds (\cite[Theorem 1.1]{sv}, \cite[Proposition 7]{bv}): 
\[R_H =(\sum \partial_v R_H) \bigoplus S_H.\] 
 \begin{defn}
     $\rm{Tres}$ $: R_H \to S_H$ is the natural restriction map defined via the above decomposition.
 \end{defn}

Following \cite[pg. 3]{sv}, we can extend the definition of  $\rm{\textrm{Tres}}$ to the algebra $\hat{R}_H$ of formal meromorphic functions on $E$ near zero, with poles in $H$. Near zero, any such function can be written as $\frac{P(z)}{\Pi_{i=1}^r \alpha_i(z)} $,  where $P(z)$ is a formal power series and $[\alpha_{1},\ldots,\alpha_{r}] $ is a sequence of elements in $\Phi^+$.  In that case, we define $ \textrm{Tres}(\frac{P(z)}{\Pi_{i=1}^r \alpha_i(z))}) = \textrm{Tres}(\frac{P_{r-n}(z)}{\Pi_{i=1}^r \alpha_i(z))}) $, where $P_{r-n}(z)$ is the  homogeneous component of degree $r-n$ in $P(z)$. 

\begin{example}\cite[Example 2]{sv}
   We will calculate the total residue of the following function: \[g(z_1,z_2) = \frac{e^{z_1}}{(1-e^{-z_1})(1-e^{-z_2})(1-e^{-z_1 - z_2})}.\]

    This function is in $\hat{R}_H$, where $\Phi^+ = \{e_1^{*},e_2^{*},e_1^{*} + e_2^{*} \}$ and $H$  is the corresponding hyperplane arrangement. 
    We can write 
     \[g(z_1,z_2) = \frac{1}{z_1z_2(z_1 + z_2)}(\frac{e^{z_1}z_1z_2(z_1+z_2)}{(1-e^{-z_1})(1-e^{-z_2})(1-e^{-z_1 - z_2})}) = \frac{P(z)}{z_1z_2(z_1 + z_2)}.\]

     So to find the total residue, we need to find the degree $3-2 = 1$ terms in $P(z_1,z_2)$. Note that $\frac{z}{1-e^{-z}} = 1 + \frac{z}{2} + \cdots$ close to zero and thus $P_1(z_1,z_2) = \frac{3z_1}{2} + \frac{z_2}{2} + \frac{(z_1+z_2)}{2}$.

     \[\textrm{Tres}(g(z_1,z_2)) = \textrm{Tres}(\frac{\frac{3z_1}{2} + \frac{z_2}{2} + \frac{(z_1+z_2)}{2}}{z_1z_2(z_1 + z_2)}) = \frac{3}{2z_2(z_1 + z_2)} + \frac{1}{z_2(z_1 + z_2)} + \frac{1}{z_1(z_1 + z_2)}\]
\end{example}

We have the following lemma,

\begin{lem}(\cite[Lemma 1.3]{sv})
\label{lem2.1}
    Consider \[g(z) = \frac{e^{\alpha(z)}}{\Pi_{i=1}^r(1-u_ie^{-\alpha_{i}(z)})},\]
    where $[\alpha_{1}, \ldots,\alpha_{r}]$ is a sequence of elements in $\Phi^+$ and $u_i \in \CC^*$ for all $i$.
    \begin{enumerate}
        \item If the collection of  $\alpha_i$ for which $u_i = 1$ fails to span $E^{*}$, then  $\textrm{Tres}(g(z)) = 0 $ .
        \item If the $\alpha_i$ for which  $u_i = 1$ form a basis of $E^{*}$, then 
        \[\textrm{Tres}(g(z)) =  \frac{1}{\Pi_{u_j=1}\alpha_{j}(z)}[\frac{(\Pi_{u_j=1}\alpha_{j}(z))e^{\alpha(z)}}{\Pi_{i=1}^r(1-u_ie^{-\alpha_{i}(z)})}]_{0} = \frac{1}{\Pi_{u_j=1}\alpha_{j}(z)} \frac{1}{\Pi_{u_i \neq 1}(1-u_i)}. \]
    \end{enumerate}
\end{lem}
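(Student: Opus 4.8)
The plan is to isolate the genuinely singular factors of $g$ and then feed the result into the definition of $\textrm{Tres}$ on $\hat{R}_H$. Set $S = \{ i : u_i = 1\}$. For $i \in S$ we have $1 - e^{-\alpha_i(z)} = \alpha_i(z)\,\phi_i(z)$, where $\phi_i(z) = \frac{1-e^{-\alpha_i(z)}}{\alpha_i(z)} = 1 - \tfrac{1}{2}\alpha_i(z) + \cdots$ is a unit in the ring of power series near $0$ (it is nonzero there), while for $i \notin S$ the factor $1 - u_i e^{-\alpha_i(z)}$ equals $1 - u_i \neq 0$ at $z=0$ and is likewise a unit. I would therefore factor
$$ g(z) = \frac{P(z)}{\prod_{i \in S}\alpha_i(z)}, \qquad P(z) := e^{\alpha(z)}\prod_{i\in S}\frac{\alpha_i(z)}{1-e^{-\alpha_i(z)}}\prod_{i\notin S}\frac{1}{1-u_i e^{-\alpha_i(z)}}, $$
with $P$ a power series. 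By the definition of $\textrm{Tres}$ on $\hat{R}_H$, we then have $\textrm{Tres}(g) = \textrm{Tres}\big(P_{|S|-n}(z)/\prod_{i\in S}\alpha_i(z)\big)$, where $P_{|S|-n}$ is the homogeneous component of degree $|S|-n$; note this reduced expression is a genuine element of $R_H$, homogeneous of degree $-n$.

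For part (2), where $\{\alpha_i\}_{i\in S}$ is a basis of $E^*$, we have $|S| = n$, so the relevant component is the constant $P_0 = P(0)$. Evaluating $P$ factor by factor at $0$ (using $\phi_i(0)=1$ and $e^{\alpha(0)}=1$) gives $P(0) = \prod_{i\notin S}(1-u_i)^{-1}$; this matches the bracketed constant $[\,\cdot\,]_0$ in the statement, since $\big(\prod_{i\in S}\alpha_i(z)\big)e^{\alpha(z)}/\prod_i(1-u_ie^{-\alpha_i(z)})$ is exactly $P(z)$. Because $\{\alpha_i\}_{i\in S}$ is a basis, $1/\prod_{i\in S}\alpha_i(z)$ is a simple element of $S_H$, which $\textrm{Tres}$ fixes, and we conclude $\textrm{Tres}(g) = P(0)/\prod_{i\in S}\alpha_i(z)$, the asserted product formula.

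For part (1), I would show the reduced function $h := P_{|S|-n}(z)/\prod_{i\in S}\alpha_i(z)$ lies in $\sum_v \partial_v R_H$, so that its total residue vanishes. Since $\{\alpha_i\}_{i\in S}$ fails to span $E^*$, choose $v \in E$, $v \neq 0$, with $\alpha_i(v)=0$ for all $i\in S$, and coordinates $z_1,\dots,z_n$ with $v = \partial_{z_n}$; then $D := \prod_{i\in S}\alpha_i$ depends only on $z_1,\dots,z_{n-1}$. Expanding the numerator in powers of $z_n$, each term $z_n^k c_k(z')/D$ with $k\ge 1$ equals $\partial_v\big(\tfrac{z_n^{k+1}}{k+1}\,c_k/D\big)$, and the $k=0$ term $c_0(z')/D$, being independent of $z_n$, equals $\partial_v(z_n\, c_0/D)$; all of these lie in $\partial_v R_H$, giving $\textrm{Tres}(h)=0$. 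The step needing the most care — and the main obstacle — is precisely this part (1) argument when $\{\alpha_i\}_{i\in S}$ fails to span yet $|S|$ may exceed $n$, so a naive degree count does not force $P_{|S|-n}=0$; the annihilator-vector and antiderivative trick above is what resolves it. I would also record the auxiliary fact that any homogeneous element $f\in R_H$ of degree $\neq -n$ has vanishing total residue, via Euler's identity $\sum_i \partial_{z_i}(z_i f) = (n+\deg f)\,f$, which both legitimizes the passage to $P_{|S|-n}$ and is used implicitly throughout.
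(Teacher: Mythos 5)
Your proof is correct. There is, however, nothing in the paper to compare it against: the paper states this lemma as an imported result, citing \cite[Lemma 1.3]{sv} without reproducing any argument, so your write-up is a self-contained justification of a quoted fact rather than an alternative to an internal proof. Your route is the standard Szenes--Vergne/Brion--Vergne one, and the details check out. You factor $g$ as $P(z)/\prod_{i\in S}\alpha_i(z)$ using that $\alpha_i(z)/(1-e^{-\alpha_i(z)})$ (for $u_i=1$) and $(1-u_ie^{-\alpha_i(z)})^{-1}$ (for $u_i\neq 1$) are units near $0$; in case (2), $|S|=n$ forces the reduced numerator to be the constant $P(0)=\prod_{u_i\neq 1}(1-u_i)^{-1}$, and since $1/\prod_{i\in S}\alpha_i$ is a simple element of $S_H$ it is fixed by the projection $\textrm{Tres}$, giving the claimed formula; in case (1), your choice of $v\neq 0$ annihilating all $\alpha_i$, $i\in S$, together with the explicit antiderivatives $z_n^{k+1}c_k/((k+1)D)$, exhibits the reduced function as an element of $\partial_v R_H$, hence of total residue zero. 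Two points you handle that deserve to stay explicit: first, the paper's definition of $\textrm{Tres}$ on $\hat{R}_H$ is phrased for a representation with denominator $\prod_{i=1}^r\alpha_i$, while you use the smaller denominator $\prod_{i\in S}\alpha_i$; this is legitimate because cross-multiplying two representations and comparing homogeneous components shows the reduced rational function is independent of the representation (equivalently, it follows from your Euler-identity observation that $\textrm{Tres}$ annihilates homogeneous elements of degree $\neq -n$). Second, you correctly isolate the only genuinely delicate case in (1), namely $|S|\geq n$ with non-spanning forms, where the naive degree count is inconclusive and the annihilator-vector argument is what closes the gap; when $|S|<n$ the component $P_{|S|-n}$ vanishes for degree reasons alone.
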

\subsection{The ring $M^{\Phi^+}$ and Reduced Set of Poles}
%We will also assume that $\Phi^+ \subset \Gamma^*$, where $\Gamma^*$ is the $\ZZ$ - dual lattice of a $\ZZ$ lattice $\Gamma$ in $V$. 

Let $V$ be a real vector space of dimension $n$. We will fix $\ZZ$ modules  $\Gamma,\Gamma^{*}$ inside $V$ and $V^{*}$ respectively, dual to each other with respect to the standard pairing between  $V$ and $V^*$.  Let $vol$ denote the volume form that assigns volume $1$ to the parallelepiped spanned by a $\ZZ$ basis of $\Gamma^{*}$.
For subsets $A, B $ of $V$, we will use $A+B$ to denote their Minkowski sum, 
$$A + B := \{a + b \text{ | } a \in A, b \in B\}.$$
For a collection of vectors $\sigma = [\alpha_{1}, \ldots, \alpha_r]$, we will use $\textrm{Box}(\sigma)$ to denote the set  $\sum_{i = 1}^r [0,1]\alpha_r$.

Let $\Phi^+ \subset V^*$ be a finite collection of linear functionals that span $V^*$. We will assume $\Phi^{+} \subset \Gamma^{*}$ and that  they lie in a half space $V_{+}^*$ of $V^*$ .  We also fix the standard topology on $V$ and $V^{*}$. For a set $E$, we will denote its interior by $E^{\circ}$. We also have the complex vector space  $V \otimes \CC := V_{\CC} $. For each $\alpha \in \Phi^{+}$, we can define a hyperplane $H_{\alpha} = \{z  \in V_{\CC}: \alpha(z) = 0 \}$ . Let $H_{\Phi^{+}}$ be the collection of the defined hyperplanes. 

\begin{defn}
    $M^{\Phi^+}$ is the ring of functions on $V \otimes_{\RR} \CC := V_{\CC} $ generated by the functions $z \to e^{\alpha(z)}$ , where $\alpha$ in $\Gamma^*$ and $\frac{1}{1-ue^{\alpha_{i}(z)}}$ where $\alpha_{i} \in \Phi^+$, $u_i \in \CC^{*}$. 
    A general element $F$ in $M^{\Phi^+}$ looks like
    \[ F = \frac{\sum_{\zeta \in \Gamma^*} c_{\zeta}e^{\zeta}}{\prod_{i=1}^r(1-u_{i}e^{\alpha_i})} ,\]
    where $c_{\zeta},u_i \in \CC^*$,  $c_{\zeta} \neq 0$ for a finite number of $\zeta \in \Gamma^* $ and $\alpha_i \in \Phi^{+}$.
    We define
    \[
      N(F) := \{ \mu | \mu + \zeta \in \sum_{i=1}^r [0,1]\alpha_i,  \forall c_{\zeta} \neq 0 \}
    \]

    If we expand $\frac{1}{(1-u_{i}e^{\alpha_i})} $ as $\sum_{k=0}^{\infty} u_i^ke^{k\alpha_i}$, we can define a power series expansion $r^+(F)$ for any $F$ in $M^{\Phi^+}$, where \[r^+(F) :=(\sum_{\zeta \in \Gamma^*} c_{\zeta}e^{\zeta})(\prod_{i=1}^r(\sum_{k=0} u_i^ke^{k\alpha_i}) =  \sum_{\zeta \in \Gamma^*}c_{\zeta}e^{\zeta}\prod_{i=1}^r(\sum_{k=0} u_i^ke^{k\alpha_i})=\sum_{\zeta \in \Gamma^*} a_\zeta e^\zeta\]
\begin{defn}
    For $\zeta \in \Gamma^*$ and $F \in M^{\Phi^+}$, if  $r^+(F) = \sum_{\zeta \in \Gamma^*} a_{\zeta}e^{\zeta}$, then we define $c(r^+(F), \zeta) := a_\zeta$.
\end{defn}
\end{defn}

In general, if the set of $\alpha_i$ where $u_i = 1$ does not span $V^*_{\CC}$, the total residue of a function $F= \frac{\sum_{\zeta \in \Gamma} c_{\zeta}e^{\zeta}}{\prod_{i=1}^n(1-u_{i}e^{\alpha_i})}$ will vanish (\cref{lem2.1}). Suppose that we have a point $p$ such that the set of $\alpha_{i}$ for which $e^{\alpha_{i}(p)} = u_i^{-1}$ spans $V^*_{\CC}$.  The function $z \to F(p-z)$ may have non-vanishing total residue. Note that if we add an element of $2i\pi\Gamma$ to $p$ that will not change $e^{\alpha_{i}(p)}$ and so we consider  the collection of such points $p$ upto translation by $2i\pi\Gamma$ and call this set $RSP(F)$ or a reduced set of poles. \\

For $\lambda \in \Gamma^*$,  define \[s[F, \Gamma](\lambda) := \sum_{p \in RSP(F)} \textrm{Tres}(e^{-\lambda(p-z)}F(p-z)) \in S_{H_{\Phi^{+}}}.\]
This sum will be important in determining our series expansion later. 

\subsection{The cone $C(\Phi^+)$ and the main theorem }

For a collection of elements $\Delta\subset \Phi^+$, we can look at the positive cone generated by $\Delta$,  $$C(\Delta) := \sum_{\alpha \in \Delta} \RR^{+}\alpha = \{v |  v = \sum_{\alpha \in \Delta} m_{\alpha}\alpha, \text{} m_{\alpha} \in \RR_{\geq 0}\text{ for all } \alpha \in \Delta\}.$$

If $z \in C(\Phi^+) $ is in a cone generated by fewer than $n$ elements in $\Phi^+$,  we say that it is singular. The set of singular vectors is denoted by $C_{sing}(\Phi^+)$ and its complement in $C(\Phi^+)$ is denoted by $C_{reg}(\Phi^+)$.  The cone $C_{reg}(\Phi^+)$ is open and the connected components of  $C_{reg}(\Phi^+)$ are  open conic chambers called \textit{big chambers}. 

Given a big chamber $\mathfrak{c}$, we can define a functional $\chi_{\mathfrak{c}}$ on $S_H$, where for $\sigma \in B(\Phi^+)$ ,
$\chi_{\mathfrak{c}}(f_{\sigma}) := vol(\text{Box}(\sigma))$, if 
$\mathfrak{c} \subset C(\sigma)$, and $\chi_{\mathfrak{c}} = 0$ otherwise. Note that $\text{Box}(\sigma)$ for a basis $\sigma$ will be the parallepiped spanned by the basis vectors.

We can now state the main theorem which relates total residues and power series expansions.

\begin{thm}\cite[Theorem 2.3]{sv}
\label{thm2.1}
   For a function $F \in M^{\Phi^{+}}$ and $\mathfrak{c}$ a big chamber of $C(\Phi^{+})$, Suppose $\lambda \in \Gamma^*$ such that $(\lambda + N(F)^{\circ}) \cap \mathfrak{c} \neq \phi $. For such $\lambda$, 
   \[
    c(\lambda,r^{+}(F)) = \chi_{\mathfrak{c}}(s[F, \Gamma](\lambda)) = \sum_{p \in RSP(F)} \chi_{\mathfrak{c}}(\textrm{Tres}(e^{-\lambda(p-z)}F(p-z))).
   \]

\end{thm}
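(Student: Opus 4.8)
The plan is to realize $c(\lambda,r^+(F))$ as a contour integral over a compact cycle, deform that cycle across the real hyperplane arrangement attached to the poles of $F$, and collect the residues produced at the residual points, with the functional $\chi_{\mathfrak{c}}$ encoding exactly which residues are swept. The second equality in the statement is immediate, since $s[F,\Gamma](\lambda)=\sum_{p\in RSP(F)}\textrm{Tres}(e^{-\lambda(p-z)}F(p-z))$ by definition and $\chi_{\mathfrak{c}}$ is linear on $S_{H_{\Phi^{+}}}$; so all the content is in the first equality $c(\lambda,r^+(F))=\chi_{\mathfrak{c}}(s[F,\Gamma](\lambda))$.

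First I would set up the integral representation. Every generator $e^{\alpha}$ with $\alpha\in\Gamma^{*}$ and every factor $1/(1-u_ie^{\alpha_i})$ is invariant under $z\mapsto z+2\pi i\gamma$ for $\gamma\in\Gamma$, so $F$ descends to $V_{\CC}/2\pi i\Gamma$, whose imaginary directions form a compact torus. Because $\Phi^{+}$ lies in a half-space, pushing $\mathrm{Re}(z)=x_0$ far in the appropriate direction makes $|u_ie^{\alpha_i(x_0)}|<1$ for all $i$, so the geometric expansions defining $r^+(F)=\sum_{\zeta}a_{\zeta}e^{\zeta}$ converge there; the characters $e^{\zeta}$, $\zeta\in\Gamma^{*}$, are orthogonal on the torus, so integrating against $e^{-\lambda(z)}$ extracts the coefficient,
\[
c(\lambda,r^+(F))=\frac{1}{vol}\int_{\mathrm{Re}(z)=x_0}e^{-\lambda(z)}\,F(z)\,dz,
\]
valid (up to the standard normalization of the cycle) for every $x_0$ in the open convergence chamber of the real arrangement $\{\mathrm{Re}\,\alpha_i(z)=-\log|u_i|\}$.

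Next I would deform the cycle. The integral is locally constant in $x_0$ and jumps by a lower-dimensional residue integral each time $x_0$ crosses a real wall where $F$ develops poles on the cycle. I would move $x_0$ across the arrangement into the opposite chamber, where $F$ admits the complementary geometric expansion supported in the opposite cone $-C(\Phi^{+})$ shifted by the numerator exponents; the hypothesis $(\lambda+N(F)^{\circ})\cap\mathfrak{c}\neq\emptyset$ places $\lambda$ in the cone direction selected by $\mathfrak{c}$ and guarantees that $e^{\lambda}$ does not occur in that expansion, so the terminal cycle integral vanishes. Iterating the wall-crossings down to $0$-dimensional residues then writes $c(\lambda,r^+(F))$ as a finite sum of point residues, one at each point $p$ (mod $2\pi i\Gamma$) where enough factors $1-u_ie^{\alpha_i}$ vanish simultaneously for the corresponding $\alpha_i$ to span $V_{\CC}^{*}$ — precisely the elements of $RSP(F)$. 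Localizing near such a $p$ by $z=p-w$, the germ $e^{-\lambda(p-w)}F(p-w)$ lies in $\hat{R}_{H_{\Phi^{+}}}$, and I would match the iterated residue produced by the deformation with the composite $\chi_{\mathfrak{c}}\circ\textrm{Tres}$: $\textrm{Tres}$ projects onto the simple part in $S_{H_{\Phi^{+}}}$ (annihilating $\sum_v\partial_vR_{H_{\Phi^{+}}}$), \cref{lem2.1} evaluates the resulting simple fractions, and $\chi_{\mathfrak{c}}$ supplies the factor $vol(\text{Box}(\sigma))$ while killing those $f_{\sigma}$ with $\mathfrak{c}\not\subset C(\sigma)$. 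This last selection matches which walls the ray through $\mathfrak{c}$ actually crosses, so extending the sum to all of $RSP(F)$ is harmless and reproduces $\chi_{\mathfrak{c}}(s[F,\Gamma](\lambda))$.

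The main obstacle is the multidimensional residue bookkeeping in the deformation and identification steps. In dimension one everything reduces to the classical residue theorem, but for $\dim V=n>1$ one must justify the iterated contour shifts in several complex variables, control the vanishing of the terminal cycle, and reconcile the analytically-defined iterated residue with the algebraic functional $\chi_{\mathfrak{c}}\circ\textrm{Tres}$. I expect the cleanest organization is an induction on $n$ that peels off one variable at a time: the innermost variable is handled by a one-variable partial-fraction expansion and residue computation, and the remaining integral over an $(n-1)$-dimensional cycle with fewer hyperplanes is treated by the inductive hypothesis, with the decomposition $R_{H_{\Phi^{+}}}=(\sum_v\partial_vR_{H_{\Phi^{+}}})\oplus S_{H_{\Phi^{+}}}$ guaranteeing that the residue extraction is well defined at each stage.
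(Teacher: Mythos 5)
You should first be aware that the paper contains no proof of this statement at all: \cref{thm2.1} is imported verbatim from \cite[Theorem 2.3]{sv} and used as a black box, so there is no internal argument to compare yours against; the only meaningful comparison is with Szenes--Vergne's own development. That said, your overall plan --- realize $c(\lambda,r^{+}(F))$ as a coefficient-extraction integral over the compact torus $\{\mathrm{Re}(z)=x_0\}$ (valid by orthogonality of characters and the half-space hypothesis on $\Phi^{+}$), deform the cycle across the real arrangement, and identify the accumulated residues with $\chi_{\mathfrak{c}}\circ\textrm{Tres}$ --- is exactly the Jeffrey--Kirwan residue strategy that \cite{sv}, following Brion--Vergne, develops; and you are right that the second equality in the statement is purely definitional.

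However, as a proof your proposal has genuine gaps, and they sit at precisely the points where the content of the theorem lives. First, the multidimensional wall-crossing step is asserted rather than proved: for $n>1$ the poles meet the cycle along whole subtori, the residue picked up depends on orientations and on the order in which variables are peeled off, and making ``the integral jumps by a lower-dimensional residue integral'' rigorous is the hard part of the theory (in \cite{sv} this is what the formalism of iterated residues and diagonal bases is for); your closing paragraph acknowledges this but defers it entirely. Second, your selection rule conflates $V$ and $V^{*}$: the deformation of $x_0$ takes place in $V$, while $\mathfrak{c}$ and the cones $C(\sigma)$ live in $V^{*}$, so the phrase ``which walls the ray through $\mathfrak{c}$ actually crosses'' has no meaning until you specify the duality relating the deformation direction to the chamber --- and that correspondence is exactly the nontrivial bookkeeping that makes $\chi_{\mathfrak{c}}$, rather than some other functional on $S_{H_{\Phi^{+}}}$, appear. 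Third, and most seriously, the hypothesis $(\lambda+N(F)^{\circ})\cap\mathfrak{c}\neq\phi$ enters your argument only as a vague remark that $e^{\lambda}$ ``does not occur'' in the terminal expansion. This hypothesis is not a technicality: $\lambda\mapsto c(\lambda,r^{+}(F))$ is only piecewise quasi-polynomial, $\chi_{\mathfrak{c}}(s[F,\Gamma](\lambda))$ is the single exponential-polynomial attached to the chamber $\mathfrak{c}$, and the identity is simply false for $\lambda$ outside the region the hypothesis carves out (the paper's own $A_2$ example shows the formulas attached to the two chambers differ). A complete proof must therefore invoke the hypothesis sharply --- in your setup, in the vanishing of the terminal cycle integral --- and a sketch in which the hypothesis could be deleted without visibly breaking any step cannot be correct as written.
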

\begin{example}
    Let us look at $A_2$ root system inside $V^{*} = \{v \text{ | } v \in \CC^*, \text{ } v(e_1+e_2 + e_3) = 0 \}$, with $\Phi^+ = \{\alpha_1 = e_{1} - e_{2},\alpha_2 = e_{2}-e_{3},\beta = \alpha_1 + \alpha_2\}$.  The set of possible bases, $B(\Phi^+) = \{\{\alpha_1,\alpha_2\},\{\alpha_1,\alpha_1 + \alpha_2\},\{\alpha_1 + \alpha_2,\alpha_2\}\}$.  The role of  $\Gamma$  will be played by $Q^{\vee}(A_{2})$ and thus $\Gamma^* = P(A_2)$.  We write $t \in \Hom(P, \CC^*) = e^{z}$ where $z \in V$. We fix a coordinate system $z = (z_1,z_2)$ dual to $\{\alpha_1, \alpha_2\}$. The weight lattice $P$ has a basis of fundamental weights $\omega_1, \omega_2$ dual to the co-roots. We define a volume form on $V^*$, with the unit parallepiped of fundamental weights having volume 1. With respect to this form, the volume of the parallepiped determined by $\alpha_{1}, \alpha_{2}$ will be the determinant of the change of basis matrix $(\langle\alpha_{i},\alpha_{j}^\vee\rangle)$ which will be 3 in this case.  This will also be the cardinality of the quotient lattice $P/Q$.  Consider the following function,
    
\[\eta(z_{1},z_{2}) = \prod_{\alpha \in \Phi^+}(\frac{(1-e^{\alpha(z)})^2}{(1-qe^{\alpha(z)})(1-q^{-1}e^{\alpha(z)})}).\]
    
The singular vectors are the spans of each of the positive roots while the connected components of the regular part are the interiors of $C(\alpha_1,\alpha_1 + \alpha_2)$ and $C(\alpha_2,\alpha_1 + \alpha_2)$. 
    
    If $\mathfrak{c} = C(\alpha_1,\alpha_1 + \alpha_2)^{\circ} $ we have $\chi_{\mathfrak{c}}(\frac{a}{z_1(z_1+z_2)} + \frac{b}{z_2(z_1+z_2)} + \frac{c}{z_1(z_2)}) = a + c.$

    Let $\lambda = a_{1}\alpha_{1} + a_{2}\alpha_{2}$. We will compute compute \[ \chi_{\mathfrak{c}}(s[\eta, \Gamma](\lambda)) = \sum_{p \in RSP(\eta)} \chi_{\mathfrak{c}}(\textrm{Tres}(e^{-\lambda(p-z)}F(p-z))).\]

If  $p = (p_1,p_2)$ is a representation of an element in $RSP(\eta)$,  we should have\[ e^{\alpha(p)}q = 1 \text{ or } e^{\alpha(p)}q^{-1} = 1  \text{ for }  \alpha \in B ,\]where $B \in B(\Phi^+)$. For a fixed choice of B and $\{e^{\alpha(p)}\}_{\alpha \in B}$, we have a unique class of such points $p$ modulo $2\pi i Q^*$, where $Q^*$ is the lattice dual to $Q$. $Q^{\vee} \subset Q^{*}$ and since $RSP$ is the set of points $p$ modulo $2\pi i Q^\vee$ we will have $|Q^*/Q^\vee|$  classes in $RSP$ corresponding to a choice of $B$ and $\{e^{\alpha(p)}\}_{\alpha \in B}$ . We can calculate $|Q^*/Q^\vee|$ taking duals:
    \[|Q^*/Q^\vee| = |(Q^\vee)^*/(Q^*)^*| = |P/Q| = 3. \]

    In each case using  \cref{lem2.1}, we can calculate $\chi_{\mathfrak{c}}(s[\eta, \Gamma](\lambda))$. We have the following cases with non-zero contribution:
    \begin{enumerate}
    \item \[e^{p_1} = q, e^{p_2} = q, e^{p_1 + p_2} = q^2,\]\[\chi_{\mathfrak{c}}(\textrm{Tres}(e^{-\lambda(p-z)}\eta(p-z))) = \frac{1}{3}q^{-a_1-a_2}\frac{(1-q)^{4}}{z_1z_2(1-q)(1-q^3)},\]
    \item \[e^{p_1} = q^{-1}, e^{p_2} = q^{-1}, e^{p_1 + p_2} = q^{-2},\]\[\chi_{\mathfrak{c}}(\textrm{Tres}(e^{-\lambda(p-z)}\eta(p-z))) =  \frac{1}{3}q^{a_1+a_2}\frac{(1-q^{-1})^{4}}{(1-q^{-1})(1-q^{-3})},\]
    
   \item\[
    e^{p_1} = q, e^{p_2} = q^{-2}, e^{p_1 + p_2} = q^{-1}\]\[
    \chi_{\mathfrak{c}}(\textrm{Tres}(e^{-\lambda(p-z)}\eta(p-z))) =  \frac{1}{3}q^{-a_1+2a_2}\frac{(1-q)^2(1-q^{-1})^2(1-q^{-2})^2}{(1-q^2)(1-q^{-2})(1-q^{-1})(1-q^{-3})},
    \]
    \item\[
    e^{p_1} = q^{-1}, e^{p_2} = q^{2}, e^{p_1 + p_2} = q\]\[
    \chi_{\mathfrak{c}}(\textrm{Tres}(e^{-\lambda(p-z)}\eta(p-z))) = \frac{1}{3} q^{a_1-2a_2}\frac{(1-q)^2(1-q^{-1})^2(1-q^2)^2}{(1-q^2)(1-q^{-2})(1-q^{1})(1-q^{3})}.
    \]
    \end{enumerate}

 Note that each case corresponds to three classes with the exact same residue. Thus, on adding the terms,  we get
    \[\chi_{\mathfrak{c}}(s[\eta, \Gamma](\lambda))= q^{-a_1-a_{2}}(q-1)^{-4}\frac{q^{3a_2}-1}{q^3-1}\frac{q^{2a_1-a_2}-1}{q-1}.\]

\end{example}

\begin{comment}
    \[e^{p_1} = q, e^{p_2} = q^{-1}, e^{p_1 + p_2} = 1, \chi_{\mathfrak{c}}(\textrm{Tres}(e^{-\lambda(p-z)}\eta(p-z))) = 0 \]
    \[e^{p_1} = q^{-1}, e^{p_2} = q, e^{p_1 + p_2} = 1, \chi_{\mathfrak{c}}(\textrm{Tres}(e^{-\lambda(p-z)}\eta(p-z))) = 0\]
    \[
    e^{p_1} = q, e^{p_2} = 1, e^{p_1 + p_2} = q, \chi_{\mathfrak{c}}(\textrm{Tres}(e^{-\lambda(p-z)}\eta(p-z))) = 0
    \]
    \[
    e^{p_1} = q^{-1}, e^{p_2} = 1, e^{p_1 + p_2} = q^{-1},
    \chi_{\mathfrak{c}}(\textrm{Tres}(e^{-\lambda(p-z)}\eta(p-z))) = 0
    \]
    \[
    e^{p_1} = 1, e^{p_2} = q, e^{p_1 + p_2} = q,
    \chi_{\mathfrak{c}}(\textrm{Tres}(e^{-\lambda(p-z)}\eta(p-z))) = 0
    \]
    \[
    e^{p_1} = 1, e^{p_2} = q^{-1}, e^{p_1 + p_2} = q^{-1},
    \chi_{\mathfrak{c}}(\textrm{Tres}(e^{-\lambda(p-z)}\eta(p-z))) = 0
    \].

In what follows, we will need the following useful result:
\begin{thm}\label{3.2}( \cite[Theorem 1]{siz})
    Let $\Lambda = \alpha_{1}\cdots \alpha_{n}$ be forms in $\Phi^{+}$. If $P$ is a homogeneous polynomial function on $V$ of degree $n-dim(V)$\[
    \chi_{\mathfrak{c}}(\textrm{Tres}(\frac{P}{\prod_{i=1}^n \alpha_{i}})) = 0 \iff P \in \langle \prod_{j \in J} \alpha_{j} | \mathfrak{c} \not\subset C(\Lambda \setminus J) \rangle\]
\end{thm}
\end{comment}
\mycomment{
\begin{defn}
    Let $\Lambda = \{\alpha_{1},\ldots, \alpha_{n}\}$ be a collection of  forms in $\Phi^{+}$ , and $\mathfrak{c}$  a big chamber of $C(\Phi^{+})$. We  define the polynomial ideal $J_{\mathfrak{c}}(\Lambda)$ as the ideal generated by homogeneous polynomials of the form $\prod_{\alpha \in J} \alpha$, where $J \subset [n]$, such that $\mathfrak{c} \not\subset  C(\Lambda \setminus \{\alpha_{j}\}_{j \in J}) $.
\end{defn}
In what follows, we will need the following useful computational result.
\begin{thm}\label{thm3.2}(\cite[Theorem 1]{siz})
    For $\Lambda = \{\alpha_{1},\ldots, \alpha_{n}\}$ a collection of forms in $\Phi^{+}$ , and $\mathfrak{c}$  a big chamber of $C(\Phi^{+})$. If $P$ is a homogeneous polynomial function on $V$ of degree $n-dim(V)$,  \[
    \chi_{\mathfrak{c}}(\textrm{Tres}(\frac{P}{\prod_{i=1}^n \alpha_{i}})) = 0 \iff P \in J_{\mathfrak{c}}(\Lambda).\]
\end{thm}
}

 \section{Computing the Trace using Residues}
 \label{section4}

Let $\Phi \subset V^*$ be an irreducible root system.  We fix a decomposition $\Phi = \Phi^+ \cup \Phi^{-1}$. We have the weight lattice $P \subset V^*$,  which will play the role of  $\Gamma^*$  from the previous section.  The dual lattice will be the coroot lattice $Q^{\vee}$. We have a natural volume form on $P$ where the parallepiped of fundamental weight (i.e the forms dual to the simple co-roots) has volume 1.\\

 While in $\eta(t)$,  $t$ lies in the torus $\Hom(P,\CC^*)$, we can pull back to the Lie algebra of  $\Hom(P,\CC^*)$, which is isomorphic to $V$ via the exponential map and so we have the natural function that we again denote by $\eta(z)$ whose domain is $V$ given by 
 \[\eta(z) = \Pi_{\alpha \in \Phi^{+}} \frac{(1-e^{\alpha(z)})^2}{(1-q^{-1}e^{\alpha(z)})(1-qe^{\alpha(z)})} = \Pi_{i =1 }^r \frac{1-e^{\alpha_i(z)}}{1-u_ie^{\alpha_i(z)}},\] 
 
 where $\alpha_{1},\alpha_{2},\ldots,\alpha_{r}$ is the collection of all positive roots occurring with multiplicity 2. An element $t \in \Hom(P,\CC^*)$ determines a unique class of points $p$ modulo $2\pi i Q^{\vee}$, such that $e^{p} = t$.

 Restating \cref{thm2.2} in the context of Section \ref{section3}, it states that $c(\lambda,r^+(\eta)) = \tau(\theta_{-\lambda}).$

 We can expand the numerator of $\eta$ to get \[\eta(z) = \frac{\sum_{i_1<i_2 \cdots <{i_k}} \pm 1 e^{(\alpha_{i_1} + \cdots +\alpha_{i_k})(z)}}{\Pi_{i =1 }^r(1-u_ie^{\alpha_i(z)})}.\]

 Note that $RSP(\eta)$, depends only on  $\Pi_{i =1 }^r(1-u_ie^{\alpha_i(p-z)})$
 and  henceforth, we refer to this set as $RSP$.

 Let $\mathfrak{c}$ be a big chamber of the cone $C(\Phi^+)$. Let us fix $\lambda \in \mathfrak{c}$.
 We have the following result:

 \begin{thm}\label{thm3.1}
   For $\lambda \in  \mathfrak{c}  \cap  Q^+$,
   \[
    c(\lambda,r^+(\eta)) = \sum_{t_p \text{ is a residual point}}{\chi_{\mathfrak{c}}(\textrm{Tres}(e^{-\lambda(p-z)}\eta(p-z))}.
   \]
     
 \end{thm}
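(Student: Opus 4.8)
The goal is to deduce Theorem \ref{thm3.1} from the general Szenes–Vergne machinery (\cref{thm2.1}) specialized to $F = \eta$. The plan is to verify that the hypotheses of \cref{thm2.1} hold for $\eta$ and the chamber $\mathfrak{c}$, and then to reconcile the indexing set of the sum — general reduced set of poles $RSP(\eta)$ on one side, versus residual points $t_p$ on the other.

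First I would apply \cref{thm2.1} verbatim. With $\Gamma^* = P$ and $\Gamma = Q^\vee$ as fixed at the start of \cref{section4}, the theorem gives
\[
c(\lambda, r^+(\eta)) = \chi_{\mathfrak{c}}(s[\eta,\Gamma](\lambda)) = \sum_{p \in RSP(\eta)} \chi_{\mathfrak{c}}(\textrm{Tres}(e^{-\lambda(p-z)}\eta(p-z))),
\]
provided $\lambda \in \Gamma^*$ satisfies $(\lambda + N(\eta)^\circ) \cap \mathfrak{c} \neq \emptyset$. So the first task is to check this support condition. Writing $\eta$ with numerator $\sum_{\zeta} c_\zeta e^\zeta$ and denominator $\prod_{i=1}^r (1-u_i e^{\alpha_i})$ (each positive root appearing with multiplicity $2$ among the $\alpha_i$, since the two linear factors $(1-q^{-1}t(\alpha))(1-qt(\alpha))$ in the denominator contribute two copies of each $\alpha$), one computes $N(\eta)$ from \cref{def2.6}-style box constraints. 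Because $\lambda \in \mathfrak{c} \cap Q^+$ and the numerator exponents $\zeta$ lie in the box $\sum_i [0,1]\alpha_i$, I would argue that $0 \in N(\eta)$ (indeed $\mu = 0$ satisfies $\zeta \in \sum[0,1]\alpha_i$ for every $\zeta$ with $c_\zeta \neq 0$, as these $\zeta$ are partial sums of the $\alpha_i$), hence $N(\eta)^\circ$ is a neighborhood of an interior point and $(\lambda + N(\eta)^\circ)\cap\mathfrak{c}\neq\emptyset$ for $\lambda \in \mathfrak{c}$ by openness of $\mathfrak{c}$.

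Next I would identify $RSP(\eta)$ with the set of residual points. By definition, $p$ contributes to $RSP(\eta)$ (modulo $2\pi i Q^\vee$) when the collection of $\alpha_i$ with $e^{\alpha_i(p)} = u_i^{-1}$ spans $V^*_{\CC}$. Here the $u_i$ are $q^{-1}$ and $q$ (each $\alpha$ appearing once with each value), so $e^{\alpha_i(p)} = u_i^{-1}$ means $t_p(\alpha) = q$ or $t_p(\alpha) = q^{-1}$ for the positive root $\alpha = \alpha_i$. I would match this spanning condition against the residual-point condition $\#\{\alpha : t(\alpha)=1\} - \#\{\alpha : t(\alpha) = q\} \geq \text{rank}(\Phi)$: the point is that by \cref{lem2.1}, a term $\textrm{Tres}(e^{-\lambda(p-z)}\eta(p-z))$ is nonzero only when the roots $\alpha$ for which the relevant factor of $\eta(p-z)$ has $u=1$ — i.e. for which $t_p(\alpha) = 1$ in the numerator factors, accounting also for cancellations with numerator zeros $(1-e^{\alpha(z)})$ — form a spanning set, equivalently a basis, of $V^*_{\CC}$. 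Using the $W$-invariance of $\eta$ and the classification in \cref{thm2.3}, every such contributing $p$ is $W$-conjugate to a residual point $t = cs$, and conversely residual points are exactly those $t_p$ at which the total residue can be nonzero. Thus the nonvanishing terms in the $RSP(\eta)$ sum are precisely those indexed by residual points $t_p$, and I would restrict the sum accordingly, discarding the (zero) contributions from non-residual poles via \cref{lem2.1}(1).

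The main obstacle I anticipate is the bookkeeping in the last step: translating the abstract spanning/basis condition of \cref{lem2.1} — which concerns the $\alpha_i$ with $u_i = 1$ \emph{after} the shift $z \mapsto p - z$ and after folding in the numerator factors $(1-e^{\alpha(z)})$ — into the clean combinatorial residual-point condition of the definition preceding \cref{thm2.3}. One must carefully track how the exponentials $e^{\alpha_i(p-z)} = e^{\alpha_i(p)}e^{-\alpha_i(z)}$ reset each factor's constant $u_i \mapsto u_i e^{\alpha_i(p)}$, so that $u_i e^{\alpha_i(p)} = 1 \iff t_p(\alpha_i) = u_i^{-1} \in \{q, q^{-1}\}$, and to confirm that the "$u=1$ factors span" criterion of \cref{lem2.1} coincides exactly with $t_p$ being residual. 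Once this correspondence is pinned down, the theorem follows immediately by deleting vanishing summands from the Szenes–Vergne formula.
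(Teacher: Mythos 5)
Your second step (discarding non-residual poles from the $RSP$ sum via \cref{lem2.1}) is essentially the paper's argument and is fine. The genuine gap is in your first step: you apply \cref{thm2.1} \emph{verbatim} to $F=\eta$, and your verification of the support hypothesis is false. It is true that $0\in N(\eta)$, but it does not follow that $N(\eta)^{\circ}\neq\emptyset$; in fact $N(\eta)=\{0\}$. Indeed, the numerator of $\eta$ contains the term $\zeta=0$ (the constant $1$) \emph{and} the top term $\zeta_{\max}=\sum_{i=1}^{r}\alpha_i=\sum_{\alpha\in\Phi^+}2\alpha$, both with nonzero coefficient, so
\[
N(\eta)\ \subseteq\ \Bigl(\sum_{i=1}^r[0,1]\alpha_i\Bigr)\cap\Bigl(\sum_{i=1}^r[0,1]\alpha_i-\zeta_{\max}\Bigr)\ \subseteq\ C(\Delta)\cap\bigl(-C(\Delta)\bigr)=\{0\},
\]
since the box lies in the pointed cone $C(\Delta)$ and its translate by $-\zeta_{\max}$ lies in $-C(\Delta)$. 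Hence $(\lambda+N(\eta)^{\circ})\cap\mathfrak{c}$ is empty for every $\lambda$, and \cref{thm2.1} simply cannot be invoked for $\eta$ as a whole; your derivation of the identity $c(\lambda,r^+(\eta))=\sum_{p\in RSP(\eta)}\chi_{\mathfrak{c}}(\textrm{Tres}(\cdots))$ therefore has no justification as written.

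The missing idea — and the way the paper proceeds — is to first expand the numerator of $\eta$ and apply \cref{thm2.1} \emph{term by term}: each summand $F_{i_1\cdots i_k}=\pm e^{(\alpha_{i_1}+\cdots+\alpha_{i_k})}/\prod_{i=1}^r(1-u_ie^{\alpha_i})$ has $N(F_{i_1\cdots i_k})=\bigl(\sum_{i=1}^r[0,1]\alpha_i\bigr)-(\alpha_{i_1}+\cdots+\alpha_{i_k})$, which is a full-dimensional body containing $0$, so for $\lambda\in\mathfrak{c}$ the openness of $\mathfrak{c}$ does give $(\lambda+N(F_{i_1\cdots i_k})^{\circ})\cap\mathfrak{c}\neq\emptyset$. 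One then sums the resulting identities: since all summands share the same denominator, they share the same reduced set of poles $RSP$, and linearity of $c(\lambda,r^+(\cdot))$ and of $\textrm{Tres}$ reassembles the numerator, yielding $c(\lambda,r^+(\eta))=\sum_{p\in RSP}\chi_{\mathfrak{c}}(\textrm{Tres}(e^{-\lambda(p-z)}\eta(p-z)))$. After that, your restriction from $RSP$ to residual points goes through as you describe (the paper phrases it as requiring the pole order $i_p$ minus the numerator vanishing order $z_p$ at $p$ to be at least $n$, which is exactly Opdam's residual-point condition); your appeal to $W$-invariance and \cref{thm2.3} there is not needed, only the vanishing statement of \cref{lem2.1}.
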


 \begin{proof}

 We will show that one can apply  \cref{thm2.1} to $\frac{\sum_{i_1<i_2 \cdots <{i_k}} \pm 1 e^{(\alpha_{i_1} + \cdots + \alpha_{i_k})(z)}}{\Pi_{i =1 }^r(1-u_ie^{\alpha_i(z)})}$ .  We have $ 0 \in N(\frac{e^{(\alpha_{i_1} + \cdots \alpha_{i_j})(z)}}{\Pi_{i =1 }^r(1-u_ie^{\alpha_i(z)})}) = (\sum _{i=1}^r [0,1] \alpha_{i})- (\alpha_{i_1} + \cdots \alpha_{i_k})$, since $  \alpha_{i_1} + \cdots \alpha_{i_k} \in \sum _{i=1}^r [0,1] \alpha_{i}$. Furthermore,$(\sum _{i=1}^r [0,1] \alpha_{i})- (\alpha_{i_1} + \cdots + \alpha_{i_k})$ is homeomorphic to a closed disk $D^{n}$ , and thus we have $\lambda \in N(\frac{e^{(\alpha_{i_1} + \cdots \alpha_{i_k})(z)}}{\Pi_{i =1 }^r(1-u_ie^{\alpha_i(z)})}) + \lambda$ and any open neighborhood of $\lambda$ intersects $N(\frac{e^{(\alpha_{i_1} + \cdots \alpha_{i_k})(z)}}{\Pi_{i =1 }^r(1-u_ie^{\alpha_i(z)})})^{\circ}$. In particular,  since $\mathfrak{c}$ is an open set containing $\lambda$ , $\mathfrak{c} \cap N(\frac{e^{(\alpha_{i_1} + \cdots \alpha_{i_k})(z)}}{\Pi_{i =1 }^r(1-u_ie^{\alpha_i(z)})})^{\circ} \neq \phi $.

 Thus, we can apply \cref{thm2.1} to $\frac{e^{(\alpha_{i_1} + \cdots \alpha_{i_k})(z)}}{\Pi_{i =1 }^r(1-u_ie^{\alpha_i(z)})}.$

\begin{equation*}
    \begin{split}
    c(\lambda,r^+(\eta)) &=\sum_{i_1<i_2 \cdots < {i_k}} c(\lambda,r^{+}(\frac{\pm e^{(\alpha_{i_1} + \cdots \alpha_{i_k})(z)}}{\Pi_{i =1 }^r(1-u_ie^{\alpha_i(z)})}))\\
    &= \sum_{i_1<i_2 \cdots < {i_k}} \sum_{p \in RSP} \chi_{\mathfrak{c}}(\textrm{Tres}(e^{-\lambda(p-z)}\frac{\pm e^{(\alpha_{i_1} + \cdots \alpha_{i_k})(p-z)}}{\Pi_{i =1 }^r(1-u_ie^{\alpha_i(p-z)})})) \\
    &= \sum_{p \in RSP} \chi_{\mathfrak{c}}(\textrm{Tres}(e^{-\lambda(p-z)} \frac{ (\sum_{i_1<i_2 \cdots < {i_k}} \pm e^{(\alpha_{i_1} + \cdots \alpha_{i_k})(p-z)}}{\Pi_{i =1 }^r(1-u_ie^{\alpha_i(p-z)})}))\\
    &= \sum_{p \in RSP}{\chi_{\mathfrak{c}}(\textrm{Tres}(e^{-\lambda(p-z)}\eta(p-z))}.
    \end{split}
\end{equation*} 

Suppose $p$ is in $RSP$.  $\prod_{i=1}^r(1-u_ie^{\alpha_{i}(p-z)})$ has a zero of some order $ i_p \geq n$. Additionally, $\prod_{i=1}^r(1-e^{\alpha_{i}(p-z)})$  would vanish at zero with order $z_p$. For $\chi_{\mathfrak{c}}(\textrm{Tres}(e^{-\lambda(p-z)}\eta(p-z))$ to be non zero  we must have $i_p - z_p \geq n$. If we define $t_p\in Hom(\Gamma^*, \CC^*) $ by $t_p(\alpha) = e^{\alpha(p)}$, this means that $t_p$ is a Residual Point in the sense of \cite{op2}. Thus , we can restrict the sum to residual points.
 \end{proof}

\subsection{Type A}
Let us specialize to the case of the root system $A_{n-1} \subset V^*$, where $V^* \subset \CC^n$ is the subspace of all vectors whose coordinates add up to zero, we choose $\Phi^{+}_n = \{\alpha_{i,j} = e_j - e_i | 1 \leq j <i \leq n\}$. The simple roots are $\alpha_{i} = \alpha_{i,i+1}$ for $1 \leq i <n$. Associated to a simple root $\alpha_i$ is the reflection $s_i$ across its normal hyperplane. 

The $s_i$ generate a group $W(A_{n-1})$ isomorphic to $S_n$, acting on the indices of $\CC_n$. We also have negative roots $\Phi^{-}_n = \{\alpha_{i,j} = e_i - e_j | 1 \leq j <i \leq n\}$ and together we have $\Phi_n = \Phi^{+}_n \cup \Phi^{-}_n$.  For a general root $\alpha$, $|\alpha|$ will represent the corresponding positive root. For a permutation $w$,  Let $\epsilon_n(w)$ be the number of simple roots in $\Phi_{n}^{+}$ that are sent to positive roots by $w$. 

\subsection{Calculating $c(\lambda,\eta)$}
For $i,j$ we define \[
\Omega_{i,j} = \Omega(\alpha_{i,j}) = (\frac{(q^{j-i}-1)^2}{(q^{j-i-1}-1)(q^{j-i+1}-1)} )^*,
\]
where $()^*$ means a product of only non-zero factors.
We have\[ \Omega_{i,j} = \Omega_{j,i} \; |j-i|>1,\] \[\Omega_{i,i+1} = - \Omega_{i+1,i}.\]
We also define \[\Omega(A_{n-1}) = \prod_{1 \leq i<j \leq n} \Omega_{ij}.\]

We can use \cref{thm2.3} to describe the residual points in type $A$.

For $A_{n-1}$, the corresponding simply connected Lie group $G$ is $SL_n(\CC)$ and its maximal torus $T$ consists of diagonal matrices. Since there is no proper subgroup of maximal dimension,  if $t = cs$ is a residual point, $C_{\frak{g}}(s) = \frak{sl}_n(\CC)$ and the only possible $s$ are just the points given by the $n$ scalar matrices $e^{r_j}$ in $SL_n(\CC)$, where $r_j = (\frac{2\pi j\sqrt{-1}}{n},\cdots,\frac{2\pi j \sqrt{-1}}{n})$. The Lie algebras $\frak{sl}_{n}(\CC)$ has a single distinguished nilpotent class with the corresponding weighted Dynkin diagram having weights $2$ on each node. Let $h$ be the corresponding element of $Q^\vee$ such that  $\alpha_i(h) = 2,\text{ } \forall 1 \leq i \leq n-1 $ .
Thus, the residual points are given by  $t_{p_w}s_j= e^{r_j+p_w}$ where $w \in W(A_{n-1})$ and $p_w = e^{\frac{k(wh)}{2}}$. The corresponding root system $\Phi_{s,1} = \Phi(\mathfrak{sl}_n) = A_{n-1} = \Phi$  and thus $e^{r_j(\alpha)} = 1$ for every $1 \leq j \leq n$ and $\alpha \in A_{n-1}$. 

\begin{defn}
    For a big chamber $\mathfrak{c}$ of $C(\Phi_{n}^+)$, we define  
    $W(\mathfrak{c}, A_{n-1}) = \{w \in W(A_{n-1}) |  \mathfrak{c} \subset C(|w\alpha_1|,\cdots,|w\alpha_{n-1}|)\}.$
\end{defn}

\begin{prop}
\label{thm4.2}
\
Suppose $\mathfrak{c}$ is a big chamber of $C(\Phi^{+}_n)$.

For  $\lambda \in \mathfrak{c} \cap Q^{+}$,
\[c(\lambda, r^+(\eta)) = \sum_{w \in W(\mathfrak{c}, A_{n-1}) } (-1)^{\epsilon_n(w)}e^{-\lambda(p_{w})}\Omega(A_{n-1}).\]

\end{prop}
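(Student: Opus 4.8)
The plan is to start from \cref{thm3.1}, which already writes $c(\lambda,r^+(\eta))$ as a sum of $\chi_{\mathfrak c}(\textrm{Tres}(e^{-\lambda(p-z)}\eta(p-z)))$ over the residual points $t_p$, and to evaluate each summand using the explicit description of type $A$ residual points recalled above. There the residual points are the $p_{(w,j)} = r_j + \tfrac{k\,wh}{2}$ with $w \in W(A_{n-1})$ and $1 \le j \le n$, and they satisfy $t_{p_{(w,j)}}(\alpha) = q^{\mathrm{ht}(w^{-1}\alpha)}$ for every $\alpha \in \Phi_n$, where $\mathrm{ht}$ denotes the signed height. Since every root vanishes on the scalar direction $r_j$, and since $\lambda \in Q^+$ lies in the root lattice, both $\eta(p-z)$ and $e^{-\lambda(p)}$ are independent of $j$; hence the summand depends only on $w$, and I would group the $|P/Q| = n$ residual classes sharing a given $w$ together.

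First I would pin down the pole structure of $\eta(p_{(w,j)}-z)$ at $z=0$. Writing $\eta(p-z) = \prod_{\alpha \in \Phi_n^+}\frac{(1-t_p(\alpha)e^{-\alpha(z)})^2}{(1-q^{-1}t_p(\alpha)e^{-\alpha(z)})(1-q\,t_p(\alpha)e^{-\alpha(z)})}$, a denominator factor attached to $\alpha$ with parameter $u \in \{q,q^{-1}\}$ is singular at $z=0$ exactly when $u\,t_p(\alpha)=1$; since $t_p(\alpha)=q^{\mathrm{ht}(w^{-1}\alpha)}$, this happens for $u=q^{-1}$ iff $w^{-1}\alpha$ is a positive simple root and for $u=q$ iff $w^{-1}\alpha$ is a negative simple root. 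Hence for each simple index $i$ exactly one factor is singular, its underlying positive root being $|w\alpha_i|$, so the singular set is the basis $\sigma_w := \{\,|w\alpha_1|,\dots,|w\alpha_{n-1}|\,\}$; meanwhile every numerator factor is nonzero at $z=0$ because $t_p(\alpha)=q^{\mathrm{ht}(w^{-1}\alpha)}\ne 1$. Thus \cref{lem2.1}(2) applies and gives $\textrm{Tres}(e^{-\lambda(p-z)}\eta(p-z)) = c_w\, f_{\sigma_w}$, where $c_w$ equals $e^{-\lambda(p_w)}$ times the product over $\alpha \in \Phi_n^+$ of the $\eta$-factors evaluated at $t_p(\alpha)=q^{\mathrm{ht}(w^{-1}\alpha)}$ with the would-be singular factors deleted, exactly matching the $(\,)^*$ convention defining $\Omega_{ij}$.

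The heart of the argument is to identify $c_w$ with $(-1)^{\epsilon_n(w)}e^{-\lambda(p_w)}\Omega(A_{n-1})$. For a non-singular $\alpha$ the deleted-factor $\eta$-factor is exactly $\Omega(\mathrm{ht}(w^{-1}\alpha))$, while for a singular $\alpha$ (height $\pm 1$) a short computation gives $\pm\Omega(1)$, the sign being $-1$ precisely when $\mathrm{ht}(w^{-1}\alpha)=+1$, i.e.\ when $\alpha=w\alpha_i$ with $w\alpha_i>0$. Using the evenness $\Omega(m)=\Omega(-m)$ (a direct check) together with the fact that $\alpha \mapsto |w^{-1}\alpha|$ is a bijection of $\Phi_n^+$, the product of the $\Omega$-magnitudes is $\prod_{\beta \in \Phi_n^+}\Omega(\mathrm{ht}(\beta)) = \Omega(A_{n-1})$, and the accumulated signs contribute $(-1)^{\#\{i\,:\,w\alpha_i>0\}}=(-1)^{\epsilon_n(w)}$. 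Finally I would apply $\chi_{\mathfrak c}$: since $\chi_{\mathfrak c}(f_{\sigma_w})$ is nonzero exactly when $\mathfrak c \subset C(\sigma_w)=C(|w\alpha_1|,\dots,|w\alpha_{n-1}|)$, that is when $w \in W(\mathfrak c, A_{n-1})$, summing the $n$ coincident $j$-classes against the normalization of $\chi_{\mathfrak c}$ leaves precisely the coefficient $(-1)^{\epsilon_n(w)}e^{-\lambda(p_w)}\Omega(A_{n-1})$ for each such $w$, which is the claimed identity.

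I expect the main obstacle to be the careful sign and normalization bookkeeping in the last two steps: verifying that the deleted zero factors in \cref{lem2.1} match the $(\,)^*$ convention defining $\Omega_{ij}$, that the sign flips occur at exactly the simple roots counted by $\epsilon_n(w)$ (so that one obtains $(-1)^{\epsilon_n(w)}$ and not its complement), and that the $|P/Q|=n$ residual classes attached to each $w$ combine with the volume factor implicit in $\chi_{\mathfrak c}$ to give overall coefficient $1$. The remaining structural points — the $j$-independence of the summand and the reduction to a single basis $\sigma_w$ per $w$ — are forced by the description of the residual points.
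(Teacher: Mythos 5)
Your proposal is correct and takes essentially the same route as the paper's own proof: both start from \cref{thm3.1}, use the type $A$ description of residual points to group the $n$ coincident classes attached to each $w \in W(A_{n-1})$, apply \cref{lem2.1} to the basis $\{|w\alpha_1|,\dots,|w\alpha_{n-1}|\}$ of singular denominator factors, and perform the same sign and volume bookkeeping (your evenness of $\Omega$ and sign flips at simple roots is exactly the paper's $\Omega_{i,j}=\Omega_{j,i}$ for $|i-j|>1$ and $\Omega_{i,i+1}=-\Omega_{i+1,i}$) to obtain the summand $(-1)^{\epsilon_n(w)}e^{-\lambda(p_w)}\Omega(A_{n-1})$ for $w\in W(\mathfrak{c},A_{n-1})$. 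The only difference is presentational: you handle general $w$ directly via signed heights, whereas the paper first computes the residue at $p_{id}$ and then transports it to $p_w$.
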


\begin{proof}
    To use  \cref{thm3.1} in order to calculate $c(\lambda,r^{+}(\eta))$, we must first calculate the total residue of $\eta$ at the residual points. We first consider residual  points of the form $s_jt_{p_{id}}$,  where $s_j(\alpha) = e^{r_{j}(\alpha)}= 1 $ for all $\alpha$, and $t_{\frac{kh}{2}= p_{id} }=$ $e^{\frac{kh}{2}}$, where $t_{p_{id}}(\alpha_{i}) = q$ for all $i$. We see that  \[e^{-\lambda(r_{j}+p_{id}-z)}F(r_{j}+ p_{id}-z) = e^{-\lambda(p_{id}-z)}F( p_{id}-z),\] and in particular

\[\textrm{Tres}(e^{-\lambda(r_{j}+p_{id}-z)}F(r_{j}+ p_{id}-z)) = \textrm{Tres}(e^{-\lambda(p_{id}-z)}F( p_{id}-z)).\]
Since $\alpha_{i,j} = \sum_{k=i}^{j-1} \alpha_{k}$,  $t_{p_{id}}(\alpha_{ij}) =  q^{j-i}$ and 

 \begin{equation*}
    \begin{split}
    e^{-\lambda(p_{id}-z)}F( p_{id}-z) &= e^{-\lambda( p_{id}-z)}\prod_{1 \leq i < j \leq n} \frac{(1-e^{\alpha_{ij}(p_{id}-z)})^2}{(1-q^{-1}e^{\alpha_{ij}(p_{id}-z)})(1-qe^{\alpha_{ij}( p_{id}-z)})}\\ &= e^{-\lambda(p_{id})}e^{\lambda(z)} \prod_{1 \leq i < j \leq n} \frac{(1-q^{j-i}e^{\alpha_{ij}(-z)})^2}{(1-q^{j-i-1}e^{\alpha_{ij}(-z)})(1-q^{j-i+1}e^{\alpha_{ij}(-z)})}.\\
    \end{split}
\end{equation*} 

The factors of  $e^{-\lambda(p_{id}-z)}F( p_{id}-z)$ that vanish at zero are precisely $(1-q^{i+1-i-1}\alpha_{i,i+1})$ and since the  $\alpha_{i,i+1}$ form a basis, we can apply \cref{lem2.1} and conclude that
\begin{equation}
\label{equation*4.1}
\begin{split}
&\textrm{Tres}(e^{-\lambda(p_{id}-z)}F(p-z))\\
&= \frac{1}{\prod \alpha_{i,i+1}(z)}[e^{-\lambda(p_{id})}e^{\lambda(z)}\prod \alpha_{i,i+1}(z)\prod_{1 \leq i < j \leq n} \frac{(1-q^{j-i}e^{\alpha_{ij}(-z)})^2}{(1-q^{j-i-1}e^{\alpha_{ij}(p-z)})(1-q^{j-i+1}e^{\alpha_{ij}(z)})}]_0 \\ 
&=  \frac{1}{\prod\alpha_{i}(z)} e^{-\lambda(p_{id})}\prod_{1 \leq i<i+1 < j \leq n} \frac{(1-q^{j-i})^2}{(1-q^{j-i-1})(1-q^{j-i+1})}\prod_{1 \leq i < n }\frac{(1-q)^2}{1(1-q^2)} \\ 
&=  \frac{e^{-\lambda(p_{id})}}{\prod\alpha_{i}(z)} \prod_{1 \leq i<i+1 < j \leq n} \frac{(q^{j-i}-1)^2}{(q^{j-i-1}-1)(q^{j-i+1}-1)}\prod_{1 \leq i < n }\frac{(-1)(q-1)^2}{(q^2-1)}. \\
\end{split}
\end{equation}

We can rewrite \cref{equation*4.1} as
\[\textrm{Tres}(e^{-\lambda(p_{id}-z)}F(p-z)) = \frac{(-1)^
{n-1}e^{-\lambda(p_{id})}}{\prod\alpha_{i,i+1}(z)} \Omega(A_{n-1}).\]

A general  point of the form $s_jt_{p_{w}}=s_jwt_{p_{id}}$($n$  such points) satisfies $$s_jt_{p_{w}}(w(\alpha_{i}) = \alpha_{w(i)w(i+1)}) = q. $$ Once again, we will have \[
\textrm{Tres}(e^{-\lambda(r_j + p_{w}-z)}F(r_j + p_{w}-z)) = \textrm{Tres}(e^{-\lambda(p_{w}-z)}F(p_{w}-z)).
\]
 If $w(\alpha_{i}) < 0 $, then in terms of $|\alpha_{w(i),w(i+1)}| \in \Phi^{+}$  we have \[s_jt_{p_{w}}(|\alpha_{w(i),w(i+1)}| = \alpha_{w(i+1),w(i)}) = q^{-1}.\] In general, for $i < j$  \[s_jt_{p_{w}}(\alpha_{ij} = \alpha_{w(w^{-1})(i)w(w^{-1})(j)}) = t_{p_{id}}(\alpha_{w^{-1}(i)w^{-1}(j)}) =  q^{w^{-1}(j)-w^{-1}(i)}.\]
The  positive roots  for which the corresponding denominator factors of   $e^{-\lambda(p_{w}-z)}F(p_w-z))$  vanish at will be of the form $|w\alpha_{i}|$, $1 \leq i\leq n-1$. Similar to $p_{id}$, we have 

\begin{equation*}
\begin{split}
&\textrm{Tres}(e^{-\lambda(p_{w}-z)}F(p_{w}-z))\\
&=  \frac{e^{-\lambda(p_{w})}}{\prod |\alpha_{w(i),w(i+1)}|}\prod_{1\leq i < j \leq n} \Omega_{w^{-1}(i)w^{-1}(j)}\\
&= \frac{e^{-\lambda(p_{w})}}{\prod |\alpha_{w(i),w(i+1)}|}\prod_{1\leq i+1<j\leq n} \Omega_{ij}\prod_{1\leq i< n} (-1)^{sgn(w(i) -w(i+1))}\Omega_{ij}\\ 
&= \frac{(-1)^{\epsilon_n(w)}e^{-\lambda(p_{w})}}{\prod |w\alpha_{i}|}\Omega(A_{n-1}).\\
\end{split}
\end{equation*}

Finally, note that the volume of the parallepiped formed by the simple roots in the weight lattice  is the determinant of the Cartan matrix $(\langle \alpha_{i}^{\vee}, \alpha_{j}\rangle)$ of type $A_{n-1}$. (It is the matrix of the change of basis between the fundamental weights and the roots.) Thus, $vol(\text{Box(}\alpha_1, \ldots, \alpha_{n-1})) = n$.  The image of this parallepiped under any $w \in A_{n}$ will also have (absolute) volume = $n$ since $w$ is orthogonal and has determinant $\pm 1$.

Putting everything together:
\begin{equation*}
    \begin{split}
    c(\lambda,r^+(\eta)) 
    &= \sum_{t_p \text{ is a residual point}}{\chi_{\mathfrak{c}}(\textrm{Tres}(e^{-\lambda(p-z)}\eta(p-z))}\\
        &= \sum_{w \in A_{n-1}}\sum_{j=1}^n {\textrm{Tres}(e^{-\lambda(r_j + p_{w}-z)}F(r_j + p_{w}-z))}\\
    &= \sum_{w \in A_{n-1}}\sum_{k=1}^n {\chi_{\mathfrak{c}}(\frac{(-1)^{\epsilon_n(w)}e^{-\lambda(p_{w})}}{\prod |w\alpha_{i}|(z)}\prod_{1\leq i<j\leq n} \Omega_{ij})}\\
    &=  \sum_{w \in W(\mathfrak{c}, A_{n-1})}(-1)^{\epsilon_n(w)}e^{-\lambda(p_{w})}\Omega(A_{n-1}).\\
    \end{split}
\end{equation*} .

\end{proof}

\begin{lem}
        $\Omega(A_{n-1}) = \Omega_{n-1}(A_{n-2})(\frac{(q-1)(q^{n-1}-1)}{(q^{n}-1)}) = \frac{(q-1)^{n}}{(q^{n}-1)}.$
    \end{lem}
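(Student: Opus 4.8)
The plan is to prove the two equalities separately: the first (the recursion) by isolating the positive roots incident to the last index $n$, and the second (the closed form) by induction on $n$ using that recursion. First I would record the only structural fact needed, namely that each factor $\Omega_{ij}$ depends solely on the difference $d = j-i$. Writing $\Omega^{(d)}$ for this common value, one has $\Omega^{(d)} = \frac{(q^d-1)^2}{(q^{d-1}-1)(q^{d+1}-1)}$ for $d \geq 2$, whereas for $d = 1$ the $()^*$ convention discards the vanishing factor $q^0 - 1$, leaving $\Omega^{(1)} = \frac{(q-1)^2}{q^2-1}$.

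Next I would split the product $\Omega(A_{n-1}) = \prod_{1 \leq i < j \leq n} \Omega_{ij}$ according to whether the index $n$ appears. The factors $\Omega_{ij}$ with $j \leq n-1$ are exactly those attached to the sub-root-system of type $A_{n-2}$ supported on the indices $\{1,\dots,n-1\}$, and since $\Omega_{ij}$ depends only on $j-i$ these reassemble into the same product for the smaller system; this is the quantity written $\Omega_{n-1}(A_{n-2}) = \Omega(A_{n-2})$. The remaining factors are $\Omega_{i,n}$ for $i = 1,\dots,n-1$, so $\Omega(A_{n-1}) = \Omega_{n-1}(A_{n-2}) \cdot \prod_{i=1}^{n-1}\Omega_{i,n}$.

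The key computation is the incident-to-$n$ product. As $i$ runs over $1,\dots,n-1$ the difference $d = n-i$ runs over $1,\dots,n-1$, so $\prod_{i=1}^{n-1}\Omega_{i,n} = \prod_{d=1}^{n-1}\Omega^{(d)}$. I would pull out the boundary term $\Omega^{(1)} = \frac{(q-1)^2}{q^2-1}$ and telescope the two products $\prod_{d=2}^{n-1}\frac{q^d-1}{q^{d-1}-1} = \frac{q^{n-1}-1}{q-1}$ and $\prod_{d=2}^{n-1}\frac{q^d-1}{q^{d+1}-1} = \frac{q^2-1}{q^n-1}$, obtaining $\prod_{d=1}^{n-1}\Omega^{(d)} = \frac{(q-1)^2}{q^2-1}\cdot\frac{(q^{n-1}-1)(q^2-1)}{(q-1)(q^n-1)} = \frac{(q-1)(q^{n-1}-1)}{q^n-1}$, which is precisely the advertised factor. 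This establishes the first equality.

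Finally I would induct on $n$. The base case $n = 2$ gives $\Omega(A_1) = \Omega^{(1)} = \frac{(q-1)^2}{q^2-1}$, matching $\frac{(q-1)^n}{q^n-1}$; and assuming $\Omega(A_{n-2}) = \frac{(q-1)^{n-1}}{q^{n-1}-1}$, the recursion yields $\Omega(A_{n-1}) = \frac{(q-1)^{n-1}}{q^{n-1}-1}\cdot\frac{(q-1)(q^{n-1}-1)}{q^n-1} = \frac{(q-1)^n}{q^n-1}$, the cancellation of $q^{n-1}-1$ being exactly what the recursion is designed to produce. The only genuinely delicate point is the treatment of the $d = 1$ boundary factor in the telescoping product, where the $()^*$ convention removes a would-be zero and forces one to handle that term by hand; every other step is routine bookkeeping with geometric-type cancellations.
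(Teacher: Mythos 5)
Your proof is correct and follows essentially the same route as the paper: both split $\Omega(A_{n-1})$ into $\Omega(A_{n-2})$ times the factors $\Omega_{i,n}$ incident to the index $n$, telescope that product (handling the boundary $d=1$ factor via the $()^*$ convention) to get $\frac{(q-1)(q^{n-1}-1)}{q^n-1}$, and then induct from the base case $\Omega(A_1)=\frac{(q-1)^2}{q^2-1}$. Your write-up merely makes the telescoping more explicit than the paper's one-line computation.
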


    \begin{proof}
        \begin{equation*}
            \begin{split}
                \Omega(A_{n-1}) &= \prod_{1\leq i<j\leq n} \Omega_{ij}\\
                &= \Omega(A_{n-2})(\prod_{1\leq i < n-1}\frac{(q^{n-i}-1)^{2}}{(q^{n-i+1}-1)(q^{n-i-1}-1)})(\frac{(q-1)^2}{(q^2-1)}\\
                &=\Omega(A_{n-2})\frac{(q-1)(q^{n-1}-1)}{(q^n-1)}.\\
            \end{split}
        \end{equation*}
        Since $\Omega(A_{1}) = \frac{(q-1)^2}{q^2-1}$,  by induction, $\Omega(A_{n-1} ) = \frac{(q-1)^n}{q^n-1}$.
    \end{proof}

\begin{defn}
    For $1 < m < n-1 $, 
    We define 
    \begin{equation*}
        \begin{split}
            \mathfrak{a}_{n-1}^m &=  C(\alpha_{1,2}, \ldots, \alpha_{1,n-m},  \alpha_{n-1,n}, \ldots , \alpha_{n-(m-1), n},\alpha_{1,n} )\\
             %C(\beta_1 = \alpha_{1,2}, \cdots, \beta_{n-m-1} = \alpha_{1,n-m}, \beta_{n- m} = \alpha_{n-1,n}, \cdots , \beta_{n-2} = \alpha_{n-(m-1), n}, \beta_{n-1} = \alpha_{1,n} )  \\
            &= \{  \lambda = \sum_{i=1}^{n-1} a_{i} \alpha_{i,i+1} | a_{1} \geq a_{2} \cdots \geq a_{n-m} \leq a_{n-m +1 } \cdots \leq  a_{n-1}  \}.
        \end{split}
    \end{equation*}

Additionally,
    \begin{equation*}
    \begin{split}
         \mathfrak{a}_{n-1}^{1} &= C(\beta_{1} = \alpha_{1, 2}; \cdots , \beta_{n-2} = \alpha_{1, n-1}; \beta_{n-1} = \alpha_{1,n}) \\
        &= \{  \lambda = \sum_{i=1}^{n-1} a_{i} \alpha_{i,i+1} | a_{1} \geq a_{2} \cdots  \cdots \geq  a_{n-1}  \},\\
    \end{split}
    \end{equation*} 
and 
\begin{equation*}
\begin{split}
\mathfrak{a}_{n-1}^{n-1} &= C(\beta_{1} = \alpha_{n-1, n}, \cdots , \beta_{n-2} = \alpha_{2, n}, \beta_{n-1} = \alpha_{1,n}) \\
        &= \{  \lambda = \sum_{i=1}^{n-1} a_{i} \alpha_{i,i+1} | a_{1} \leq a_{2} \cdots  \cdots \leq  a_{n-1}  \}.\\
\end{split}
\end{equation*}

\end{defn}
\begin{comment}
    \textbf{Remark}: 
   An element $c \in A_{n-1}$ is a Coxeter element if \[c = \prod_{k=1}^n s_{i_k}\] where the $s_{i_1},\cdots,s_{i_{n-1}}$ are $s_{1},\cdots,s_{n-1}$ in some order.
 To each Coxeter element $c = \prod_{k=1}^n s_{i_k} $ we can associate positive roots $\beta_1 = \alpha_{i_1}, \beta_2 = s_{i_1}(\alpha_{i_2}), \cdots, \beta_{n-1} = s_{i_1}s_{i_2}\ldots s_{i_{n-2}}(\alpha_{i_{n-1}})$
 Our chambers $\mathfrak{a}_{n-1}^{m}$ are spanned by the roots associated to Coxeter elements of the form. $c = s_1\cdots s_{n-m-1}s_{n-1}\cdots s_{n-m}$.
\end{comment}

\begin{prop}
    The interior of $\mathfrak{a}_{n-1}^m$ is a big chamber for all $0 \leq 1 \leq m$.
\end{prop}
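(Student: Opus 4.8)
The plan is to reduce the proposition to two facts: that $\mathfrak{a}_{n-1}^m$ is a full-dimensional simplicial cone, and that its interior avoids the singular locus $C_{sing}(\Phi^+_n)$. I would first pass to the coordinates $\lambda=\sum_{i=1}^{n-1}a_i\alpha_i \leftrightarrow (a_1,\dots,a_{n-1})$, under which each positive root $\alpha_{i,j}$ (with $i<j$) becomes the indicator vector of the integer interval $[i,j-1]\subseteq\{1,\dots,n-1\}$; in particular the simple roots are the standard basis vectors and $C(\Phi^+_n)=\RR_{\ge 0}^{\,n-1}$. The listed generators of $\mathfrak{a}_{n-1}^m$ then become indicators of a prefix chain $[1,1]\subset\cdots\subset[1,n-m-1]$, a suffix chain $[n-m+1,n-1]\supset\cdots\supset[n-1,n-1]$, and the full interval $[1,n-1]$. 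These $n-1$ vectors form a laminar family, and a short elimination (read off coordinate $n-m$, which only the full interval covers, to kill its coefficient, then peel the two disjoint nested chains) shows they are linearly independent. Inverting the resulting triangular system expresses membership as $a_1\ge\cdots\ge a_{n-m}$, $a_{n-m}\le\cdots\le a_{n-1}$, together with $a_{n-m}\ge 0$; so $\mathfrak{a}_{n-1}^m$ is the claimed simplicial cone and its interior is the locus where all these inequalities are strict. Each facet is the cone on $n-2$ of the generating positive roots, hence $\partial\mathfrak{a}_{n-1}^m\subseteq C_{sing}(\Phi^+_n)$; the only remaining point is to show the interior lies in $C_{reg}(\Phi^+_n)$.

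To control the singular locus I would reinterpret conic representations by positive roots as nonnegative flows, matching the flow picture used to link Kostant partitions to flows on $K_n$. Setting $a_0=a_n:=0$ and $b_k=a_k-a_{k-1}$ for $1\le k\le n$ (so $b_n=-a_{n-1}$ and $\sum_k b_k=0$), a representation $\lambda=\sum_I w_I\,\chi_I$ by intervals $I=[i,j-1]$ with $w_I\ge 0$ is exactly a nonnegative flow on the complete acyclic graph on vertices $1<2<\cdots<n$, where the root $\alpha_{i,j}$ is the forward edge $i\to j$ with flow $w_I$ and $b_k$ is the net flow at vertex $k$. The number of positive roots used equals the number of edges carrying positive flow. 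Thus $\lambda$ is regular precisely when every such flow uses at least $n-1$ edges (it uses at most $n-1$ by the simplicial representation just found).

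The heart of the argument, and the step I expect to be hardest to get cleanly, is the combinatorial claim that for $\lambda$ in the open cone every feasible flow has connected support on all $n$ vertices. Strictness of the valley inequalities forces $b_k\ne 0$ for every $k$, so all $n$ vertices are active. I would prove connectivity in three moves. First, each sink $k\in\{2,\dots,n-m\}$ (where $b_k<0$) has positive inflow, hence a neighbour $i<k$; since such $i$ is again a sink unless $i=1$, iterating gives a path down to vertex $1$, so $\{1,\dots,n-m\}$ lies in one component. Symmetrically, each source $k\in\{n-m+1,\dots,n-1\}$ (where $b_k>0$) has a neighbour $j>k$, giving a path up to vertex $n$, so $\{n-m+1,\dots,n\}$ lies in one component. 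Finally the cut at $r=n-m$ carries net flow $\sum_{k\le n-m}b_k=a_{n-m}>0$, so some positive-flow edge joins $\{1,\dots,n-m\}$ to $\{n-m+1,\dots,n\}$ and fuses the two components. A connected graph on $n$ vertices has at least $n-1$ edges, so any representation of $\lambda$ uses at least $n-1$ positive roots; hence $\lambda\notin C(\sigma)$ for every $\sigma\subset\Phi^+_n$ with $|\sigma|\le n-2$, i.e.\ $\lambda\in C_{reg}(\Phi^+_n)$. The subtlety to stress is that the naive ``every cut is crossed'' condition does \emph{not} imply connectivity (crossing edges can pair up into several components, as happens for the up-then-down chambers), so the sign pattern of the valley --- sinks only in the left interior and sources only in the right interior --- must genuinely be used; the degenerate cases $m=1$ and $m=n-1$ are identical once the empty source/sink ranges are read off.

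It then remains to upgrade ``interior $\subseteq C_{reg}(\Phi^+_n)$'' to ``interior is a big chamber.'' Since $(\mathfrak{a}_{n-1}^m)^\circ$ is an open connected cone contained in $C_{reg}(\Phi^+_n)$, it lies inside a unique connected component $\mathfrak{c}$. Because $\partial\mathfrak{a}_{n-1}^m\subseteq C_{sing}(\Phi^+_n)$ is disjoint from $\mathfrak{c}$, connectedness prevents $\mathfrak{c}$ from crossing the boundary: a point of $\mathfrak{c}$ outside $\overline{\mathfrak{a}_{n-1}^m}$ would force a path inside $\mathfrak{c}$ meeting $\partial\mathfrak{a}_{n-1}^m$, a contradiction. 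Hence $\mathfrak{c}=(\mathfrak{a}_{n-1}^m)^\circ$, which is therefore a big chamber, as claimed.
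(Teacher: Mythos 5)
Your proposal is correct, and its skeleton is the same as the paper's: show that every point of the open cone requires at least $n-1$ positive roots in any nonnegative representation, observe that the boundary of the simplicial cone lies in $C_{sing}(\Phi^+_n)$, and conclude by a connectedness argument that the interior is a full connected component of $C_{reg}(\Phi^+_n)$. The one step you handle differently is the regularity bound. The paper works directly with the coordinates of $v$ in the basis $e_1,\dots,e_n$ (which are exactly your net flows $b_k$): the negative coordinates at $e_2,\dots,e_{n-m}$ and the positive coordinates at $e_{n-m+1},\dots,e_{n-1}$ force $n-2$ distinct roots in the support, and if the support had size exactly $n-2$ then every root in it would avoid $\alpha_{n-m}$ in its simple-root expansion, contradicting $a_{n-m}>0$; your cut argument at $r=n-m$ is precisely this contradiction in flow language. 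You instead upgrade the forced-edge analysis to connectivity of the support graph (sinks chain down to vertex $1$, sources chain up to vertex $n$, the crossing edge fuses the two components) and then invoke the fact that a connected graph on $n$ vertices has at least $n-1$ edges. This buys a slightly stronger conclusion (connected spanning support, not just an edge count), meshes with the paper's own dictionary between Kostant partitions and flows on $K_n$ from Section~\ref{section2}, and your remark about up-then-down sign patterns correctly isolates why the valley shape is essential --- such vectors can genuinely be singular, e.g.\ $\alpha_1+2\alpha_2+\alpha_3=\alpha_{1,3}+\alpha_{2,4}$ in $A_3$. The paper's count-and-contradiction is shorter; your write-up additionally verifies the linear independence of the generators and the inequality description of $\mathfrak{a}_{n-1}^m$, which the paper takes as read in its definition.
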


\begin{proof}
    A vector in the interior of $\mathfrak{a}_{n-1}^m$ is of the form $$v = r_{1}\alpha_{1,2}+ \cdots r_{n-m-1}\alpha_{1, n-m} + a_{n-m} \alpha_{1,n} + r_{n-m+1}\alpha_{n-m+1,n} \cdots  r_{n-1}\alpha_{n-1,n},
    $$ where all the coefficients are non-zero.  The vector $v$ will lie inside a big chamber if and only if $v$ cannot be written as a positive linear combination of less than $n-1$ positive roots.

    Consider a general expansion of v  a positive linear combination of a collection $P$ of positive roots $\sum_{P} a_{\alpha} \alpha = v$. Note that in the standard basis $e_1,\ldots, e_n$, $v$ has negative coordinates in $e_2,\ldots, e_{n-m}$. This implies the existence of a positive root $\alpha_{i,j}$ for each $2 \leq j \leq n-m$ in $P$. \\
    
  Similarly, $v$ has positive coordinates in $e_{n-m+1},\cdots,e_{n-1}$. Thus, we must have positive roots $\alpha_{i,j}$ in $P$ for each $n-m+1 \leq i \leq n-1$.\\
    
Thus, $|P| \geq n-m-1 + m-1 = n-2$. If $|P| = n-2$,  $P \subset C(\Delta \setminus \alpha_{n-m,n-m+1})$ but as a combination of simple roots  $v$ will be supported on $\alpha_{n-m,n-m+1}$. Hence,  $|P| \geq n-1$. 

The boundary of $a_{n-1}^m$ consists of singular vectors and so $(a_{n-1}^m)^{\circ}$ is separated from any  regular vector outside  $(a_{n-1}^m)^{\circ}$, making it a connected component. 
    Thus $(\mathfrak{a}_{n-1}^m)^o$  must be a big chamber.
\end{proof}

Note that we have an involution $-w_{n-1}$ on the positive roots of $\Phi_{n}^{+}$ that sends $\alpha_{i,j}$ to $\alpha_{n+1-j,n+1-i}$. This extends to an involution on $C(\Phi_{n-1}^+)$ that sends $\mathfrak{a}_{n-1}^m$ to $\mathfrak{a}_{n-1}^{n-m}$.
We will denote the sets of permutations $W(\mathfrak{a}_{n-1}^m, A_{n-1})$ by $W^m_{n-1}$.

\subsubsection{Trace formula for $\mathfrak{a}_{n-1}^{1}$}
We will first establish \cref{thm1.1} for $\lambda \in \mathfrak({a}_{n-1}^1)^{\circ} \cap Q^+$
\begin{prop}
\label{prop4.4}

 Let $H$ be the extended affine Hecke algebra of type $A_{n-1}$. If \[\lambda = \sum_{i=1}^{n-1} a_i \alpha_{i} \in (\mathfrak{a}_{n-1}^1)^{\circ} \cap Q^+,\]
then \[\tau(\theta_{-\lambda}) = q^{-(\sum_{i=1}^{n-1}a_i)}(q-1)^{2(n-1)}[a_{n-1}]_{q^n} \Pi_{1 \leq i < n} [(i+1)a_i - ia_{i+1}]_{q}. \]
    
\end{prop}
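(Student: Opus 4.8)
The plan is to evaluate the residue sum of \cref{thm4.2} explicitly for the chamber $\mathfrak{c} = (\mathfrak{a}_{n-1}^1)^{\circ}$. Combining \cref{thm4.2}, the identity $c(\lambda, r^+(\eta)) = \tau(\theta_{-\lambda})$, and the lemma computing $\Omega(A_{n-1}) = (q-1)^n/(q^n-1)$ gives
\[
\tau(\theta_{-\lambda}) = \frac{(q-1)^n}{q^n - 1}\sum_{w \in W^1_{n-1}}(-1)^{\epsilon_n(w)}e^{-\lambda(p_w)},
\]
so everything reduces to understanding the index set $W^1_{n-1} = W((\mathfrak{a}_{n-1}^1)^{\circ}, A_{n-1})$ together with the statistics $\epsilon_n(w)$ and $e^{-\lambda(p_w)}$.

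First I would identify $W^1_{n-1}$. Writing $w$ in one-line notation as $\pi = (w(1),\ldots,w(n))$, the roots $|w\alpha_i|$ are the positive roots attached to the edges $\{w(i),w(i+1)\}$, so $C(|w\alpha_1|,\ldots,|w\alpha_{n-1}|)$ is the cone spanned by the Hamiltonian path $\pi$. Since $\mathfrak{a}_{n-1}^1 = C(\alpha_{1,2},\ldots,\alpha_{1,n})$ is generated by the rays $e_1 - e_j$, the containment $\mathfrak{a}_{n-1}^1 \subset C(|w\alpha_i|)$ holds iff every $e_1 - e_j$ lies in the path cone. As the $n-1$ path edges form a basis of the zero-sum space, $e_1 - e_j$ lies in the cone iff all coefficients of its unique expansion are nonnegative; telescoping $e_1 - e_j$ along the unique path from $j$ to $1$ in $\pi$ shows this holds iff the labels strictly decrease along that path. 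Imposing this for every $j$ forces $\pi$ to be \emph{unimodal with valley $1$}: strictly decreasing until the entry $1$, then strictly increasing. These are indexed by the subset $L \subseteq \{2,\ldots,n\}$ of entries placed on the decreasing side, so $|W^1_{n-1}| = 2^{n-1}$, with the inductive description that the largest entry $n$ is forced to one of the two ends. I expect this cone-containment characterization to be the main obstacle, since it is where the chamber geometry enters.

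Next I would compute the two statistics for $w = w_L$. The ascents of $\pi$ all lie on the increasing side, so $\epsilon_n(w_L)$ equals the number of entries on the right, namely $n-1-|L|$, whence $(-1)^{\epsilon_n(w_L)} = (-1)^{n-1-|L|}$. For the weight, using that $p_w$ represents $\tfrac{k}{2}\,wh$ with $\alpha_i(h)=2$ and that $\sum_j \lambda_j = 0$, one finds $e^{-\lambda(p_w)} = q^{E(L)}$, where $E(L) = \sum_{j}\lambda_j\,\mathrm{pos}_\pi(j)$ and $\lambda_j = a_j - a_{j-1}$ (with $a_0 = a_n = 0$). Abel summation rewrites $E(L) = \sum_{j=1}^{n-1}a_j\big(\mathrm{pos}_\pi(j) - \mathrm{pos}_\pi(j+1)\big)$, and a short case check on the positions in the unimodal permutation (according to whether $j$ and $j+1$ lie on the left or right side) yields the clean identity $E(L) = -\sum_{i=1}^{n-1}a_i + \sum_{j\in L}c_{j-1}$, where $c_i := (i+1)a_i - i a_{i+1}$.

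Finally I would assemble the sum. Setting $c_{n-1} = n a_{n-1}$ (consistent with $a_n = 0$), the shift $L \mapsto T = \{j-1 : j \in L\}$ is a cardinality-preserving bijection between subsets of $\{2,\ldots,n\}$ and subsets of $\{1,\ldots,n-1\}$, so
\[
\sum_{w \in W^1_{n-1}}(-1)^{\epsilon_n(w)}e^{-\lambda(p_w)} = q^{-\sum_{i=1}^{n-1}a_i}\sum_{T \subseteq \{1,\ldots,n-1\}}(-1)^{\,n-1-|T|}q^{\sum_{i\in T}c_i} = q^{-\sum_{i=1}^{n-1}a_i}\prod_{i=1}^{n-1}\big(q^{c_i}-1\big).
\]
Multiplying by $\Omega(A_{n-1}) = (q-1)^n/(q^n-1)$ and regrouping the factor $q^{c_{n-1}}-1 = q^{n a_{n-1}}-1$ as $(q^n-1)[a_{n-1}]_{q^n}$ together with $q^{c_i}-1 = (q-1)[(i+1)a_i - i a_{i+1}]_q$ for $1 \le i \le n-2$ recovers the stated product formula, completing the proof.
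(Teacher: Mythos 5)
Your proposal is correct, and it reaches the formula by a genuinely different route than the paper's. Both arguments share the same starting point — reduce via \cref{thm4.2} and the identity $\Omega(A_{n-1}) = (q-1)^n/(q^n-1)$ to evaluating $\sum_{w \in W^1_{n-1}} (-1)^{\epsilon_n(w)} e^{-\lambda(p_w)}$ — but from there the paper proceeds by induction on $n$: \cref{lem3.2} describes $W^1_{n-1}$ only recursively as $W^1_{n-2} \cup W^1_{n-2}w_{n-1}$ (its proof shows the value $n$ must sit at one of the two ends of the one-line notation, which is exactly your inductive remark), \cref{lem4.3} tracks how the sign and exponent transform for the four rank-$(n+1)$ elements attached to each $w \in W^1_{n-2}$, and the product formula is then assembled by induction from the base cases $n=2,3$. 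You instead solve everything in closed form: the Hamiltonian-path/telescoping argument identifies $W^1_{n-1}$ outright as the $2^{n-1}$ unimodal permutations with valley $1$, indexed by subsets $L \subseteq \{2,\ldots,n\}$; the statistics become $\epsilon_n(w_L) = n-1-|L|$ and $E(L) = -\sum_i a_i + \sum_{j\in L} c_{j-1}$ (this exponent identity is correct: $\mathrm{pos}_L(j)-j$ is additive over the singletons of $L$, and for $L=\{v\}$ it telescopes to $c_{v-1} = va_{v-1}-(v-1)a_v$); and the alternating sum is then literally the subset expansion of $\prod_{i=1}^{n-1}(q^{c_i}-1)$, so no induction and no base case are needed. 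What each approach buys: the paper's recursive lemmas are reused verbatim when it treats the general chambers $\mathfrak{a}^m_{n-1}$ (\cref{lem4.5} splices together the rank-one recursions), whereas your argument is self-contained and explains structurally why the answer is a product — it \emph{is} a product, expanded over subsets. One point of reconciliation: your regrouping $q^{c_{n-1}}-1 = (q^n-1)[a_{n-1}]_{q^n}$ together with $q^{c_i}-1 = (q-1)[(i+1)a_i - ia_{i+1}]_q$ for $i \le n-2$ confirms the formula in the form of \cref{thm1.1} with $m=1$, i.e.\ with the $q$-integer product running over $1 \le i \le n-2$; the range ``$1 \le i < n$'' printed in the proposition is an off-by-one slip (there is no $a_n$), so your derivation matches the intended statement.
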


In order to apply \cref{thm4.2}  we must first determine $ W^1_{n-1}$.

 Let $w \in W^1_{n-1} $ , for which we have 
$w\Delta = \{\alpha_{w(i)w(i+1)}\}$. If we look at $ww_{n-1}$, where $w_{n-1}$ is the longest element we know $w_{n-1}(i) = n+1-i$. Thus $ww_{n-1}\Delta = \{\alpha_{w(n+1-i)w(n+1-(i+1))}\}$, which means that $ww_{n-1}\Delta = -w\Delta$ and in particular $\{|\alpha_{w(i)w(i+1)}|\} = \{|\alpha_{w(n+1-i)w(n+1-(i-1))}|\} $. Thus, we can conclude that $ww_{n-1} \in W^1_{n-1}$ .

\begin{lem}\label{lem3.2}
        \[W^1_{n-1} =  W^1_{n-2} \cup W^1_{n-2}  w_{n-1},\]where $W^1_{n-2}$ acts as a permutation of $1,\cdots,n-1$, leaving $n$ fixed
    \end{lem}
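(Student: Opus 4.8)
The plan is to translate the membership condition defining $W^1_{n-1}$ into a purely combinatorial statement about the one-line word $w(1)\cdots w(n)$, and then to read off the recursion. Recall that $w \in W^1_{n-1}$ precisely when the chamber $\mathfrak{a}_{n-1}^1$ lies in the cone $C(|w\alpha_1|,\dots,|w\alpha_{n-1}|)$. The generators $|w\alpha_i| = |e_{w(i)} - e_{w(i+1)}|$ are the positively oriented roots attached to the edges of the Hamiltonian path $w(1)-w(2)-\cdots-w(n)$ in the complete graph on $\{1,\dots,n\}$, so $C(|w\alpha_1|,\dots,|w\alpha_{n-1}|)$ is the cone spanned by those path edges, while $\mathfrak{a}_{n-1}^1 = C(\alpha_{1,2},\dots,\alpha_{1,n})$ is the cone spanned by the star at the vertex $1$. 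Since a convex cone contains a finitely generated cone iff it contains each generator, $w \in W^1_{n-1}$ iff every star root $e_1 - e_j$ lies in the path cone.

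First I would establish the combinatorial criterion. Because the path edges are linearly independent, each root decomposes uniquely as the telescoping sum along the unique sub-path joining its two endpoints, and membership in the path cone is equivalent to all coefficients of that telescoping sum being nonnegative. Writing $k = w^{-1}(1)$ for the position of the vertex $1$ and tracking the sign with which each positively oriented edge root enters the telescoping of $e_1 - e_j$, one finds that $e_1 - e_j$ has nonnegative coefficients for every $j$ iff $w(1) > w(2) > \cdots > w(k) = 1 < w(k+1) < \cdots < w(n)$. Thus $W^1_{n-1}$ is exactly the set of \emph{valley permutations}, those whose word strictly decreases down to $1$ and then strictly increases; the same criterion applied to $A_{n-2}$ identifies $W^1_{n-2}$ with the valley permutations of $\{1,\dots,n-1\}$. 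The step I expect to need the most care is precisely this sign bookkeeping: an edge $\{w(i),w(i+1)\}$ is oriented as $e_{w(i)} - e_{w(i+1)}$ or as its negative according to whether $w(i) < w(i+1)$, and one must check that this orientation matches the direction in which the telescoping sum traverses the path from $1$ toward $j$, for $j$ lying on either side of position $k$.

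With the valley characterization in hand the recursion is immediate. In any valley permutation the largest value $n$ sits at one of the two extreme positions, since the word attains its maximum at an endpoint. If $n$ occupies the last position then $w(n) = n$, and deleting it leaves a valley permutation of $\{1,\dots,n-1\}$; conversely appending $n$ to such a permutation yields a valley permutation fixing $n$. This identifies the valley permutations with $n$ at the right end with $W^1_{n-2}$ (acting on $1,\dots,n-1$ and fixing $n$). The remaining valley permutations are those with $n$ at the left end, and since right multiplication by the longest element $w_{n-1}$ reverses the word, $v \mapsto v w_{n-1}$ carries valley permutations fixing $n$ bijectively onto valley permutations with $n$ at the left end. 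This is consistent with the fact already recorded before the lemma, that $w_{n-1}$ preserves $W^1_{n-1}$ because reversing a path does not change its edge set and hence not its cone. As the two classes are disjoint and together exhaust all valley permutations, we conclude $W^1_{n-1} = W^1_{n-2} \cup W^1_{n-2} w_{n-1}$. I would close by verifying the base case $n=2$ directly, where both sides equal $\{e, s_1\}$.
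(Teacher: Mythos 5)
Your proposal is correct; the valley characterization does hold under the paper's conventions, and the recursion follows from it exactly as you describe. It shares the paper's opening reduction (membership in $W^1_{n-1}$ iff each star root $e_1-e_j$ lies in the cone of the path edges $|w\alpha_1|,\dots,|w\alpha_{n-1}|$), but the core of the argument is genuinely different. The paper never determines $W^1_{n-1}$ explicitly: it forces $n$ to an endpoint of the one-line word by a dimension count --- the two path edges incident to the vertex $n$ are the only generators with an $e_n$-component, and that component is negative, so neither can appear in a nonnegative expansion of the $n-2$ linearly independent star roots $e_1-e_2,\dots,e_1-e_{n-1}$, which would then have to lie in the cone of only $n-3$ remaining edges --- and then handles the two endpoint cases by restricting to $A_{n-2}$, with the reverse inclusion implicit in the closure property $w\in W^1_{n-1}\iff ww_{n-1}\in W^1_{n-1}$ recorded just before the lemma. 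You instead exploit the fact that the path edges form a basis, so cone membership is read off from the signs in the unique telescoping expansion, and this yields the full description of $W^1_{n-1}$ as the $2^{n-1}$ valley permutations; the recursion, both inclusions, and the disjointness of the two classes then drop out simultaneously. The cost of your route is the sign bookkeeping you flagged (which does check out, and is exactly where the orientation conventions matter); the payoff is an explicit description and count of $W^1_{n-1}$, which in particular makes transparent the two-element and four-element correspondences the paper invokes in its subsequent inductive lemmas. The paper's dimension count is shorter and avoids explicit combinatorics, but it delivers only the recursion itself.
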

    \begin{proof}
    Let $w \in W^{1}_{n-1}$. We observe that 
    \[\mathfrak({a}^{1}_{n-1})^{\circ} \subset C(|\alpha_{w(1),w(2)}|,\ldots,|\alpha_{w(n-1),w(n-2)}|) \iff \alpha_{1,r} \in C(|\alpha_{w(1), w(2)}|,\cdots,|\alpha_{w(n-2)w(n-1)}|), \] for all $1 \leq r \leq n-1$.
    
We claim that $w(n) = n \text{ or } 1$. If $1<w^{-1}(n) = i < n$,  any positive linear combination with the  root $\alpha_{w(w^{-1}(n)-1)n}$ or $ \alpha_{w(w^{-1}(n)+1)n}$ contains an $e_{n}$ component.  $w(\Delta)\setminus \{\alpha_{w(w^{-1}(n)-1)n},  \alpha_{w(w^{-1}(n)-1)n}\}$ is a set of size $n-3$ and it spans a  cone that contains $n-2$ linearly independent elements $\alpha_{1,2} ,\cdots \alpha_{1,n-2}$ which is not possible. So we have $w(n) = n$ or $w(1) = n$. 

If $w(n) = n$ ,  $w \in W_{n-2} \subset W_{n-1}$ and $C(w(\Delta_{n-1})\setminus \{\alpha_{w(n-1)n}\}) = C(w(\Delta_{n-2}))$ must contain $\mathfrak{a}_{n-2}^1$. Thus $w \in W_{n-2}^{1}$. \\

    If $w(1) = n$, we have $ww_{n-1}(n) = n$ and $ww_{n-1} \in W^{1}_n$ and so $ww_{n-1} \in W^1_{n-1}$.
\end{proof}

\begin{lem}
\label{lem4.3}
 Let   $\lambda = \sum^{n}_{i=1} a_{i}\alpha_{i} \in \mathfrak({a}^{1}_{n})^{\circ}$ and $\lambda'  = \sum^{n-1}_{i=1} a_{i}\alpha_{i} \in (\mathfrak{a}^{1}_{n-1})^{\circ} $. For $w \in W^{1}_{n-2}$,
  \[
    (-1)^{\epsilon_{n+1}(w)}e^{-\lambda(p_{w})} = (-1)^{\epsilon_n(w)+1}e^{-\lambda'(p_{w})}q^{- a_n}, \]
    \[ (-1)^{\epsilon_{n+1}(ww_{n})}e^{-\lambda(p_{ww_{n}})} = (-1)^{\epsilon_n(ww_{n-1})}e^{-\lambda'(p_{ww_{n-1}})}q^{a_n},\\
    \]
    \[
    (-1)^{\epsilon_{n+1}(ww_{n-1})}e^{-\lambda(p_{ww_{n-1}})} = (-1)^{\epsilon_{n}(ww_{n-1})+1}e^{-\lambda'(p_{ww_{n-1}})}q^{-na_n},
    \]
    \[
    (-1)^{\epsilon_{n+1}(ww_{n-1}w_{n})}e^{-\lambda(p_{ww_{n-1}w_{n}})} = (-1)^{\epsilon_{n}(ww_{n-1}w_{n-1})}e^{-\lambda'(p_{ww_{n-1}w_{n-1}})}q^{na_n}.
    \]
 
\end{lem}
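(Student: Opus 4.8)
The plan is to reduce all four identities to two elementary computations, carried out element by element: the power of $q$ produced by $e^{-\lambda(p_\bullet)}$, and the parity of the exponent $\epsilon_\bullet$. Both become transparent once $e^{-\lambda(p_w)}$ is written as a single power of $q$.

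First I would record a closed form for $e^{-\lambda(p_w)}$. From the description of the residual points in the proof of \cref{thm4.2} one has $e^{\alpha_i(p_w)} = q^{\,w^{-1}(i+1)-w^{-1}(i)}$, so writing $\lambda=\sum_i \lambda_i e_i$ in standard coordinates (with $\lambda_i = a_i - a_{i-1}$, $a_0 = 0$) and using $\sum_i \lambda_i = 0$, an Abel summation gives
\[
e^{-\lambda(p_w)} = q^{\sum_i \lambda_i\, w^{-1}(i)}.
\]
As a check, $w=\mathrm{id}$ returns $q^{-\sum_i a_i}$, the prefactor in \cref{thm1.1}. The rank-$n$ datum $\lambda = \sum_{i=1}^n a_i\alpha_i$ and the rank-$(n-1)$ datum $\lambda' = \sum_{i=1}^{n-1} a_i\alpha_i$ differ only through $\lambda = \lambda' + a_n\alpha_n$; in coordinates the two vectors agree except that $\lambda_n = \lambda'_n + a_n$ and there is a new final coordinate $\lambda_{n+1} = -a_n$. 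Since every $w\in W^1_{n-2}$ fixes both $n$ and $n+1$, all discrepancies between the two levels localize at the indices $n, n+1$.

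Next I would treat the four elements in turn, computing the inverse permutation on coordinates and differencing the rank-$n$ exponent against its rank-$(n-1)$ partner. The relevant twists act on $\{1,\dots,n+1\}$ by: $w$ (identity); $ww_{n-1}$, with $(ww_{n-1})^{-1}(i) = n+1 - w^{-1}(i)$ on $\{1,\dots,n\}$; $ww_n$, with $(ww_n)^{-1}(i) = n+2 - w^{-1}(i)$; and $ww_{n-1}w_n$, where $w_n w_{n-1}$ is the $(n+1)$-cycle $i\mapsto i+1$. Using $\sum_i \lambda_i = 0$ to annihilate the constant shifts $n+1$ and $n+2$, the exponent differences collapse to exactly $-a_n,\ a_n,\ -na_n,\ na_n$, matching the four powers of $q$ in the statement. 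This is the routine half.

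The sign bookkeeping is the delicate part and is where I expect the main obstacle. I would write $\epsilon_m(\cdot)$ as the number of ascents of the relevant permutation (positions $i$ with the image increasing from $i$ to $i+1$) and compare the rank-$n$ count with the rank-$(n-1)$ count. Because $w$ fixes $n$ and $n+1$, the extra simple roots at indices near $n$ are forced ascents, so the two counts differ by a fixed parity. I expect this difference to be \emph{odd}, giving a sign flip, for $w$ and for $ww_{n-1}$ (the first and third identities), and \emph{even}, giving no flip, for $ww_n$ and $ww_{n-1}w_n$ (the second and fourth). Concretely, under the reversal the interior ascents of $ww_{n-1}$ are exactly the descents of $w$, while under the cyclic shift the ascents of $ww_{n-1}w_n$ are exactly the ascents of $w$; checking these matchings cleanly, while keeping straight the one-index shift between $S_{n+1}$ and $S_n$ and the forced boundary contributions at $n, n+1$, is the only genuinely fiddly point. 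Combining the pinned-down parities with the $q$-powers from the previous step yields the four displayed identities.
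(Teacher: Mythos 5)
Your proposal is correct and is essentially the paper's own argument: the paper likewise proceeds by direct element-by-element computation, recording for each of the four elements the value $e^{\alpha_{n,n+1}(p_\bullet)} \in \{q,\, q^{-1},\, q^{n},\, q^{-n}\}$ (equivalent, since $\lambda = \lambda' + a_n\alpha_{n,n+1}$, to your exponent differences $-a_n,\ a_n,\ -na_n,\ na_n$) together with the ascent-count relations $\epsilon_{n+1}(w) = \epsilon_n(w)+1$, $\epsilon_{n+1}(ww_n) = \epsilon_n(ww_{n-1})$, $\epsilon_{n+1}(ww_{n-1}) = \epsilon_n(ww_{n-1})+1$, $\epsilon_{n+1}(ww_{n-1}w_n) = \epsilon_n(w)$. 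These are exactly the parities you predict (odd shift for $w$ and $ww_{n-1}$, even shift for $ww_n$ and $ww_{n-1}w_n$), and your closed form $e^{-\lambda(p_w)} = q^{\sum_i \lambda_i w^{-1}(i)}$ is just a repackaging of the paper's $t_{p_w}(\alpha_{ij}) = q^{w^{-1}(j)-w^{-1}(i)}$, so both halves of your computation check out.
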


\begin{proof}
Using \cref{lem3.2} , to each element in $w \in A_{n-2}^{1}$ we can associate 2 elements $w$ and $ww_{n-1}$ in $A_{n-1}^{1}$ and 4 elements $w$, $ww_{n}$, $ww_{n-1}$,  $ww_{n-1}w_{n}$ in $A_{n}^{1}$. For each element we have \[
    \epsilon_{n+1}(w) = \epsilon_{n}(w)+1 \text{, }e^{\alpha_{n,n+1}(p_{w})} = q, \]
   \[
    \epsilon_{n+1}(ww_{n}) = \epsilon_{n}(ww_{n-1})  \text{, }e^{\alpha_{n,n+1}(p_{ww{n}})} = q^{-1}, \]
    \[
    \epsilon_{n+1}(ww_{n-1}) = \epsilon_{n}(ww_{n-1})+1 \text{, }e^{\alpha_{n,n+1}(p_{ww_{n-1}})} = q^{n},
    \]
    \[
    \epsilon_{n+1}(ww_{n-1}w_n) = \epsilon_{n}(ww_{n-1}w_{n-1})  \text{, }e^{\alpha_{n,n+1}(p_{ww_{n-1}w_n})} = q^{-n}.
    \]The equalities follow from direct calculation. \mycomment{The corresponding terms in $\sum_{w \in W^{1}_{n}}{(-1)^{\epsilon(w)}}e^{-\lambda(p_{w})}$ are,
     
    \[
    (-1)^{\epsilon(w)}e^{-\lambda(p_{w})} = (-1)^{\epsilon'(w)+1}e^{-\lambda'(p_{w})}q^{- a_n}, \]
    \[ (-1)^{\epsilon(ww_{n})}e^{-\lambda(p_{ww_{n-1}})} = (-1)^{\epsilon'(ww_{n})}e^{-\lambda'(p_{ww{n}})}q^{a_n},\\
    \]
    \[
    (-1)^{\epsilon(ww_{n-1})}e^{-\lambda(p_{ww_{n-1}})} = (-1)^{\epsilon'(ww_{n-1})+1}e^{-\lambda'(p_{ww_{n-1}})}q^{-na_n},
    \]
    \[
    (-1)^{\epsilon(ww_{n-1}w_{n})}e^{-\lambda(p_{ww_{n-1}w_{n}})} = (-1)^{\epsilon'(ww_{n-1}w_{n-1})}e^{-\lambda'(p_{ww{n-1}w{n-1}})}q^{na_n},
    \]

    These $4$ terms make the following contribution to the sum.
    \[((-1)^{\epsilon'(w)}e^{-\lambda'(p_{w})}q^{-a_{n}}(q^{(n+1)a_n}-1) + (-1)^{\epsilon'(ww_{n-1})}e^{\lambda'(ww_{n})}q^{-na_{n}}(q^{(n+1)a_n}-1) \]
    \[
    = q^{-a_n}(q^{(n+1)a_n}-1)((-1)^{\epsilon'(w)}e^{-\lambda'(p_{w})} + (-1)^{\epsilon'(ww_{n})}e^{\lambda'(ww_{n})}q^{-(n-1)a_{n}})
    \]

    Summing over $w \in W^1_{n-2}$ we have the desired equality
}
\end{proof}

We can now prove \cref{prop4.4}.

\begin{proof}
    We will proceed via induction. The cases $n=2,3$ was verified by direct computation in Section 1 and Section 2, in combination with \cref{thm3.1}.
   
Suppose the theorem is true for $n$. We will show that the theorem is for $n+1$.
    Let  $\lambda = \sum^{n}_{i=1} a_{i}\beta_{i}$,
    $\lambda' = \sum^{n-1}_{i=1} a_{i}\beta_{i}$. Let $\lambda'' = \sum^{n-2}_{i=1} a_{i}\beta_{i}$.

    \begin{equation*}
    \begin{split}
    &c(\lambda,r^+(\eta))\\
    &=\sum_{w \in W^{1}_{n}}{(-1)^{\epsilon_{n+1}(w)}}e^{-\lambda(p_{w})}\Omega_{n+1}\\
    &=  \sum_{w \in W^{1}_{n-2}} q^{-a_n}(q^{(n+1)a_n}-1)((-1)^{\epsilon_{n}(w)}e^{-\lambda'(p_{w})} + (-1)^{\epsilon_{n}(ww_{n})}e^{\lambda'(p_{ww_{n-1}})}q^{-(n-1)a_{n}})\Omega_{n+1}, \\
    %&=  q^{-a_{n}}[a_n]_{q^{n+1}}(q^{na_{n-1}-(n-1)a_{n}}-1)\sum_{u\in A^{*}_{n-2}} q^{-a_{n-1}}((-1)^{\epsilon_{n-1}(u)}e^{-\lambda''(p_{u})} + (-1)^{\epsilon_{n-1}(uw_{n-2})}e^{\lambda''(p_{uw_{n-1}})}q^{-(n-2)a_{n-2}})\Omega_{n}\\
    &= q^{-a_n}[a_n]_{q^{n+1}}[na_{n-1}-(n-1)a_{n}]_q\frac{(q-1)^2(q^{n}-1)}{(q^{na_{n-1}}-1)}(\sum_{w \in W^{1}_{n-1}}{(-1)^{\epsilon_{n}(w)}}e^{-\lambda''(p_{w})}\Omega_{n}),
    \\
    %&= q^{-a_n}[a_n]_{q^{n+1}}[na_{n-1}-(n-1)a_{n}]_q\frac{(q-1)^2}{[a_{n-1}]_{q^n}}q^{-(\sum_{i=1}^{n-1}a_i)}(q-1)^{2(n-1)}[a_{n-1}]_{q^n} \Pi_{1 \leq i < n} [(i+1)a_i - ia_{i+1}]_{q}\\
   &= q^{-(\sum_{i=1}^{n}a_i)}(q-1)^{2n}[a_{n}]_{q^{n+1}} \Pi_{1 \leq i < n+1} [(i+1)a_i - ia_{i+1}]_{q}.\\
\end{split}
\end{equation*}
\end{proof}

\subsubsection{Trace formula for $\mathfrak{a}_{n-1}^{n-1}$}

\begin{prop}
\label{prop4.5}

 Let $H$ be the extended affine Hecke algebra of type $A_{n-1}$. If $\lambda = \sum_{i=1}^{n-1} a_i \alpha_{i} \in (\mathfrak{a}_{n-1}^{n-1})^{\circ}$,
    then \[\tau(\theta_{-\lambda}) = q^{-(\sum_{i=1}^{n-1}a_i)}(q-1)^{2(n-1)}[a_{1}]_{q^n} \Pi_{1 \leq i < n-1} [(i+1)a_{n-i} - ia_{n-(i+1)}]_{q}. \]
    
\end{prop}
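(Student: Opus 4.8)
The plan is to derive this from \cref{prop4.4} by invoking the order-two symmetry of the whole set-up under the diagram automorphism $\sigma := -w_{n-1}$, where $w_{n-1}$ is the longest element of $W(A_{n-1})$. Recall that $\sigma$ permutes the positive roots $\Phi^+_n$, acting on simple roots by $\alpha_i \mapsto \alpha_{n-i}$ (and in general by $\alpha_{i,j}\mapsto \alpha_{n+1-j,\,n+1-i}$), and that it carries $\mathfrak{a}_{n-1}^{m}$ to $\mathfrak{a}_{n-1}^{n-m}$. The first and only substantive step is to observe that $\eta$ is $\sigma$-invariant: since
\[
\eta(z)=\prod_{\alpha\in\Phi^+_n}\frac{(1-e^{\alpha(z)})^2}{(1-q^{-1}e^{\alpha(z)})(1-qe^{\alpha(z)})}
\]
depends only on the \emph{set} $\Phi^+_n$, and $\sigma$ merely permutes its factors, we have $\eta(\sigma z)=\eta(z)$. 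Feeding this into the expansion $r^+(\eta)=\sum_{\lambda}\tau(\theta_{-\lambda})\,e^{\lambda}$ supplied by \cref{thm2.2}, the invariance forces the matching of coefficients $c(\lambda,r^+(\eta))=c(\sigma\lambda,r^+(\eta))$, that is
\[
\tau(\theta_{-\lambda})=\tau(\theta_{-\sigma\lambda})\qquad\text{for all }\lambda\in P.
\]
Because $\sigma$ preserves $Q$ and $Q^+$, this is compatible with the vanishing statement of \cref{cor2.1.1}.

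With the symmetry in hand I would next transport the chamber. Writing $\lambda=\sum_{i=1}^{n-1}a_i\alpha_i\in(\mathfrak{a}_{n-1}^{n-1})^\circ$, so that $a_1<a_2<\cdots<a_{n-1}$, and using $\sigma\alpha_i=\alpha_{n-i}$, we get
\[
\sigma\lambda=\sum_{i=1}^{n-1}a_i\,\alpha_{n-i}=\sum_{j=1}^{n-1}b_j\,\alpha_j,\qquad b_j:=a_{n-j}.
\]
The inequalities reverse to $b_1>b_2>\cdots>b_{n-1}$, so $\sigma\lambda\in(\mathfrak{a}_{n-1}^{1})^\circ$, exactly the chamber handled by \cref{prop4.4}; this is the incarnation of the remark that $\sigma$ interchanges $\mathfrak{a}_{n-1}^{n-1}$ and $\mathfrak{a}_{n-1}^{1}$.

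Finally I would apply \cref{prop4.4} to $\sigma\lambda$ and substitute $b_j=a_{n-j}$ back. That proposition gives
\[
\tau(\theta_{-\sigma\lambda})=q^{-\sum_j b_j}(q-1)^{2(n-1)}\,[b_{n-1}]_{q^n}\prod_{1\le i<n-1}[(i+1)b_i-i\,b_{i+1}]_q,
\]
and the substitution yields $\sum_j b_j=\sum_i a_i$, $b_{n-1}=a_1$, and $(i+1)b_i-i\,b_{i+1}=(i+1)a_{n-i}-i\,a_{n-(i+1)}$. Hence the right-hand side is precisely the product claimed in the Proposition, and by the coefficient identity $\tau(\theta_{-\lambda})=\tau(\theta_{-\sigma\lambda})$ from the first paragraph this equals $\tau(\theta_{-\lambda})$, as required.

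The genuinely substantive point is the invariance $\tau(\theta_{-\lambda})=\tau(\theta_{-\sigma\lambda})$; everything after it is reindexing. The one place I expect to have to be careful is keeping $\sigma$ (acting on $V^*$) distinct from its transpose (acting on the variable $z\in V$), so that the permutation of coefficients reads $\lambda\mapsto\sigma\lambda$ and not its inverse; as $\sigma$ is an involution this potential ambiguity is harmless. A less economical alternative would bypass the symmetry entirely and rerun the inductive engine of \cref{lem3.2} and \cref{lem4.3} directly in $\mathfrak{a}_{n-1}^{n-1}$ via \cref{thm4.2}, identifying the contributing permutations $W_{n-1}^{n-1}$ and summing their residues by hand; the symmetry argument makes this unnecessary.
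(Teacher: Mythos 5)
Your proposal is correct and follows essentially the same route as the paper's own proof: both exploit the invariance of $\eta$ under the involution $-w_{n-1}$ (which merely permutes $\Phi^+_n$) to obtain $c(\lambda,r^+(\eta)) = c(-w_{n-1}\lambda, r^+(\eta))$, and then use that $-w_{n-1}$ carries $(\mathfrak{a}_{n-1}^{n-1})^{\circ}$ into $(\mathfrak{a}_{n-1}^{1})^{\circ}$ so that \cref{prop4.4} applies. Your version simply writes out the reindexing $b_j = a_{n-j}$ explicitly, which the paper leaves implicit.
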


Using \cref{thm4.2}, for  $\lambda \in \mathfrak{a}_{n-1}^{n-1}$ we can write \[ c(\lambda, r^{+}(\eta)) =  \sum_{w \in W^{n-1}_{n-1}}(-1)^{\epsilon_n(w)}e^{-\lambda(p_{w})}\Omega_{n}.\]

 If we embed $W(A_{n-2})$ inside $W(A_{n-1})$ as permutations of $2,\cdots,n$, we can inductively describe $W_{n-1}^{n-1}$.

 \begin{lem}
 \label{lem4.4}
   \[ W_{n-1}^{n-1} = W_{n-2}^{n-2}  \cup W_{n-2}^{n-2}w_{n-1}.\]
 \end{lem}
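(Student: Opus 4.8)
The plan is to deduce \cref{lem4.4} from \cref{lem3.2} by transporting the latter through the involution $\psi := -w_{n-1}$ recorded just before this subsection, which preserves $\Phi_n^+$ and sends $\mathfrak{a}_{n-1}^m$ to $\mathfrak{a}_{n-1}^{n-m}$. First I would establish the bijection $\Psi\colon W^1_{n-1}\to W^{n-1}_{n-1}$ given by $\Psi(w)=w_{n-1}w$. To see it is well defined, note that $\psi$ maps $\Phi_n^+$ into $\Phi_n^+$, so for each $i$ the positive root $\psi(|w\alpha_i|)$ equals $|w_{n-1}w\alpha_i|$ (the sign introduced by $-w_{n-1}$ is absorbed by $|\cdot|$); hence $\psi\bigl(C(|w\alpha_1|,\dots,|w\alpha_{n-1}|)\bigr)=C(|w_{n-1}w\alpha_1|,\dots,|w_{n-1}w\alpha_{n-1}|)$. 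Applying $\psi$ to the containment $\mathfrak{a}_{n-1}^1\subset C(|w\alpha_1|,\dots,|w\alpha_{n-1}|)$ and using $\psi(\mathfrak{a}_{n-1}^1)=\mathfrak{a}_{n-1}^{n-1}$ converts the condition defining $w\in W^1_{n-1}$ into exactly the one defining $w_{n-1}w\in W^{n-1}_{n-1}$, so $\Psi$ is a bijection.

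Granting this, \cref{lem3.2} yields
\[
W^{n-1}_{n-1}=w_{n-1}W^1_{n-1}=w_{n-1}W^1_{n-2}\;\cup\;w_{n-1}W^1_{n-2}w_{n-1},
\]
where $W^1_{n-2}$ is the copy fixing $n$. It then remains to identify the two pieces, which comes down to the single conjugation identity
\[
w_{n-1}\,W^1_{n-2}\,w_{n-1}=W^{n-2}_{n-2},
\]
with $W^{n-2}_{n-2}$ the copy fixing $1$. Indeed, right-multiplying by $w_{n-1}$ and using $w_{n-1}^2=e$ gives $w_{n-1}W^1_{n-2}=W^{n-2}_{n-2}w_{n-1}$, so the displayed union becomes $W^{n-2}_{n-2}w_{n-1}\cup W^{n-2}_{n-2}$, which is the assertion. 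The union is automatically disjoint, since \cref{lem3.2} is and $\Psi$ is a bijection.

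The main work, and the step I expect to be the most delicate, is this conjugation identity. Conjugation by $w_{n-1}$ acts on one-line notation by $i\mapsto n+1-i$ and thus carries $S_{n-1}$ (fixing $n$) isomorphically onto $S_{n-1}$ (fixing $1$); on roots it sends $\alpha_i\mapsto -\alpha_{n-i}$, so the simple system $\Delta''=\{\alpha_2,\dots,\alpha_{n-1}\}$ of the subsystem on $\{2,\dots,n\}$ is carried to $-\Delta'$ with $\Delta'=\{\alpha_1,\dots,\alpha_{n-2}\}$. Consequently, for $\sigma\in S_{n-1}$ fixing $n$ and $\tau=w_{n-1}\sigma w_{n-1}$, one checks $\{|\tau\delta|:\delta\in\Delta''\}=\{|w_{n-1}\sigma\gamma|:\gamma\in\Delta'\}=\psi\bigl(\{|\sigma\gamma|:\gamma\in\Delta'\}\bigr)$. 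Since $\psi$ also sends $\mathfrak{a}^1_{n-2}$ to $\mathfrak{a}^{n-2}_{n-2}$, applying $\psi$ to the containment defining $\sigma\in W^1_{n-2}$ produces precisely the containment defining $\tau\in W^{n-2}_{n-2}$. The only real subtlety is the sign bookkeeping: the chamber-flipping involution is $\psi=-w_{n-1}$ while the group element available for conjugation is $w_{n-1}$, and these agree as automorphisms of $W(A_{n-1})$ only because the scalar $-1$ cancels, $(-w_{n-1})\sigma(-w_{n-1})=w_{n-1}\sigma w_{n-1}$; tracking how the absolute values $|\cdot|$ absorb the stray signs is what makes the computation go through.
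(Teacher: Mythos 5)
Your proposal is correct and follows essentially the same route as the paper's proof: the paper likewise converts $w \in W^{n-1}_{n-1}$ into $w_{n-1}w \in W^{1}_{n-1}$ via the involution $-w_{n-1}$, invokes \cref{lem3.2}, and concludes with the conjugation identity $w_{n-1}W^{1}_{n-2}w_{n-1} = W^{n-2}_{n-2}$. The extra care you take with the sign bookkeeping and the well-definedness of $\Psi$ fills in details the paper leaves implicit, but it is not a difference in method.
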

  
 \begin{proof}
 \begin{equation*}
\begin{split}
     w \in W_{n-1}^{n-1}  &\iff \mathfrak{a}_{n-1}^n \subset C(|\alpha_{w(1),w(2)}|,\ldots|\alpha_{w(n-1),w(n)}|)\\
 &\iff \mathfrak{a}^{1}_{n-1} \subset C(|\alpha_{w_{n}w(1),w_{n}w(2)}|,\ldots |\alpha_{w_{n-1}w(n-1),w_{n-1}w(n)} |)\\
 & \iff w_{n-1}w \in W_{n-2}^1 \cup W_{n-2}^1w_{n-1}\\
 & \iff w \in w_{n-1}W_{n-2}^1 \cup w_{n-1}W_{n-2}^1w_{n-1}.
 \end{split}
 \end{equation*} 
 Conjugation by $w_{n-1}$ maps permutations of $1, \cdots ,n-1$ to permutations of $n, \cdots ,2$ . Thus, we find that \[w_{n-1}W_{n-2}^1w_{n-1} = W_{n-2}^{n-2}.\] Hence,
 \[A_{n-1}^{n-1} = A_{n-2}^{n-2}w_{n-1} \cup A_{n-2}^{n-2}. \]
 \end{proof}.

 We can prove \cref{prop4.5}.
 
 \begin{proof}
 We can write, \[\eta(t) = \Pi_{\alpha \in \Phi^{+}} \frac{(1-t(\alpha))^2}{(1-q^{-1}t(\alpha))(1-qt(\alpha))} = \Pi_{\alpha \in \Phi^{+}} \frac{(1-t(-w_{n-1}(\alpha))^2}{(1-q^{-1}t(-w_{n-1}(\alpha))(1-qt(-w_{n-1}(\alpha))}. \]
 
 Comparing coefficients, we get $c(-w_{n-1}\lambda, r^+(\eta)) = c(\lambda, r^{+}(\eta))$.

 If $\lambda \in \mathfrak{a}_{n-1}^{n-1}$,  $-w_{n-1}\lambda \in \mathfrak{a}_{n-1}^{1}$.  Hence, the formula follows from 
 \cref{prop4.4}.
     
 \end{proof}

 We will need an analogue of \cref{lem4.3} to prove the general case.

\begin{lem}
\label{lem4.5}
 Let   $\lambda = \sum^{n}_{i=1} a_{i}\alpha_{i} \in (\mathfrak{a}^n_{n})^{\circ}$ and $\lambda'  = \sum^{n}_{i=2} a_{i}\alpha_{i} \in (\mathfrak{a}^{n-1}_{n-1})^{\circ} $ Then 
 \[
    (-1)^{\epsilon_{n+1}(w)}e^{-\lambda(p_{w})} = (-1)^{\epsilon_n(w)+1}e^{-\lambda'(p_{w})}q^{- a_1}, \]
    \[ (-1)^{\epsilon_{n+1}(ww_{n})}e^{-\lambda(p_{ww_{n}})} = (-1)^{\epsilon_{n}(ww_{n-1})}e^{-\lambda'(p_{ww_{n-1}})}q^{a_1},\\
    \]
    \[
    (-1)^{\epsilon_{n+1}(ww_{n-1})}e^{-\lambda(p_{ww_{n-1}})} = (-1)^{\epsilon_{n}(ww_{n-1})+1}e^{-\lambda'(p_{ww_{n-1}})}q^{-na_1},
    \]
    \[
    (-1)^{\epsilon(ww_{n-1}w_{n})}e^{-\lambda(p_{ww_{n-1}w_{n}})} = (-1)^{\epsilon'(ww_{n-1}w_{n-1})}e^{-\lambda'(p_{ww_{n-1}w_{n-1}})}q^{na_1}.
    \]
\end{lem}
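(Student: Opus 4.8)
The plan is to deduce \cref{lem4.5} from \cref{lem4.3} using the order-reversing involution $\iota=-w_n$, where $w_n$ is the longest element of $W(A_n)$; on roots it acts by $\alpha_{i,j}\mapsto\alpha_{n+2-j,\,n+2-i}$, so $\alpha_i\mapsto\alpha_{n+1-i}$, it is implemented on $W(A_n)$ by conjugation $x\mapsto w_nxw_n$, it carries the chamber $\mathfrak{a}^n_n$ to $\mathfrak{a}^1_n$, and it interchanges the rank-$(n-1)$ parabolic fixing the index $1$ (on which $\lambda'$ lives) with the one fixing the index $n+1$ (the parabolic of \cref{lem4.3}). Setting $\hat\lambda=\iota\lambda=\sum_j a_{n+1-j}\alpha_j$, the hypothesis $a_1\le\cdots\le a_n$ becomes $\hat a_1\ge\cdots\ge\hat a_n$ with $\hat a_n=a_1$, so $\hat\lambda$ and its truncation $\hat\lambda'$ are exactly the weights to which \cref{lem4.3} applies.

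First I would record that the four-element family of \cref{lem4.4} is flip-equivariant. Since $w_n$ is self-conjugate and $\iota$ sends the subsystem longest element $w_{n-1}$ to the longest element of the opposite subsystem, conjugation by $w_n$ maps the family $\{w,\,ww_n,\,ww_{n-1},\,ww_{n-1}w_n\}\subset W^n_n$ attached to $w$ onto the family $\{\hat w,\,\hat w w_n,\,\hat w w_{n-1},\,\hat w w_{n-1}w_n\}\subset W^1_n$ attached to $\hat w=w_nww_n$, so the four relations are matched termwise.

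The heart of the argument is a residual-point-level invariance, which I would prove at both ranks: for every relevant $v$,
\[
(-1)^{\epsilon_{n+1}(v)}e^{-\lambda(p_v)} = (-1)^{\epsilon_{n+1}(w_nvw_n)}e^{-\hat\lambda(p_{w_nvw_n})},
\]
together with the analogous statement for $\epsilon_n$, $\lambda'$ and the $A_{n-1}$ residual points. The sign part follows from $w_n\alpha_i=-\alpha_{n+1-i}$, which gives $\epsilon_{n+1}(w_nvw_n)=\epsilon_{n+1}(v)$. The exponential part follows from the characterization $\alpha_i(h)=2$, which forces $h$ to be $\iota$-fixed, i.e. $w_nh=-h$; combined with $e^{\alpha(p_v)}=q^{(v^{-1}\alpha)(h)/2}$ this yields $e^{-\hat\lambda(p_{w_nvw_n})}=e^{-\lambda(p_v)}$ after a short manipulation. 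Granting these invariances and $\hat a_n=a_1$, each of the four identities of \cref{lem4.3} (applied to $\hat w$) transports verbatim to the corresponding identity of \cref{lem4.5} (for $w$).

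The main obstacle is the $A_{n-1}$ side of the invariance: the two parabolics carry different distinguished coweights, and one must check that the global flip sends the $\rho^\vee$-type element of the parabolic fixing $n+1$ to that of the parabolic fixing $1$ up to the expected sign, while keeping the powers $q^{\pm 1},q^{\pm n}$ and the $\epsilon$-parities consistent across the two ranks. As a cross-check, and as an alternative self-contained route paralleling the proof of \cref{lem4.3}, one can instead compute directly from \cref{lem4.4} that on the four family members $e^{\alpha_{1,2}(p_\bullet)}$ equals $q,\,q^{-1},\,q^{n},\,q^{-n}$ and that the $\epsilon_{n+1}$ values exceed the matching $\epsilon_n$ values by $1,\,0,\,1,\,0$, and then combine these with $\lambda=\lambda'+a_1\alpha_{1,2}$.
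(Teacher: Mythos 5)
Your proposal is correct, but your primary route is genuinely different from the paper's proof of this lemma. The paper proves it by direct computation, exactly as in your fallback paragraph: using the four-element families $w,\,ww_{n},\,ww_{n-1},\,ww_{n-1}w_{n}$ coming from \cref{lem4.4}, it records that $e^{\alpha_{1,2}(p_{\bullet})}$ takes the values $q,\,q^{-1},\,q^{n},\,q^{-n}$ and that $\epsilon_{n+1}$ exceeds the matching $\epsilon_{n}$ by $1,\,0,\,1,\,0$, and then the four identities follow from $\lambda=\lambda'+a_{1}\alpha_{1,2}$; so your ``cross-check'' \emph{is} the paper's argument. Your main route instead transports \cref{lem4.3} through the involution $\iota=-w_{n}$, implemented by conjugation $x\mapsto w_{n}xw_{n}$, and all the ingredients you need do check out: $\epsilon_{n+1}(w_{n}vw_{n})=\epsilon_{n+1}(v)$ since $-w_{n}$ preserves positivity; $h$ satisfies $w_{n}h=-h$ because $\alpha_{i}(h)=2$ for all $i$ means $h=2\rho^{\vee}$; and the parabolic-level obstacle you flag resolves affirmatively, since the same computation shows $-w_{n}$ carries the $h$-element of the parabolic fixing $n+1$ to that of the parabolic fixing $1$, so the exponentials $e^{-\lambda'(p_{w})}$ and the $\epsilon_{n}$-parities are also flip-invariant. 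It is worth noting that your symmetry argument is the same philosophy the paper itself uses one level up: in the proof of \cref{prop4.5} the identity $c(-w_{n-1}\lambda,r^{+}(\eta))=c(\lambda,r^{+}(\eta))$ transports \cref{prop4.4} to the opposite chamber. You apply that philosophy at the finer level of individual residual-point contributions rather than to the summed generating function. What your route buys is a structural explanation of why \cref{lem4.5} is a mirror image of \cref{lem4.3} with no recomputation of residue data; what the paper's route buys is brevity, since the computational template of \cref{lem4.3} is already on the page and only four values need to be listed. Either argument suffices.
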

 
\begin{proof}
     Given $w \in W_{n-2}^{n-2}$, we can associate $w, ww_{n-1} \in W_{n-1}^{n-1}$ and \[w, ww_{n}, ww_{n-1}, ww_{n-1}w_{n} \in W_{n}^{n}.\] Analogous to \cref{lem4.3}, the lemma follows on determining  the value of $\epsilon_{n+1}$ and $e^{\alpha_1}$ for the elements.
      \[
    \epsilon_{n+1}(w) = \epsilon_{n}(w)+1 \text{, }e^{\alpha_{1,2}(p_{w})} = q, \]
   \[
    \epsilon_{n+1}(ww_{n}) = \epsilon_{n}(ww_{n-1})  \text{, }e^{\alpha_{1,2}(p_{ww{n}})} = q^{-1}, \]
    \[
    \epsilon_{n+1}(ww_{n-1}) = \epsilon_{n}(ww_{n-1})+1 \text{, }e^{\alpha_{1,2}(p_{ww_{n-1}})} = q^{n},
    \]
    \[
    \epsilon_{n+1}(ww_{n-1}w_n) = \epsilon_{n}(ww_{n-1}w_{n-1})  \text{, }e^{\alpha_{1,2}(p_{ww_{n-1}w_n})} = q^{-n}.
    \]

\end{proof}

 \subsubsection{Trace formula for $\mathfrak{a}_{n-1}^m$}

We can now establish \cref{thm1.1} in full generality.  We will think of an element in $W(A_{n-m-1}) \times W( A_{m-1})$ as a pair of permutations acting on $1, \cdots, n-m$ and $n-m+1, \cdots ,n$ respectively. 
 $\mathfrak{a}_{n-1}^m =\mathfrak{a}_{n-m-1}^1 +  \mathfrak{a}_{m-1}^{m-1} +\RR^+ \alpha_{1,n} $

 \begin{lem}
 \label{lem4.5}
     \[ W_{n-1}^{m} = W_{n-m-1}^1 \times  W_{m-1}^{m-1} \cup (W_{n-m-1}^1 \times  W_{m-1}^{m-1})w_{n-1}\]
 \end{lem}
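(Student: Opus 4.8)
The plan is to prove the two inclusions separately, after recording one principle that makes the whole argument geometric. Writing $P_w$ for the Hamiltonian path $w(1)-w(2)-\cdots-w(n)$ in $K_n$, the generators $|w\alpha_i|$ of the cone in the definition of $W(\mathfrak{a}_{n-1}^m,A_{n-1})$ are exactly the edges of $P_w$, each carrying the orientation of the positive root $e_a-e_b$ ($a<b$) on its endpoints. Since the $|w\alpha_i|$ form a basis, $C(|w\alpha_i|)$ is simplicial, and $w\in W_{n-1}^m$ is equivalent to the statement that every extreme ray of $\mathfrak{a}_{n-1}^m$ — namely the roots $\alpha_{1,r}$ ($2\le r\le n-m$), the roots $\alpha_{s,n}$ ($n-m+1\le s\le n-1$), and the long root $\alpha_{1,n}$ — is realizable as a \emph{non-negative} flow along the spanning tree $P_w$. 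I will also use the closure fact that $w\in W_{n-1}^m\iff ww_{n-1}\in W_{n-1}^m$: since $w_{n-1}\alpha_i=-\alpha_{n-i}$, one has $|ww_{n-1}\alpha_i|=|w\alpha_{n-i}|$, so $w$ and $ww_{n-1}$ generate the same cone. Thus it suffices to check $S:=W_{n-m-1}^1\times W_{m-1}^{m-1}\subseteq W_{n-1}^m$ and that every pair $\{w,ww_{n-1}\}$ meets $S$.

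For the inclusion $S\subseteq W_{n-1}^m$, I would take a block-diagonal $w=(w_1,w_2)$, whose path $P_w$ is a sub-path on $B_1:=\{1,\dots,n-m\}$ (ordered by $w_1$), a single bridge edge, and a sub-path on $B_2:=\{n-m+1,\dots,n\}$ (ordered by $w_2$). A ray $\alpha_{1,r}$ with $r\le n-m$ has both endpoints in $B_1$ and zero net divisor there, so its flow stays inside the $B_1$ sub-path and is non-negative precisely because $w_1\in W_{n-m-1}^1$ makes $\alpha_{1,r}\in C(|w_1\alpha_i|)$; symmetrically for the $\alpha_{s,n}$ using $w_2\in W_{m-1}^{m-1}$. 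For the long root $\alpha_{1,n}$ the flow splits into a within-$B_1$ piece realizing the ray $\alpha_{1,w_1(n-m)}$, one unit across the bridge in the $B_1\to B_2$ (hence positive) direction, and a within-$B_2$ piece realizing $\alpha_{w_2(n-m+1),n}$; both pieces are extreme rays of the respective sub-chambers and therefore non-negative. Hence every extreme ray lies in $C(|w\alpha_i|)$, so $w\in W_{n-1}^m$, and closure under $w_{n-1}$ gives $Sw_{n-1}\subseteq W_{n-1}^m$.

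For the reverse inclusion the crux is to show that any $w\in W_{n-1}^m$ respects the partition $\{B_1,B_2\}$ up to the reversal $w_{n-1}$. The key observation is an orientation obstruction: every bridge edge of $P_w$ joins $a\in B_1$ to $b\in B_2$ with $a<b$, so its positive root $e_a-e_b$ points from $B_1$ to $B_2$. Now realize a within-$B_1$ ray $\alpha_{1,r}$; on the tree $P_w$ its flow is the unique unit pushed from vertex $1$ to vertex $r$ along the path. If $1$ and $r$ lay in two different maximal $B_1$-runs of $P_w$, this path would re-enter $B_1$ from $B_2$ at some bridge, traversing it against its $B_1\to B_2$ orientation and thus forcing a negative coefficient — contradicting $w\in W_{n-1}^m$. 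Therefore $B_1$ is a single contiguous segment of $P_w$; the same argument applied to the rays $\alpha_{s,n}$ (all sinking at $n\in B_2$) shows $B_2$ is contiguous. Consequently $P_w$ is a $B_1$-run followed by a $B_2$-run or vice versa, so $w$ is block-diagonal or block-anti-diagonal, the latter becoming block-diagonal after right multiplication by $w_{n-1}$. Restricting the (bridge-free) flows of the $\alpha_{1,r}$ and $\alpha_{s,n}$ to each block then shows $w_1\in W_{n-m-1}^1$ and $w_2\in W_{m-1}^{m-1}$, placing $w$ in $S\cup Sw_{n-1}$.

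The hard part will be exactly this block-contiguity step: the dimension count that settles the extreme cases $m=1$ and $m=n-1$ (\cref{lem3.2} and \cref{lem4.4}) no longer forces a single bridge when both blocks are large, so it must be replaced by the orientation argument above, which rests on the fact that under the convention $\Phi^+=\{e_j-e_i:j<i\}$ every cross-block positive root points out of the lower-indexed block $B_1$. A secondary point to verify carefully is that the flows realizing the within-block rays genuinely use no bridge edge, so that the restricted cone condition coincides with the defining conditions of $W_{n-m-1}^1$ and $W_{m-1}^{m-1}$; this is what lets the argument reduce cleanly to the two base lemmas.
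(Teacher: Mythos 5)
Your proof is correct, and while it shares the paper's overall skeleton --- show that any $w \in W_{n-1}^m$ must carry the blocks $B_1=\{1,\dots,n-m\}$ and $B_2=\{n-m+1,\dots,n\}$ to contiguous segments of the path $w(1),\dots,w(n)$, so that $w$ or $ww_{n-1}$ is block-diagonal, and then restrict to the blocks --- the mechanism at the decisive step is genuinely different. The paper forces block-contiguity by a dimension count on the face $\mathfrak{a}_{n-m-1}^1+\mathfrak{a}_{m-1}^{m-1}$: every cross-block root contains $\alpha_{n-m}$ in its support while the face does not, so with two bridge edges present the $(n-2)$-dimensional face would have to lie in the cone spanned by at most $n-3$ non-bridge edges, which is impossible. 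You instead use a sign obstruction on a single extreme ray: the unique simplicial expansion of $\alpha_{1,r}$ is the tree flow from $1$ to $r$, and re-entering $B_1$ forces some bridge to be traversed against the $B_1\to B_2$ orientation of its positive root, producing a negative coefficient. Your version buys two things. First, it handles all bridge configurations uniformly: the paper's argument, as literally written, only excludes the pattern in which $w(i-1)$ and $w(i+1)$ share a colour opposite to that of $w(i)$ (two adjacent bridges), and excluding that pattern alone does not force a monotone colouring (the colour sequence $A\,A\,B\,B\,A\,A$ has no such vertex); one must rerun the same dimension count on an arbitrary, possibly non-adjacent, pair of bridges to close the argument, a step your orientation argument performs automatically. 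Second, you prove both containments: the paper establishes only $W_{n-1}^m \subseteq (W_{n-m-1}^1\times W_{m-1}^{m-1})\cup(W_{n-m-1}^1\times W_{m-1}^{m-1})w_{n-1}$, leaving the reverse inclusion --- your block-diagonal analysis of the within-block rays, the three-piece decomposition of the flow realizing $\alpha_{1,n}$, and the invariance of the cone under $w\mapsto ww_{n-1}$ --- implicit. What the paper's route buys in exchange is brevity and stylistic consistency with the cone-dimension arguments of \cref{lem3.2} and \cref{lem4.4}; your flow formulation has the added merit of matching the Kostant-partition/integer-flow picture from \cref{section2}.
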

 \begin{proof}
     The partition $1,\cdots,n-m$ and $n-m+1, \cdots ,n$ determines a two-colouring of $1,\cdots,n$.
     Suppose $w \in W_{n-1}^{m}$. 
     If  we can find $i-1, i, i+1$ for which $w(i-1),w(i+1)$ have the same colour but $w(i)$ has a different colour, $\alpha_{n-m,n-m+1}$ will be in the support of $\alpha_{|w(i-1),w(i)|}$ and $\alpha_{|w(i),w(i+1)|}$.  We must then have \[\mathfrak{a}_{n-m-1}^1 +  \mathfrak{a}_{m-1}^{m-1} \subset C(|w\alpha_{1}| \cdots |w\alpha_{i-2}|,|w\alpha_{i+1}| \cdots|w\alpha_{n-1}|).  \] But the span of $\mathfrak{a}_{n-m-1}^1 +  \mathfrak{a}_{m-1}^{m-1} $ is a space of dimension $n-2$, while the cone on the right spans a space of dimension $n-3$. Thus the colour of $w(1)$ will determine the entire colouring.
     If $1 \leq w(1) \leq n-m$ ,  \[1 \leq w(1),\cdots,w(n-m) \leq n-m \text{ and }n-m +1 \leq  w(n-m+1),\cdots,w(n) \leq n \]. We must have $w \in  W_{n-m-1}^1 \times  W_{m-1}^{m-1}$. 
     
     Else  $n-m+1 \leq w(1),\cdots,w(m) \leq n \text{ and }1 \leq  w(m+1),\cdots,w(n) \leq n-m $, giving us $ww_{n-1} \in W_{n-m-1}^1 \times  W_{m-1}^{m-1} $.
\end{proof}

We can now prove \cref{thm1.1} in full generality

\begin{proof} 

Using \cref{lem3.2}, to each element in $w \in W_{n-m-2}^{1}$ we can associate 2 elements $w$ and $ww_{n-m-1}$ in $W_{n-m-1}^{1}$.
Similarly, using \cref{lem4.3} each element in $w' \in W_{m-2}^{m-2}$ we can associate 2 elements $w'$ and $w'w_{m-1}$ in $W_{m-1}^{m-1}$.  Using the 4 elements in  $W^{1}_{n-m-2}\times W_{m-2}^{m-2}$ and \cref{lem4.4}, we get 8 elements of $W_{n-1}^m$ corresponding to $(w,w')$.

Summing over $(w,w') \in W^{1}_{n-m-2}\times W_{m-2}^{m-2} $,  we have
    \begin{equation*}
    \begin{split}    
        & \sum_{w \in W^{m}_{n-1}}{(-1)^{\epsilon(w)}}e^{-\lambda(p_{w})} \notag \\
    &= q^{-a_{n-m}}(q^{na_{n-m}}-1)\\&(\sum_{w \in W^{1}_{n-m-2}} ((-1)^{\epsilon'(w)}e^{-\lambda'(p_{w})} + (-1)^{\epsilon'(ww_{n-m-1})}e^{\lambda'(ww_{n-m-1})}q^{-(n-m-2)a_{n-m-1}}) )\\&(\sum_{w' \in W^{m-2}_{m-2}}((-1)^{\epsilon''(w')}e^{-\lambda'(p_{w})} + (-1)^{\epsilon''(w'w_{m-1})}e^{\lambda'(ww_{n-m-1})}q^{-(m-2)a_{n-m-1}}) )\\
    &=q^{-\sum_{i=1}^{n-1} a_{i}}(q^{na_{n-m}}-1)\prod_{i=1}^{n-m-1}(q^{(i+1)a_{i} - ia_{i+1}}-1)\prod_{j=1}^{m-1}(q^{(j+1)a_{n-j}-ja_{n-j-1}}-1).
    \end{split}
    \end{equation*}

\end{proof}

\section{Concluding Remarks}
\label{section5}
 In this work, we are able to prove \cref{thm1.1}, through direct analysis of  the generating function of the trace. The special cases discussed  in \cite{wil}, crucially used calculations from \cite{Ha}, using Tesler matrices, to find the trace of special elements. It would be interesting to see what our results mean for the combinatorics of Tesler matrices. In the language of Tesler matrices, our results correspond to sums of matrices with possibly negative hook sums. In \cite{AGH+}, the emphasis on positive hook sums follows from the need for a minimum number of non zero entries in a Tesler matrix. In our setting, if the corresponding weight is in a big chamber of $C(A_{n-1}^{+})$, it cannot be written as a sum of less than $n-1$ roots. Thus, fixing our weight to be a regular element of the cone $C(A_{n-1}^{+}) $ is a more general condition that ensures the resulting weighted sums are still interesting. 
 
 For general affine Hecke algebras,\cref{thm3.1} reduces calculating the trace to choosing a big chamber and calculating the residue at each residual point.\cref{thm2.3} describes the residue points in a general combinatorial manner. On the other hand, we do not know a general description of the big chambers of the cone generated by positive roots, even in type $A$. However, we can find chambers similar to the ones considered in this paper in other classical types.

The use of residual points in this paper hints at deeper connections with \cite{op2}. The main result in \cite{op2} is a Plancherel formula for the trace, where  the trace is decomposed into a sum of integrals over more general ``residual cosets''. Using the Plancherel theorem
might help in finding the trace of elements beyond the translations.

\bibliographystyle{plain}
\def\noopsort#1{}
%\begin{thebibliography}{11}

\end{document}